%
\documentclass{amsart}
\usepackage{amssymb,latexsym}
\usepackage{amsmath}
\usepackage[dvipdfm]{graphicx}
 \usepackage{color}
\usepackage{enumerate}
\newenvironment{enumeratei}{\begin{enumerate}[\upshape (i)]}{\end{enumerate}}
\numberwithin{equation}{section}
\theoremstyle{plain}
 \newtheorem{theorem}{Theorem}[section]
 \newtheorem{lemma}[theorem]{Lemma}
 \newtheorem{proposition}[theorem]{Proposition}
 \newtheorem{corollary}[theorem]{Corollary}
\theoremstyle{definition}
 \newtheorem{definition}[theorem]{Definition}
 \newtheorem{remark}[theorem]{Remark}

%

\newcommand \degx [2] {\textup{deg}_{#1}(#2)}
\newcommand \notmid {\mathrel{
\not\mathord{\kern -0.5pt \vert}}}
\newcommand \snotmid {\mathrel{
\not\mathord{\kern 2.2pt \vert}}}
\newcommand \binomial [2] {{{#1}\choose {#2}}}
\newcommand \vonal {\noalign{\hrule}}
\newcommand \modu[1] {\,\,\,(\text{mod }#1)}

\newcommand \url [1] {{\tt{#1}}}

\newcommand \tuple [1] {\langle #1\rangle}
\newcommand \pair [2] {\tuple{#1,#2}}
\renewcommand\phi{\varphi}
\newcommand \wt[1] {\widetilde{#1}}
\newcommand \wh[1] {\widehat{#1}}
\newcommand \qclo[1] {{#1}{}^{\scriptscriptstyle{{\pmb\square}}}}
\renewcommand\emptyset{\varnothing}

\newcommand \tbf [1] {\textbf{#1}} 
\newcommand \set[1] {\{#1\}}

\renewcommand\phi{\varphi}
\renewcommand\epsilon{{\varepsilon}}

\newcommand \nonparallel {\mathrel{
\not\mathord{\kern -1.5 pt\parallel}}}
\newcommand \id {\textup{id}}

\newcommand\init [1] {} 
\newcommand\nothing [1] {}
\newcommand \restrict[2] {{#1\rceil_{#2}}}
\newcommand \dom {\textup{Dom}}
\newcommand \setncl [1] {\textup {NCL}(#1)}
\newcommand \setncd [1] {\textup {NCD}(#1)}

\newcommand \fun[1]{{^\star\kern-2pt{#1}} }
\newcommand \ffun[1]{{^\star\kern-1pt{#1}} } 
\newcommand \varfun[1]{{^\star\kern-1pt{(#1)}} }
\newcommand \bvarfun[1]{{^\star\kern-1pt{\bigl(#1\bigr)}} }
\newcommand \idmap[1] {\textup{id}_{#1}}
\newcommand\algc [1] {{#1}^{{\textup{acl}}}}
\newcommand \ezg {g}
\newcommand \bix {\langle x\rangle}

\newcommand \pnct[1] {#1^{\pmb\circ}}
\newcommand \jelz[2] {#1^{(#2)}}
\newcommand \sinsinpol [1]{f^{\textup{s-s}}_{#1}}
\newcommand \sincospol [1]{f^{\textup{s-c}}_{#1}}
\newcommand \Woddodd [2]{W^{\textup{11}}_{#1,#2}}
\newcommand \Weveneven [2]{W^{\textup{00}}_{#1,#2}}
\newcommand \Wevenodd [2]{W^{\textup{01}}_{#1,#2}}
\newcommand \Woddeven [2]{W^{\textup{10}}_{#1,#2}}
\newcommand \Wallall [2]{W^{i_{#1}i_{#2}}_{#1,#2}}
\newcommand \Wnone [2]{W_{#1,#2}}
\newcommand \Ker {\textup{Ker}}

\long\def\nothing #1  {}
\long\def\needsoon #1  {}
%
%
%
%
\begin{document}
\title[Constructibility of cyclic polygons]
{Geometric constructibility of cyclic polygons and a limit theorem}

\author[G.\ Cz\'edli]{G\'abor Cz\'edli}
\email{czedli@math.u-szeged.hu}
\urladdr{http://www.math.u-szeged.hu/\textasciitilde{}czedli/}
\address{University of Szeged, Bolyai Institute. 
Szeged, Aradi v\'ertan\'uk tere 1, HUNGARY 6720}

\author[\'A.\ Kunos]{\'Ad\'am Kunos}
\email{akunos@math.u-szeged.hu}
\urladdr{http://www.math.u-szeged.hu/\textasciitilde{}akunos/}
\address{University of Szeged, Bolyai Institute. 
Szeged, Aradi v\'ertan\'uk tere 1, HUNGARY 6720}

\thanks{This research was supported by
the European Union and co-funded by the European Social Fund  under the project ``Telemedicine-focused research activities on the field of Mathematics, 
Informatics and Medical sciences'' of project number    ``T\'AMOP-4.2.2.A-11/1/KONV-2012-0073'', and by  NFSR of Hungary (OTKA), grant number 
K83219}

\dedicatory{To the eightieth birthday of Professor L\'aszl\'o Leindler}

\keywords{
Inscribed polygon, cyclic polygon, circumscribed polygon, compass and ruler, straightedge and compass, geometric constructibility, Puiseux series, power series, holomorphic function,   field extension, Hilbert's irreducibility theorem}

\date{July 23, 2013; revised February 8, 2015\\
\phantom{nk} 2000 \emph{Mathematics Subject Classification}. Primary 51M04, secondary 12D05}

\begin{abstract} 
We study \emph{convex cyclic polygons}, that is, inscribed $n$-gons. Starting from P.\ Schreiber's idea, published in 1993, we prove that these polygons are not constructible from their \emph{side lengths} with  straightedge and compass, provided $n$ is at least five.  They are  non-constructible even in the particular case where they only have \emph{two} different \emph{integer} side lengths, provided that $n\neq 6$. To achieve this goal, we develop two tools of separate interest. First, 
we prove a \emph{limit theorem} stating  that, under reasonable  conditions, geometric constructibility is preserved under taking limits. To do so, we tailor a particular case of Puiseux's classical theorem on some generalized power series, called \emph{Puiseux series}, over algebraically closed
fields to an analogous  theorem on these series over real square root closed fields. Second,  based on \emph{Hilbert's irreducibility theorem},   we give a \emph{rational parameter theorem} that, under reasonable  conditions again, turns a non-constructibility result with a transcendental parameter into a non-constructibility result with a rational parameter. 
For $n$ even and at least six, we give an elementary proof for the non-constructibility of the cyclic $n$-gon from its side lengths and, also, from  the  \emph{distances} of its sides from the center of the circumscribed circle.  The fact that the cyclic $n$-gon is constructible from these distances for $n=4$ but non-constructible for  $n=3$  exemplifies that some conditions of the limit theorem cannot be omitted. 
\end{abstract}
\maketitle

\section{Introduction}\label{introsection}
\subsection{Target and some of the results}
A \emph{cyclic polygon} is a convex $n$-gon inscribed in a circle. Here $n$ denotes the \emph{order}, that is  the  number of vertices, of the polygon. \emph{Constructibility} is always understood as the classical geometric constructibility with  straightedge and  compass. Following Dummit and Foote~\cite[bottom of page 534]{dummitfoote}, we speak of an (unruled) \emph{straightedge} rather than a \emph{ruler}, because a ruler can have marks on it that we do not allow.  We know from Schreiber~\cite[proof of Theorem 2]{schreiber} that
\begin{equation}
\parbox{9.1cm}
{There exist positive real numbers $a,b,c$ such that   the cyclic pentagon with side lengths $a,a,b,b,c$ exists but it is  not  constructible from $a,b,c$ with straightedge and compass.}
\label{shrO5}
\end{equation}
Oddly enough, the  starting point of our research was that we could not understand the proof of Schreiber's next statement, \cite[Theorem 3]{schreiber}, which says that 
\begin{equation}
\parbox{9.1cm}
{If $n>5$, then the cyclic $n$-gon is in general not constructible from its side lengths with straightedge and compass.}
\label{schrng5}
\end{equation}
Note that \cite{schreiber} does not define
the meaning of ``in general not constructible''; we  analyze this concept later in the paper.
Supported by the details  given in the present paper later, we  think that the proof  of \eqref{schrng5} given in \cite{schreiber}  is incomplete. 
Fortunately, one can complete it with the help of  our limit theorem, Theorem~\ref{limthm}, which is of separate interest. Furthermore, the limit theorem leads to a slightly stronger statement.

We are only interested in the constructibility of a \emph{point} depending on  finitely many given points, because the constructibility of many other geometric objects, including cyclic polygons, reduces to this case easily.
A \emph{constructibility program} is a finite list of instructions that concretely prescribe which elementary Euclidean step for which points should be performed to obtain the next point. For example, 
\begin{equation}
\parbox{10.9 cm}{
``Take the intersection of the line through the ninth  and the thirteenth points with the circle whose center and radius are the first point and the distance between the fourth and sixth points, respectively."}
\label{constinSrt}
\end{equation}
 can be such an instruction. This  instruction is not always meaningful (e.g., the ninth and the thirteenth points may coincide and then they do not determine a line) and it can allow  choices (which intersection point should we choose). If there is a ``good'' choice at each instruction such that the last  instruction  produces the point that we intend to construct, then the constructibility program \emph{works} for the given data, that is, for the initial points. 
In our statements below, unless concrete data are mentioned,  
\begin{equation}
\parbox{9.8cm}{
A positive statement of \emph{constructibility}    means the existence of a  constructibility program that works \emph{for  all meaningful data} that define a non-degenerate cyclic polygon of the given order.}
\label{pzCrVy}
\end{equation} 

The cyclic polygon with side lengths $a_1,\dots, a_n$, in this order, is denoted by $P_n(a_1,\dots, a_n)$. 
 As usual, $\mathbb N:=\set{1,2,3,\dots}$ and $\mathbb N_0:=\set0\cup\mathbb N$.  For $i \in\mathbb N$, we define 
\begin{equation}
\parbox{8.2cm}
{\hskip -0.6cm $\setncl i =\bigl\{ n\in\mathbb N:$   $\exists \tuple{a_1,\dots, a_n}\in \mathbb N^n$ such that $P_n(a_1,\dots, a_n)$ exists, it is not constructible from $a_1,\dots, a_n$, and 
 $|\set{a_1,\dots, a_n}|\leq i\bigr\}$; }
\label{ncldef}
\end{equation}
the acronym comes from ``Non-Constructible from side Lengths''. Note that the
Gauss--Wantzel theorem, see  Wantzel~\cite{wantzel}, can be formulated in terms of $\setncl 1$; for example, $7\in\setncl 1$ and $17\notin\setncl 1$. More precisely, 
$n\in \setncl 1$ iff the regular cyclic $n$-gon is non-constructible iff $n$ is not of the form $2^k p_1\cdots p_t$ where $k,t\in\mathbb N_0$ and $p_1,\dots,p_t$ are pairwise distinct Fermat primes.
If $n$ belongs to  $\setncl i$ for some $i \in\mathbb N$, then the  the cyclic $n$-gon is not constructible in  our ``concrete'' sense given above or in any reasonable ``general'' sense. Clearly, for $i=1$,    $\mathbb N^n$  in   \eqref{ncldef}  can be replaced instead of $\mathbb R^n$, because the unit distance at a geometric construction is up to our choice.
However, for $i>1$,  $\mathbb N^n$  in   \eqref{ncldef}  rather than $\mathbb R^n$  makes the  result below stronger. 
One of our  goals is to prove the following theorem, which is a stronger statement than \eqref{schrng5}. 
Parts \eqref{thmourc} and \eqref{thmourd} below can be combined; however, we  formulate them separately, because  we have an elementary proof for  \eqref{thmourc} but not for \eqref{thmourd}. Part \eqref{thmourbc} is well-known.

\goodbreak
\begin{theorem}\label{thmour}\ 
\begin{enumeratei}
\item\label{thmoura} For $n\in\set{3,4}$,  the cyclic $n$-gon is constructible in general from its side lengths with straightedge and compass.
\item\label{thmourb} 
\begin{enumerate}[\upshape (a)]
\item $5\in\setncl 2\setminus\setncl 1$.
\item\label{thmourbb}  $6\in\setncl 3$ but $6\notin\setncl 2$. Furthermore, if $a_1,\dots,a_6$ are positive \emph{real}  numbers such that $P_6(a_1,\dots,a_6)$ exists and $| \set{a_1,\dots, a_6}|\leq 2$, then  the cyclic hexagon $P_6(a_1,\dots,a_6)$ can be constructed from its side lengths.  
\item\label{thmourbc}  $7\in\setncl 1$.
\end{enumerate}
\item\label{thmourc}
If $n\geq 8$ is an \emph{even} integer, then $n\in\setncl 2$.
\item\label{thmourd}
If $n\geq 8$ is an \emph{odd} integer, then $n\in\setncl 2$.
\end{enumeratei}
\end{theorem}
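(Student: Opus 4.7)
My plan for Theorem~\ref{thmour} proceeds part by part, reserving the paper's new machinery for the last two items. Parts (i) and (ii)(c) are classical: for $n=3$ the SSS construction suffices, and for $n=4$, given side lengths $a,b,c,d$ of a cyclic quadrilateral $ABCD$ with $AB=a$, $BC=b$, $CD=c$, $DA=d$, Ptolemy's theorem together with its ratio companion yields the closed form $AC^{2}=(ab+cd)(ac+bd)/(ad+bc)$, so the diagonal is constructible by field operations and one square root, and the quadrilateral splits into two SSS triangles. Part (ii)(c), namely $7\in\setncl{1}$, is Gauss--Wantzel applied to the regular heptagon.

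For the remaining items in (ii), the direction $5\notin\setncl{1}$ is again Gauss--Wantzel (the regular pentagon is constructible), while $5\in\setncl{2}$ strengthens Schreiber's pentagon from real to integer side-lengths with two distinct values: starting from Schreiber's polynomial equation for the circumradius of a pentagon of shape $\tuple{a,a,b,b,b}$ (or similar), I would show by direct computation that for suitable small integers $a,b$ the minimal polynomial of the circumradius over $\mathbb{Q}$ has a factor of odd prime degree, blocking constructibility. The equality $6\notin\setncl{2}$ I would handle by a finite case analysis on the cyclic arrangements of two values on six sides; each case admits a reflectional or rotational symmetry from which one explicitly constructs the circumscribed circle via inscribed-angle considerations. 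For $6\in\setncl{3}$, exhibit a specific integer triple $(a,b,c)$ and verify non-constructibility through the degree of the defining polynomial of the circumradius.

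Part (iii) asks for two-valued non-constructibility on every even $n\geq 8$ and the abstract promises an elementary proof. My approach is algebraic: write $x=1/(2R)$ for the reciprocal of twice the circumradius, so that each side of length $a_i$ determines a half-central-angle $\phi_i$ with $\sin\phi_i=a_ix$ and $\cos\phi_i=\sqrt{1-a_i^{2}x^{2}}$, and the closure condition $\sum\phi_i=\pi$ becomes $\prod_i(\cos\phi_i+\mathrm{i}\sin\phi_i)=-1$. Separating real and imaginary parts and clearing square roots yields a polynomial equation $F(x)=0$ with $F\in\mathbb{Z}[a_1,\dots,a_n][x]$. With only two side values $a,b$ and a judicious symmetric pattern (alternating $a,b,a,b,\dots$ when $4\mid n$, or a block pattern otherwise), $F$ has tractable degree; I would choose small integers $a,b$ for which $F$, after removing extraneous factors coming from non-convex closures, has an irreducible factor of odd prime degree over $\mathbb{Q}$. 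The main obstacle is producing a single uniform pattern valid for every even $n\geq 8$; failing uniformity, I would split into residue classes modulo a small integer and treat each separately.

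Part (iv) is the heart of the paper and invokes both new tools. For odd $n\geq 9$ the direct route of (iii) loses the reflection symmetry that made the polynomial tractable, so I would set up a one-parameter family of cyclic $n$-gons with, say, $n-1$ sides of length $1$ and one side of length $t$. The plan is to identify a parameter value $t_{0}$ at which non-constructibility can be established directly, using a Puiseux expansion of the circumradius whose leading behaviour has non-$2$-power algebraic degree. Theorem~\ref{limthm} would then propagate non-constructibility to parameters in every neighbourhood of $t_{0}$; the rational parameter theorem, built on Hilbert's irreducibility, would convert a transcendental $t_{0}$ into a rational $t$; and clearing denominators together with a rescaling produce integer side-lengths with exactly two distinct values. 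The principal obstacle is verifying the hypotheses of the limit theorem: the abstract's cautionary remark about perpendicular distances to the centre at $n\in\{3,4\}$ shows that these hypotheses are delicate, so the family and the limit point must be chosen to avoid the corresponding pathologies.
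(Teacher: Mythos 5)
Your outline matches the paper's overall architecture fairly closely: for $n=3,4$ a direct reduction to SSS triangles (the paper uses the law of cosines and $\cos(\pi-\delta)=-\cos\delta$ rather than Ptolemy, but either gives $\cos\delta_1$ as a rational function of the side lengths, then one square root finishes); part (ii)(c) is Gauss--Wantzel exactly as you say; part (ii)(a) and (ii)(b) are done by exhibiting concrete integer data and computing the resulting polynomial in $u=1/(2r)$ -- though note that for the pentagon $P_5(1,2,2,2,2)$ the irreducible factor has degree $6=2\cdot 3$, not ``odd prime degree'', so the precise obstruction you should aim for is ``degree not a power of $2$'', per Proposition~\ref{szrk}\eqref{szrka}; and for $6\notin\setncl 2$ the case analysis is only on $k\in\{1,2,3\}$ with $m=6-k$, since the circumradius is insensitive to the cyclic order of the sides, so there is no need to enumerate arrangements. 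Part (iv) is also essentially the paper's route: a one-parameter family, non-constructibility at the boundary, the Limit Theorem to get a transcendental parameter, Hilbert's irreducibility via Theorem~\ref{ratiothm} to trade it for a rational one, and a rescaling to land in $\setncl 2$. The paper uses $\ell\in\{7,9\}$ copies of side $1$ and the remaining sides equal to $c\to 0^+$, so the limiting object is a regular $7$- or $9$-gon, non-constructible by Gauss--Wantzel.

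The genuine gap is in your plan for part (iii). You propose to form the closure relation $\prod_i(\cos\phi_i+\mathrm{i}\sin\phi_i)=-1$, clear square roots, and then search for ``small integers $a,b$'' and ``judicious symmetric patterns'' that yield an irreducible factor of odd prime degree, splitting into residue classes mod a small integer if needed. As stated, this is not a proof and it is unlikely to become one along those lines: you have no mechanism forcing irreducibility, no control on the degree of the surviving factor after discarding spurious components, and ``small integers'' gives you no leverage over arbitrarily large $n$. The paper's elementary proof hinges on three specific moves that your outline does not identify. First, Lemma~\ref{nakuml} (via Nagura's prime-gap bound) produces for every even $n\geq 8$ a prime $p$ with $n/2<p<n$ that is not a Fermat prime, so that $p-1$ is automatically not a power of $2$. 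Second, taking $p$ sides of length $a$ and $n-p$ of length $b$, and using $\sin(p\alpha)=\sin((n-p)\beta)$, yields a polynomial $f_p^{(1)}(x)=\Sigma_1^f-\Sigma_2^f$ of degree exactly $p$ in $x$, and crucially \emph{with no square roots at all}: because $n$ is even, both $p$ and $n-p$ are odd, so the exponents $(p-j)/2$ and $(n-p-j)/2$ in \eqref{sinsum} are integers for odd $j$. Your ``separating real and imaginary parts and clearing square roots'' would instead produce a polynomial of much larger, uncontrolled degree. Third, the congruences $a\equiv 1$, $b\equiv 0 \pmod{p^2}$ annihilate $\Sigma_2^f$ modulo $p^2$ and, together with $p\mid\binom{p}{j}$ for $0<j<p$ and Fermat's little theorem on the leading coefficient, set up an Eisenstein--Sch\"onemann criterion at $p$ for $f_p^{(1)}(x)/x$, which is therefore irreducible of degree $p-1$. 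This interlocking of a prime-gap estimate with an Eisenstein argument tailored by the mod-$p^2$ choice of side lengths is what makes the argument uniform in $n$; without it, the ``judicious symmetric pattern'' idea is stuck exactly where you honestly flag the obstacle.

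One further note on part (iv): you correctly anticipate that the family and its limit point must be chosen to avoid the pathology illustrated by $D_3$ versus $D_4$. The paper's answer is that the Limit Theorem (Theorem~\ref{limthm}) requires $d$ to be constructible, and taking $c\to 0^+$ with the remaining sides equal to $1$ makes $d=0$, which is trivially constructible; the ill-behaved example in Remark~\ref{rbhzpGbF} arises precisely because there $d$ is the (non-constructible) circumradius of $D_3$. So choosing the vanishing-side limit is exactly the safe choice.
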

Note that if  the regular $n$-gon is non-constructible, then $n\in\setncl 1$. However, if the regular $n$-gon is constructible, 
and infinitely many $n\geq 8$ are such,
then $n\notin\setncl 1$ and, for these $n$, parts \eqref{thmourc} and \eqref{thmourd} above cannot be strengthened by changing  $\setncl 2$ to $\setncl 1$. 
The elementary method we use to prove Part \eqref{thmourc} of Theorem~\ref{thmour} easily leads us to the following  statement on cyclic $n$-gons of \emph{even order}; see Figure~\ref{fig1} for an illustration. Assume that, with straightedge and compass,  we want to construct the cyclic $n$-gon $D_n(d_1,\dots, d_n)$ from the distances $d_1,\dots,d_n$ of its sides from the center of its circumscribed circle. We define $\setncd i$ analogously to \eqref{ncldef}; now the acronym comes from `Non-Constructible from Distances''.

\begin{theorem}\label{thmdi}  If $n\geq 6$ is even, then  $n\in \setncd 2$.
\end{theorem}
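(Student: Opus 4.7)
The plan is to translate the closure condition of the cyclic polygon into a single polynomial equation for the circumradius $R$, and then to exhibit positive integers $d_1,d_2$ for which this equation forces $R$ to have algebraic degree over $\mathbb{Q}$ that is not a power of $2$. A chord at distance $d$ from the center subtends central half-angle $\arccos(d/R)$, so if $k$ sides lie at distance $d_1$ and $n-k$ at distance $d_2$ the polygon closes up exactly when
\begin{equation*}
k\arccos(d_1/R)+(n-k)\arccos(d_2/R)=\pi.
\end{equation*}
Applying $\cos$ and invoking the Chebyshev polynomials $T_j$ of the first kind, for which $\cos(j\theta)=T_j(\cos\theta)$, rewrites this as the purely algebraic condition $T_k(d_1/R)+T_{n-k}(d_2/R)=0$. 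When $k$ and $n-k$ have the same parity, clearing the denominator by the appropriate power of $R$ produces a polynomial $G(r)\in\mathbb{Z}[r]$ in the new variable $r=R^2$.

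For even $n\ge 8$ I would pick the split so that $\deg G$ is \emph{not} a power of $2$: for $n=8$ take $k=1$ (cubic); for $n=10$ take $k=3$ (cubic); for $n=12$ take $k=1$ (quintic) or $k=5$ (cubic); and generally some odd $k<n/2$ gives degree $(n-k-1)/2$ divisible by an odd prime. Having fixed $k$, I would choose $d_1,d_2\in\mathbb{N}$ so that $G$ is irreducible over $\mathbb{Q}$. As a prototype, $n=8$, $k=1$, $(d_1,d_2)=(1,2)$ gives $G(r)=13r^3-448r^2+3584r-8192$, whose reduction modulo $7$ is $6r^3+2$; this has no root in $\mathbb{F}_7$ (the cubes mod $7$ are $\{0,1,6\}$), so $G$ is irreducible over $\mathbb{Q}$. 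Analogous mod-$p$ or Eisenstein arguments should handle the other $n$. Once $G$ is irreducible and its relevant real root $r_0>\max(d_1^2,d_2^2)$ yields a valid polygon, then $[\mathbb{Q}(r_0):\mathbb{Q}]=\deg G$ is not a power of $2$; consequently neither $r_0=R^2$ nor $R$ lies in the Euclidean closure of $\mathbb{Q}$, and the polygon is non-constructible from $d_1,d_2$.

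The two technical hurdles I foresee are the uniform verification of irreducibility of $G$ as $n$ varies — this should follow from a suitable mod-$p$ argument for a carefully chosen pair $(d_1,d_2)$ — and the check that the selected root $r_0$ of $G$ genuinely corresponds to a non-degenerate convex cyclic $n$-gon rather than to a spurious solution of the cosine equation (which is not injective on the relevant range). The most awkward case in this scheme is $n=6$: every admissible split $(1,5)$, $(2,4)$, $(3,3)$ produces a $G$ of degree at most $2$ in $R^2$, so $R$ is automatically a Euclidean number over $\mathbb{Q}$. Covering $n=6$ by the direct Chebyshev scheme alone therefore appears to require either a subtler configuration, an auxiliary algebraic identity, or, in the spirit of the paper, a combination of the rational-parameter theorem with the limit theorem established earlier.
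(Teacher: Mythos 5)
Your scheme for $n\geq 8$ is essentially the paper's approach, just in different coordinates: you form the polynomial relation $T_k(d_1/R)+T_{n-k}(d_2/R)=0$ and clear denominators to get $G(r)$ in $r=R^2$, while the paper expands $\cos(p\alpha)+\cos((n-p)\beta)=0$ via \eqref{cossum} into a polynomial $g_p(x)$ in $x=1/r$ whose quotient $g_p(x)/x$ has degree $p-1$; since $x^2=1/r^2$, the two polynomials are reverses of one another and carry the same Galois-theoretic content. Where the paper is stronger is in making the irreducibility argument \emph{uniform}: Lemma~\ref{nakuml} supplies a non-Fermat prime $p\in(n/2,n)$, and choosing $a\equiv 1$, $b\equiv 0\pmod{p^2}$ makes $g_p(x)/x$ Eisenstein at $p$ for every even $n\geq 8$ simultaneously, with $\deg=p-1$ automatically not a power of $2$. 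Your proposal instead leaves the irreducibility of $G$ to ``mod-$p$ or Eisenstein arguments should handle the other $n$'' -- that is, a case-by-case verification that is not actually carried out -- and you would additionally need to confirm existence of $D_n(d_1,\dots,d_2)$ for each chosen $(d_1,d_2)$ (the paper does this by taking $a/b$ close to $1$). A small slip: $-8192\equiv 5\pmod 7$, so the reduction is $6r^3+5$, not $6r^3+2$; fortunately $5\notin\{0,1,6\}$ as well, so your irreducibility conclusion for $n=8$ survives.

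Your hesitation at $n=6$ is the real finding, and it is sharper than you present it. You observe that every split $(1,5),(2,4),(3,3)$ with only two distinct distances yields a relation of degree at most $2$ in $R^2$; this in fact proves that $D_6(d_1,\dots,d_6)$ is \emph{always constructible} when $|\{d_1,\dots,d_6\}|\leq 2$, hence $6\notin\setncd 2$. The paper's own proof of the $n=6$ case uses $d_1=\dots=d_4=1000$, $d_5=999$, $d_6=1001$ -- \emph{three} distinct distances -- together with the identity \eqref{thrsmm}, which establishes only $6\in\setncd 3$. So the $n=6$ clause of the theorem statement is not actually supported by the paper's proof, and your analysis indicates that as written it is false; neither the limit theorem nor the rational-parameter theorem can rescue it, because the obstruction is that no two-distance configuration for $n=6$ is non-constructible at all. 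The theorem should either begin at $n\geq 8$, or assert $6\in\setncd 3$ for the exceptional value.
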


Evidently, $n\in\setncd 1$ iff the regular $n$-gon is non-constructible.
To shed more light on Theorem~\ref{thmdi}, we recall the following statement from  Cz\'edli and Szendrei~\cite[IX.1.26--27,2.13 and page 309]{czgsza}, which was proved by computer algebra; note that we do not claim that $A_i\cap \setncd{i-1}=\emptyset$ holds in it.

\begin{proposition}[\cite{czgsza}] \label{prop100}
Let $A_4:=\set{6,8}$, $A_3:=\set{3, 5, 12,24,30}$, 
\[
A_2:=\set{10, 15, 16, 17, 20, 32, 34, 40, 48, 51, 60, 64, 68, 80, 85, 96},
\]
and $A_1:=\set{3,5,6,7,\dots,100}\setminus(A_2\cup A_3 \cup A_4)$.  Then, for every $i\in\set{1,2,3,4}$, 
$A_i\subseteq \setncd i$. As opposed to $D_3(d_1,\dots,d_3)$, $D_4(d_1,\dots,d_4)$ is constructible  from  $\tuple{d_1,\dots,d_4}$.
\end{proposition}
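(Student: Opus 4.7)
The plan is to translate constructibility of $D_n(d_1,\dots,d_n)$ into a minimal-polynomial question for the circumradius $r$. Writing $\alpha_i/2=\arccos(d_i/r)$ for half of the central angle facing the $i$-th side, the polygon-closing condition $\sum_i\alpha_i=2\pi$ reads $\sum_{i=1}^{n}\arccos(d_i/r)=\pi$. Taking cosines and iteratively clearing the square roots $\sqrt{r^2-d_i^2}$ yields an integer polynomial relation $F_n(r^2;d_1^2,\dots,d_n^2)=0$. Since the polygon is constructible from $d_1,\dots,d_n$ with straightedge and compass iff $r$ is, and since, for positive integer $d_i$, constructibility of $r$ over $\mathbb Q$ forces $[\mathbb Q(r):\mathbb Q]$ to be a power of $2$, proving $n\in\setncd i$ reduces to exhibiting a tuple $\tuple{d_1,\dots,d_n}\in\mathbb N^n$ with $|\set{d_1,\dots,d_n}|\leq i$ such that the $\mathbb Q$-irreducible factor of $F_n(R;d_1^2,\dots,d_n^2)\in\mathbb Q[R]$ containing the geometrically valid value of $r^2$ has degree not a power of $2$.

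For each $n\in\set{3,5,6,\dots,100}$ this becomes a finite computer-algebra task. Generate $F_n$ symbolically; loop over small positive integer tuples $\tuple{d_1,\dots,d_n}$ with at most $i$ distinct entries; for each such tuple, factor $F_n(R;d_1^2,\dots,d_n^2)$ over $\mathbb Q$, identify the irreducible factor containing the correct root (by numerical localisation of the positive $R$ that actually solves $\sum\arccos(d_i/\sqrt R)=\pi$), and read off its degree; record the smallest $i$ for which a witness is found. Values of $n$ in $A_1$ demand no genuine search: by the Gauss--Wantzel theorem, taking $d_1=\dots=d_n=1$ makes $D_n$ the regular $n$-gon, which is non-constructible precisely when $n$ is not of the form $2^kp_1\cdots p_t$ with distinct Fermat primes $p_j$. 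The sets $A_2$, $A_3$, $A_4$ are then the outputs of the search, grouped by the minimal number of distinct distances required for a witness.

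The remaining assertion, constructibility of $D_4$ from the distances $\tuple{d_1,d_2,d_3,d_4}$, is direct. Rewriting the closing equation as $\arccos(d_1/r)+\arccos(d_2/r)=\pi-\arccos(d_3/r)-\arccos(d_4/r)$ and taking cosines gives
\begin{equation*}
d_1d_2+d_3d_4=\sqrt{(r^2-d_1^2)(r^2-d_2^2)}+\sqrt{(r^2-d_3^2)(r^2-d_4^2)}.
\end{equation*}
Squaring once isolates a single remaining radical; squaring a second time eliminates it and leaves a polynomial equation in $r^2$ whose positive real root is expressible by rational operations and one further square root in $d_1,\dots,d_4$, so $r$, and hence the whole polygon, is constructible. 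I expect the principal obstacle in the main part of the argument to be purely computational: $F_n$ produced by naive iterated squaring has degree in $r^2$ growing roughly as $2^{n-1}$, so for $n$ close to $100$ both its generation and its $\mathbb Q$-factorisation across many candidate tuples strain the computer algebra system; the work in \cite{czgsza} lies in organising the search economically by carefully pairing indices in the elimination of radicals, exploiting symmetries induced by repeated distances to split $F_n$ before factoring, and bounding the height of the distances searched so that termination is guaranteed.
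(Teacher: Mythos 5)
Your overall strategy matches the paper's in spirit: both reduce constructibility of $D_n$ to a degree question for the minimal polynomial of the circumradius (equivalently, of $u=1/r$) over $\mathbb{Q}(d_1,\dots,d_n)$, settle $A_1$ trivially via the Gauss--Wantzel theorem, and delegate the integer-witness search to computer algebra; note that the paper itself only recalls the $n\in\{3,4,5\}$ cases (and $n=6$ in the proof of Theorem~\ref{thmdi}) from \cite{czgsza} rather than reproving the whole table.  The main problem is your $D_4$ argument.  From $d_1d_2+d_3d_4=\sqrt{(r^2-d_1^2)(r^2-d_2^2)}+\sqrt{(r^2-d_3^2)(r^2-d_4^2)}$, the route you sketch --- square, isolate the remaining radical $2AB$, square again --- produces a polynomial $(C^2-A^2-B^2)^2-4A^2B^2$ that is \emph{a priori} quartic in $R=r^2$, and a quartic minimal polynomial need not have a $2$-group Galois group.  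So ``leaves a polynomial equation in $r^2$ whose positive real root is expressible by rational operations and one further square root'' is exactly the claim that needs proof, not an observation.  What actually happens is a nontrivial collapse: the $R^4$, $R^3$ and $R^0$ coefficients all vanish (the last because $(d_1d_2+d_3d_4)^2-d_1^2d_2^2-d_3^2d_4^2=2d_1d_2d_3d_4$), so after dividing by $R$ one is left with a \emph{linear} equation in $R$ --- equivalently, in the paper's variable $u=1/r$, a relation $c_2u^2+c_0=0$.  One must then check that the leading coefficient $c_2$ (the paper computes it as $4(d_1d_2+d_3d_4)(d_1d_3+d_2d_4)(d_1d_4+d_2d_3)$ up to an obvious typo) is nonzero.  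Without the collapse and the nonvanishing of $c_2$, your constructibility claim for $D_4$ is unsupported.

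A secondary point: the degree-$\sim 2^{n-1}$ blow-up you predict does not occur in the actual computations.  Rather than iterate cosine-addition and squaring across all $n$ sides, the proofs in Section~\ref{smallorder} and in \cite{czgsza} group the central half-angles into three blocks $\kappa_1,\kappa_2,\kappa_3$ with $\kappa_1+\kappa_2+\kappa_3=\pi$, apply the identity $(\cos\kappa_1)^2+(\cos\kappa_2)^2+(\cos\kappa_3)^2+2\cos\kappa_1\cos\kappa_2\cos\kappa_3-1=0$, and substitute the multiple-angle polynomials $\sincospol k$ from Lemma~\ref{LmMrZT}; this yields polynomials of degree $O(n)$ in $u$, and since at most four distinct distances appear, the search space is small.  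So the ``economic organisation'' you conjecture for \cite{czgsza} is specifically this three-angle grouping, not a clever pairing order in a generic radical-elimination scheme.
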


The following statement, which we recall from Cz\'edli and Szendrei~\cite[IX.2.14]{czgsza}, extends the scope of Theorem~\ref{thmdi}  to circumscribed polygons.

\begin{remark}[\cite{czgsza}] Let $3\leq n\in\mathbb N$.
With the notation given before Theorem~\ref{thmdi}, 
a circumscribed  $n$-gon $T_n$ is constructible from the distances $t_1,\dots,t_n$ of its vertices from the center of the inscribed circle if and only if  the inscribed polygon $D_n(1/t_1,\dots,1/t_n)$ is constructible from  $\tuple{1/t_1,\dots,1/t_n}$ or, equivalently, from  $\tuple{t_1,\dots,t_n}$.
\end{remark}

\begin{figure}[htb]
\centerline
{\includegraphics[scale=1.0]{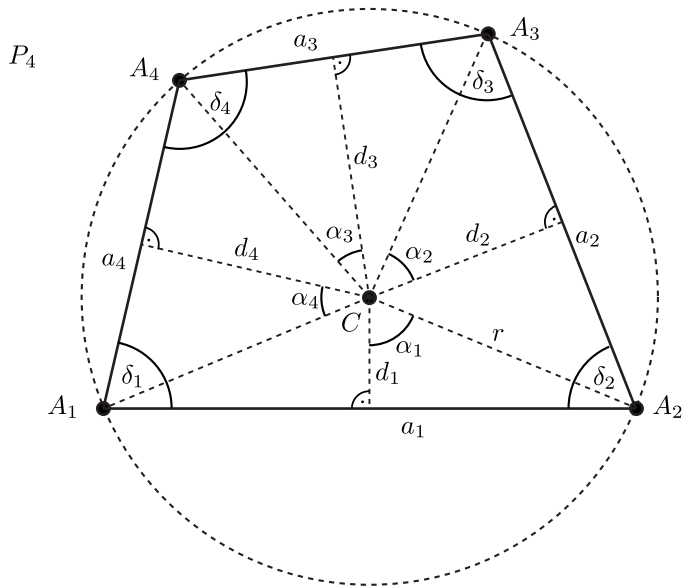}}
\caption{A cyclic $n$-gon for $n=4$\label{fig1}}
\end{figure}

\subsection{Prerequisites and outline} 
Undergraduate, or sometimes graduate, mathematics is sufficient to follow the paper. The reader is assumed to know the rudiments of simple field extensions and that of calculus. Following, say, Cohn~\cite[page 9]{cohn}, Gr\"atzer~\cite[page 1]{gratzerUA}, and R\'edei~\cite[page 12]{redei}, the notation $X\subset Y$ stands for proper inclusion, that is, $X\subset Y$ iff $X\subseteq Y$ and $X\neq Y$.
 
The paper is structured as follows. Section~\ref{ntsect} gives Schreiber's argument for \eqref{schrng5}; the reader can form his or her own evaluation before reading the present paper further. 
Section~\ref{evensection} gives an elementary proof for part~\eqref{thmourc} of  
Theorem~\ref{thmour}, that is, for all even $n\geq 8$; this section also proves Theorem~\ref{thmdi} for $n\geq 8$.
Section~\ref{smallorder} is devoted to cyclic polygons of small order, that is, for $n<8$; here we prove parts~\eqref{thmoura}--\eqref{thmourb} of  Theorem~\ref{thmour} and  the case $n=6$ of  Theorem~\ref{thmdi}. Also, this section  recalls some arguments from  \cite{czgsza} to prove some parts of  Proposition~\ref{prop100}.
In Section~\ref{moreabout}, we comment on Schreiber's argument. Section~\ref{basefieldext} collects some basic facts on field extensions. In particular, this section gives a rigorous algebraic treatment for  real functions composed from the four arithmetic operations and $\sqrt{\phantom o}$.  Section~\ref{powsersect} proves that the functions from the preceding section can be expanded into \emph{power series with dyadic rational exponents} such that the coefficients of these series are geometrically constructible. Section~\ref{sectpuiseuxcomments}
compares  these  expansions 
 with  Puiseux series and Puiseux's theorem.
Based on our expansions from Section~\ref{powsersect},    we prove a \emph{limit theorem} for geometric constructibility in Section~\ref{limthemsect}. Using this theorem, 
Section~\ref{limatworksection} proves a weaker form of parts~\eqref{thmourc}--\eqref{thmourd} of Theorem~\ref{thmour}, with transcendental parameters rather then integer ones, and points out how one could complete Schreiber's argument. Armed with Hilbert's irreducibility theorem, Section~\ref{hilbertsection} proves a  
\emph{rational parameter theorem} that, under reasonable  conditions, turns a non-constructibility result with a transcendental parameter into a non-constructibility result with a rational parameter. Finally, based on the tools elaborated in the earlier sections, Section~\ref{compLsect} completes the proof of Theorem~\ref{thmour} in few lines.

Since the Limit Theorem and the Rational Parameter Theorem are about geometric constructibility in general, not only for cyclic polygons, they can be of separate interest.

\section{Schreiber's argument}\label{ntsect}
Most of Schreiber~\cite{schreiber} is clear and
practically all mathematicians can follow it.  
We only deal with  \cite[page 199, lines 3--15]{schreiber}, where, in order to prove \eqref{schrng5}, 
  he claims to perform the induction step from $(n-1)$-gons to $n$-gons.
His argument  is basically the following paragraph; the only difference is that we use the radius (of the circumscribed circle) rather than the coordinates of the vertices. This simplification is not an essential change, because 
the (geometric) constructibility of a cyclic $n$-gon is clearly equivalent to the  constructibility  of its radius.  

Suppose, for a contradiction, that the radius of the cyclic $n$-gon is in general constructible from the  side lengths $a_1,\dots,a_n$. Hence, this radius is a  quadratic irrationality $R$ depending on the variables $a_1,\dots,a_n$, and such as it is a {continuous function} of its  $n$ variables. On the other hand, the geometric dependence of the radius from  $a_1,\dots,a_n$  is described by a continuous function $f$ of the same variables. Because for $a_n\to 0$ the radius of the $n$-sided inscribed polygon converges to that of the  $(n-1)$-sided polygon with side lengths $a_1,\dots, a_{n-1}$ and the continuous functions $R$ and $f$ are identical for $a_n\neq  0$, $R$  takes the same limit value for $a_n\to 0$ as $f$. That is, for $a_n=0$, the quadratic irrationality $R$ describes the constructibility of the radius of the inscribed $(n-1)$-gon. Finally, iterating the same process, we obtain that the radius of the cyclic $(n-2)$-gon, that of the cyclic $(n-3)$-gon, \dots, that of the cyclic $5$-gon are constructible, which contradicts \eqref{shrO5}.

Before we analyze Schreiber's argument in Section~\ref{moreabout}, the reader is invited to form his or here own opinion. Note that  \cite{schreiber} does not define what ``quadratic irrationalities'' are.

\section{An elementary proof for $n$ even}
\label{evensection}
Let $a_1$,\dots, $a_n$ be arbitrary positive real numbers. 
It is proved in Schreiber~\cite[Theorem 1]{schreiber} that 
\begin{equation}
\parbox{8.5cm}
{There exists a cyclic $n$-gon with side lengths $a_1,\dots, a_n$ iff $a_j<\sum\set{a_i: i\neq j}$ holds for every $i\in\set{1,\dots, n}$.
}\label{shcrnqltS}
\end{equation} 
Our elementary approach will be based on the following well-known statement from classical algebra.  Unfortunately, a thorough treatment of geometric constructibility is usually missing from current books on algebra in English, at least  in our reach; so it is not so easy to give references. 
Part~\eqref{szrka} below is 
Herstein~\cite[Theorem 5.5.2 in page 206]{herstein} and \cite[Theorem III.3.1 in page 63]{czgsza}. 
Part~\eqref{szrkb} is the well-known  Eisenstein-Sch\"onemann criterion, see Cox~\cite{cox} for our terminology.  
Part~\eqref{szrkc} is less elementary and will only be used in Section~\ref{hilbertsection}, but even this part is often taught for graduate students. This part is \cite[Theorem V.3.6]{czgsza}, and also Kiss~\cite[Theorem 6.8.17]{kiss},   Kersten~\cite[Satz in page 158]{kersten}, and Jacobson~\cite[Criterion 4.11.B in page 263]{jacobsonI}. It also follows from Gilbert and Nicholson~\cite[Theorem 13.5 in page 254]{gilbertnicholson} (combined with Galois theory).  
The degree of a polynomial $f=f(x)$ is denoted by $\deg(f)$, or by $\degx xf$ if we want to indicate the variable. Let 
$a_1,\dots, a_k$, and $b$ be real numbers; as usual, the smallest subfield $K$ of $\mathbb R$ such that $\set{a_1,\dots,a_k}\subseteq K$ is denoted by $\mathbb Q(a_1,\dots,a_n)$. In this case, instead of ``$b$ is constructible from $a_1,\dots,a_k$'', we can also say that $b$ is \emph{constructible over} the field $K$. We shall use this terminology only for finitely generated subfields of $\mathbb R$. 
By definition, a \emph{complex number is constructible} if both of its real part and imaginary part are constructible. Equivalently, if it is constructible as a point of the plane. 

\goodbreak

\begin{proposition}\label{szrk} \ 
\begin{enumerate}[\upshape (A)]
\item\label{szrka} If $f \in\mathbb Q[x]$ is an irreducible polynomial in $\mathbb Q[x]$, $c\in\mathbb R$,  $f(c)=0$, and the degree $\deg(f)$ is not a power of $\,2$, then $c$ is not constructible over $\mathbb Q$. 
\item\label{szrkb} 
{If $f(x)=\sum_{j=0}^k a_jx^j\in \mathbb Z[x]$ and $p$ is a prime number such that $p\notmid a_k$, 
$p^2\notmid a_0$, and $p\mid a_j$ for $j\in\set{0,\dots, k-1}$, then $f(x)$ is irreducible in $\mathbb Q[x]$.}
\item\label{szrkc} Let $K$ be a finitely generated subfield of $\mathbb R$. 
If $f \in K[x]$ is an irreducible polynomial in $K[x]$, $c\in\mathbb R$,  and $f(c)=0$, then  $c$ is  constructible over $K$ if and only if the degree of the splitting field of $f$ over $K$ is a power of $\,2$.
\end{enumerate}
\end{proposition}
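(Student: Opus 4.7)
The plan is to outline the three classical arguments, since each is textbook material and the paper explicitly cites multiple standard sources.

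For part~\eqref{szrka}, I would invoke the standard tower-of-quadratic-extensions principle: every constructible real number lies in some field $K_m$ at the top of a chain $\mathbb Q=K_0\subset K_1\subset\cdots\subset K_m$ with $[K_{i+1}:K_i]=2$, so $[\mathbb Q(c):\mathbb Q]$ divides $2^m$ and is therefore a power of $2$. Since $f$ is irreducible in $\mathbb Q[x]$ and $f(c)=0$, $f$ is (a scalar multiple of) the minimal polynomial of $c$, whence $\deg(f)=[\mathbb Q(c):\mathbb Q]$ is a power of $2$, contradicting the hypothesis.

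For part~\eqref{szrkb}, I would give the usual Eisenstein--Sch\"onemann argument. Suppose for contradiction that $f=gh$ with $g,h\in\mathbb Q[x]$ of positive degree; by Gauss's lemma we may take $g,h\in\mathbb Z[x]$. Reducing mod $p$ and using the hypotheses on the coefficients yields $\overline f(x)=\overline{a_k}x^k$ in $\mathbb F_p[x]$. By unique factorization in $\mathbb F_p[x]$, both $\overline g$ and $\overline h$ must be monomials of positive degree, so $g(0)\equiv 0\equiv h(0)\pmod p$. Then $p^2$ divides $g(0)h(0)=a_0$, contradicting $p^2\notmid a_0$.

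For part~\eqref{szrkc}, I would handle both directions via Galois theory. For $(\Leftarrow)$, if the splitting field $L$ of $f$ over $K$ has degree a power of $2$, then $\mathrm{Gal}(L/K)$ is a $2$-group, hence admits a normal series with successive quotients of order $2$. Via the Galois correspondence this yields a tower $K=F_0\subset F_1\subset\cdots\subset F_t=L$ with $[F_{i+1}:F_i]=2$, so $c\in L$ is constructible over $K$. For $(\Rightarrow)$, starting with a tower of quadratic extensions $K=K_0\subset\cdots\subset K_m$ reaching $c$, I would take the \emph{normal closure} $N$ of $K_m$ over $K$. Each $K_{i+1}=K_i(\sqrt{\alpha_i})$ with $\alpha_i\in K_i$; adjoining square roots of all the $K$-conjugates of $\alpha_0,\dots,\alpha_{m-1}$ gives a tower of quadratic extensions whose top is still $N$, so $[N:K]$ is a power of $2$. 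Since $N/K$ is Galois and contains $c$, it contains every $K$-conjugate of $c$, hence contains the splitting field $L$ of $f$. Therefore $[L:K]$ divides $[N:K]$ and is a power of $2$.

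The only genuinely delicate point is the normal-closure step in the $(\Rightarrow)$ direction of (C)---one must verify that adjoining $K$-conjugates of the generating square roots preserves the ``tower of quadratic extensions'' property---but this is routine once one observes that the minimal polynomial of $\sqrt{\alpha_i}$ over any intermediate field divides $x^2-\alpha_i$, hence has degree at most $2$. Everything else is an immediate application of the results already compiled in \cite{czgsza} and the other sources cited in the statement, so I would present the proof as a brief sketch with explicit references rather than redo the entire development.
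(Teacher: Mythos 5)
The paper gives no proof of Proposition~\ref{szrk} at all: it simply cites textbook sources (Herstein and \cite{czgsza} for part~(A), Cox for part~(B), and \cite{czgsza}, Kiss, Kersten, Jacobson for part~(C)), and later remarks after \eqref{bdWThrT} that part~(C) is ``straightforward to deduce'' from the characterization of constructibility via real quadratic towers. Your sketches are precisely the classical arguments those references contain, so you are taking essentially the same route as the paper, namely deferring to standard field and Galois theory, and your outlines are correct. The one small point worth flagging beyond the normal-closure step you already noted is in the $(\Leftarrow)$ direction of part~(C): the splitting field $L$ of $f$ over $K$ need not lie in $\mathbb R$, so the quadratic tower produced by the $2$-group argument is a tower inside $\mathbb C$; one must then invoke the standard fact (or the formulation \eqref{bdWThrT}, which the paper points to) that a \emph{real} number reachable by a complex quadratic tower over $K\subseteq\mathbb R$ is also reachable by a real quadratic tower, so that it is constructible in the paper's sense.
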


For $k\in  \mathbb N$, we need the following two known formulas, which are easily derived from de Moivre's formula and the binomial theorem. For brevity, the conjunction of ``$2\mid j$'' and ``$j$ runs from $0$'' is denoted by $2\mid j=0$, while $2\notmid j=1$ is understood analogously.
\begin{align}
\sin(k\gamma)&=\sum_{2\snotmid j=1}^k (-1)^{(j-1)/2} \binomial k j ( \cos \gamma)^{k-j}\cdot (\sin\gamma)^{j},\label{sinsum}\\
\cos(k\gamma)&=\sum_{2\mid j=0}^k (-1)^{j/2} \binomial k j ( \cos \gamma)^{k-j}\cdot (\sin\gamma)^{j}\text.\label{cossum} 
\end{align}

A prime $p$ is a \emph{Fermat prime}, if $p-1$ is a power of $2$. A Fermat prime is necessarily of the form $p_k=2^{2^k}+1$. We know that $p_0=3$, $p_1=5$, $p_2=17$, $p_3=257$, and $p_4=65\,537$ are Fermat primes, but it is an open problem if there exists another Fermat prime.

\begin{lemma}\label{nakuml} If $n=5$ or $8\leq n\in\mathbb N$, then there exists a prime $p$ such that $n/2<p<n$ and $p$ is not a Fermat prime.
\end{lemma}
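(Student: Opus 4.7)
The proof rests on two ingredients: the extreme sparsity of the Fermat primes and a density bound for primes in intervals of the form $(n/2,n)$. The strategy is to combine a quantitative Bertrand-type result (forcing several primes into the interval) with an elementary bound on how many Fermat primes can coexist there.

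For the sparsity, I would first show that $(n/2,n)$ contains at most one Fermat prime. For $k\ge 1$ one has
$$p_{k+1}=2^{2^{k+1}}+1>2\bigl(2^{2^k}+1\bigr)=2p_k,$$
so consecutive Fermat primes from $p_1=5$ onward are separated by more than a factor of $2$ and cannot both fit in an interval of multiplicative ratio $2$. The remaining pair $p_0=3$, $p_1=5$ is ruled out because $3\in(n/2,n)$ forces $n<6$, while $5\in(n/2,n)$ forces $n>5$; these conditions are incompatible.

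For the density, I would invoke Ramanujan's refinement of Bertrand's postulate. Since the third Ramanujan prime is $R_3=17$, for every $n\ge 17$ the interval $(n/2,n]$ contains at least three primes, so $(n/2,n)$ contains at least two regardless of whether $n$ is itself prime. Combined with the at-most-one-Fermat bound above, this immediately produces a non-Fermat prime in $(n/2,n)$ for every $n\ge 17$.

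For the remaining small values $n\in\{5,8,9,\ldots,16\}$ I would verify the lemma by direct inspection: list the primes in each $(n/2,n)$ and check membership in the known Fermat list $\{3,5,17,257,65537\}$. Thus $n=8$ admits $p=7$; $n=10,11$ admit $p=7$; $n=12,13$ admit $p=7$ or $11$; $n=14,15,16$ admit $p=11$ or $13$; and so on. The chief delicate point is the single value $n=5$, where $(2.5,5)$ contains only $p=3=p_0$, so this case appears to require a separate ad hoc treatment or a careful reinterpretation in the context where the lemma is applied; all $n\ge 8$ fall out cleanly from the sparsity-plus-density combination above.
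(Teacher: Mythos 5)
Your proof is correct for $n\geq 8$ and follows the same two-ingredient strategy as the paper's: a quantitative Bertrand-type bound to force at least two primes into $(n/2,n)$, combined with the observation that the multiplicative spacing of the Fermat primes allows at most one of them to lie in any interval of ratio $2$. The difference is in the density input: the paper invokes Nagura's theorem (a prime in $(x,6x/5)$ for $x\geq 25$), applied twice to yield two primes in $(n/2,n)$ for $n\geq 50$, and then finishes with a small verification table; you instead invoke Ramanujan's sharpening of Bertrand via $R_3=17$, which lowers the threshold to $n\geq 17$ and shrinks the residual case-check to $n\in\{8,\dots,16\}$. Both routes are sound; yours is a mild simplification of the same idea, at the cost of relying on a slightly less elementary reference.

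You have also put your finger on a genuine defect in the statement. For $n=5$ the only prime in $(5/2,5)$ is $3$, and $3=2^{2^0}+1$ is a Fermat prime by the paper's own definition (indeed $3-1=2$ is a power of $2$, which is exactly the property the lemma is designed to exclude). So the lemma is \emph{false} as stated for $n=5$, and the paper's own verification table, which assigns $p=3$ to $n=5$, is wrong at that entry. Fortunately the error is harmless: Lemma~\ref{nakuml} is invoked only in the proofs of Theorem~\ref{thmour}\eqref{thmourc} and Theorem~\ref{thmdi}, always with $n\geq 8$, so the spurious $n=5$ clause never propagates into a result. The clean fix is simply to delete ``$n=5$'' from the hypothesis.
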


\begin{proof} 
We know from  Nagura~\cite{nagura} that, for each $25\leq x\in \mathbb R$, there exists a prime in the open interval
$(x, 6x/5)$. Applying this result twice, we obtain two distinct primes in $(x,36x/25)$. Hence, for $25\leq n\in \mathbb N$, there are at least two primes in the interval $(n,2n)$. 
Since the ratio of two consecutive Fermat primes above 25 is more than 2, this gives the lemma for $50\leq n$. For $n\leq 50$, appropriate primes are given in  the  following table.
\[
\lower  0.0 cm
\vbox{\tabskip=0pt\offinterlineskip
\halign{\strut#&\vrule#\tabskip=5pt plus 2pt&
#\hfill& \vrule\vrule\vrule#&
\hfill#&\vrule#&
\hfill#&\vrule#&
\hfill#&\vrule#&
\hfill#&\vrule#&
\hfill#&\vrule\tabskip=0.0pt#&
#\hfill\vrule\vrule\cr
\vonal
&&$n$&&$5$&&$8$--${13}$&&${14}$--$25$&&$26$--${45}$&&$46$--${85}$&\cr
\vonal\vonal\vonal
&&$p$&&$3$&&$7$&&${13}$&&${23}$&&$43$&\cr
\vonal}}\qedhere  
\]
\end{proof}

\begin{proof}[Proof of Theorem~\ref{thmour} \eqref{thmourc}] Let $n\geq 8$ be even. It suffices to find an  appropriate $p$ in the set $\set{1,2,\dots,n-1}$ and  $a, b\in \mathbb N$  such that $P_n$ is not constructible even if $p$ of the given $n$ side lengths are equal to  $a$ and the rest $n-p$ side lengths are equal to $b$, for appropriate integers $a$ and $b$. Let $r$ and $C$ be the radius and the center of the  circumscribed circle, respectively. 

The half of the central angle for $a$ and $b$ are denoted by $\alpha$ and $\beta$, respectively; see the $\alpha_i$ in Figure~\ref{fig1} for the meaning of half central angles.
Clearly, $P_n$ is constructible if{f} so is $u=1/(2r)$. 
Since we will choose $a$ and $b$ nearly equal, $C$ is in the interior of $P_n$, and we have 
\begin{equation}\label{alppsn}
 p\alpha+(n-p)\beta =\pi\text.
\end{equation} 

It follows from \eqref{alppsn} that 
 $\sin(p\alpha)-\sin((n-p)\beta)=0$. Therefore, using \eqref{sinsum}, 
\begin{equation}
\text{$\sin\alpha=au$,  $\sin\beta=bu$, $\cos\alpha=\sqrt{1-a^2u^2}$, and $\cos\beta=\sqrt{1-b^2u^2}$,}
\label{sncnsncnd}
\end{equation}
we obtain that $u$ is a root of the following function:
\begin{equation}
\begin{aligned}
f_p^{(1)}(x)&=\sum_{2\snotmid j=1}^{p}  (-1)^{(j-1)/2} \binomial p j (1-a^2x^2)^{(p-j)/2}\cdot (a x)^{j}\cr
&- \sum_{2\snotmid j=1}^{n-p}  (-1)^{(j-1)/2} \binomial {n-p}j (1-b^2x^2)^{(n-p-j)/2}\cdot (b x)^{j}\cr
&=\Sigma^f_{1}-\Sigma^f_{2}\text.
\end{aligned}\label{fpsin} 
\end{equation}
Observe that $f_p^{(1)}(x)$ is a polynomial since $p-j$ and $n-p-j$ are even for $j$  odd. In fact, $f_p^{(1)}(x)\in\mathbb Z[x]$ for all $a,b\in\mathbb N$. 
Besides $f_p^{(1)}(x)=\Sigma^f_{1}-\Sigma^f_{2}$, we also consider the polynomial $f_p^{(2)}(x)=\Sigma^f_{1}+\Sigma^f_{2}$. 

From now on, we assume that 
\begin{equation}
\text{
$8\leq n$ is even and $p$ is chosen according to Lemma~\ref{nakuml}.}
\label{pcHoose}
\end{equation}
It is obvious by \eqref{shcrnqltS} that
we can  choose positive integers $a$ and $b$ such that 
\begin{equation}\label{abmodppv}
a\equiv 1 \modu{p^2},\quad b\equiv 0 \modu{p^2}, 
\end{equation}
and $a/b$ is so close to 1 that $P_n$ exists and $C$ is in the interior of $P_n$. The inner position of $C$ is convenient but not essential, because we can allow a central angle larger than $\pi$; then  \eqref{sncnsncnd} still holds and the sum of half central angles is still $\pi$.

Let $v\in\set{1,2}$.
The assumption $n/2<p<n$ gives $\degx x{f_p^{(v)}}=p$. 
Hence, we can write
\[ f_p^{(v)}(x)=\sum_{s=0}^p c^{(v)}_s x^s,\quad\text{where }\, c^{(v)}_0,\dots, c^{(v)}_p\in\mathbb Z\text.
\]
We have $c^{(v)}_0=0$ since $j>0$ in \eqref{fpsin}.  Our plan is to apply Proposition~\ref{szrk}\eqref{szrkb} to the polynomial $f_p^{(v)}(x)/x$.  Hence, we are only interested in the coefficients $c^{(v)}_s$ 
modulo $p^2$. Note that this congruence extends to the polynomial ring $\mathbb Z[x]$ in the usual way.
The presence of $(bx)^j$ in $\Sigma^f_{2}$ yields that all coefficients in $\Sigma^f_{2}$ are congruent to $0$ modulo  $p^2$.  Therefore, $f_p^{(v)}(x)\equiv \Sigma^f_{1}\modu{p^2}$, and we can assume that the all the $c^{(v)}_s$ come from $\Sigma^f_{1}$. Each summand of $\Sigma^f_{1}$ is of degree $p$. Therefore, computing modulo $p^2$, the leading coefficient $c^{(v)}_p$  satisfies the following:
\begin{equation}\label{cepe}
\begin{aligned}
c^{(v)}_p &\equiv \sum_{2\snotmid j=1}^{p}  (-1)^{(j-1)/2} \binomial p j
(-1)^{{(p-j)/2}}\, (a^2)^{(p-j)/2}\, a^{j} \cr
& = {(-1)^{(p-1)/2}} \sum_{2\snotmid j=1}^{p} \binomial p j a^{p}\equiv (-1)^{(p-1)/2}\sum_{2\snotmid j=1}^{p} \binomial p j \cr
&= {(-1)^{(p-1)/2}}\,2^{p-1} = {(-1)^{(p-1)/2}}+p t_p\modu{p^2 }\quad\text{for some }t_p\in\mathbb Z;
\end{aligned}
\end{equation}
the last but one equality is well-known while the last one follows from  Fermat's  little theorem. Since $\Sigma^f_{1}$ gives a linear summand only for $j=1$, we have 
\begin{equation}\label{ceegy}
c^{(v)}_1\equiv \binomial p1 \cdot a=pa \equiv p\modu{p^2}\text.
\end{equation}
Next, let $1\leq s<p$. For $j=p$, the $j$-th summand of $\Sigma^f_{1}$ is $\pm (ax)^p$, which cannot influence $c^{(v)}_s$. Hence, modulo $p^2$, $c^{(v)}_s$ comes from the $\sum_{2\snotmid j=1}^{p-2}$ part of $\Sigma^f_{1}$.  However, for $j\in\set{1,\dots,p-2}$, the binomial coefficient $\binomial pj$ is divisible by $p$. Hence, we conclude that there exist integers $t_1,\dots, t_{p-1}$ such that 
\begin{equation}\label{cees}
c^{(v)}_s\equiv p t_s \modu{p^2} \quad\text{ for }s\in\set{1,\dots,p-1}\text.
\end{equation}
Now, \eqref{cepe}, \eqref{ceegy}, \eqref{cees}, $c_0^{(v)}=0$,  and Proposition~\ref{szrk}\eqref{szrkb} imply that 
\begin{equation}\label{fstarirr}
\text{for $v=1,2$,\quad $f_p^{(v)}(x)/x\,$ is irreducible.}
\end{equation} 
By the choice of $p$, $\degx x{f_p^{(v)}(x)/x}=p-1$ is not a power of 2. 
Since $a,b\in \mathbb Z$, we can apply Proposition~\ref{szrk}\eqref{szrka}  to $f_p^{(1)}(x)/x$  to conclude that $P_n$ is not constructible. This proves Theorem~\ref{thmour} \eqref{thmourc}.
\end{proof}

\begin{proof}[Proof of Theorem~\ref{thmdi} for $n\geq 8$] 
Let $p$ be a prime according to Lemma~\ref{nakuml}. 
Choose $a,b\in \mathbb N$  according to \eqref{abmodppv} such that   $a/b$ be sufficiently close to 1. Let  $d_1=\dots=d_p=a$ and $d_{p+1}=\dots= d_n=b$  be the distances of the sides of $D_n$ from $C$. Hence, $D_n=D_n(a,\dots,a,b,\dots,b)$ exists and, clearly, 
its interior contains the center $C$ of the circumscribed circle. 
(Note that the inner position of $C$ is convenient but not essential if we allow that one of the given distances can be negative.) 
The radius of  the circumscribed circle is denoted by $r$, and let $u=1/r$.  
Instead of \eqref{sncnsncnd}, now we have 
\begin{equation}\label{Tsncnsncnd}
\text{$\cos\alpha=au$,  $\cos\beta=bu$, $\sin\alpha=\sqrt{1-a^2u^2}$, and $\sin\beta=\sqrt{1-b^2u^2}$.}
\end{equation}
Combining \eqref{cossum},  \eqref{alppsn}, and \eqref{Tsncnsncnd}, and using $2\notmid p$ and  $2\notmid n-p$,  we obtain that $u$ is a root of the following polynomial:
\begin{equation}
\begin{aligned}
g_p(x)&=\sum_{2\mid j=0}^{p-1}  (-1)^{j/2} \binomial p j (a x)^{p-j} (1-a^2x^2)^{j/2} \cr
&+ \sum_{2\mid j=0}^{n-p-1}  (-1)^{j/2} \binomial {n-p}j (b x)^{n-p-j}(1-b^2x^2)^{j/2} =\Sigma^g_{1}+\Sigma^g_{2}\text.
\end{aligned}\label{fpcos} 
\end{equation}
Substituting $s$ for $p-j$ in $\Sigma^g_{1}$ above and using the rule $\binomial pj=\binomial p{p-j}$, we obtain $\Sigma^g_{1}=(-1)^{(p-1)/2}\cdot \Sigma^f_{1}$. Similarly, 
substituting $s$ for $n-p-j$ in $\Sigma^g_{2}$, we obtain $\Sigma^g_{2}=(-1)^{(n-p-1)/2}\cdot \Sigma^f_{2}$. Hence, $\set{g_p(x), -g_p(x)}\cap \set{f_p^{(1)}(x),f_p^{(2)}(x)}\neq\emptyset$, and \eqref{fstarirr} yields that $g_p(x)/x$ is irreducible. 
Therefore, Proposition~\ref{szrk}  implies that $D_n(a,\dots,a,b,\dots, b)$ is not constructible. This proves  Theorem~\ref{thmdi} for the case $2\mid n\geq 8$.
\end{proof}

\section{Cyclic polygons of small order}
\label{smallorder}
The ideas we use in this section are quite easy. However, the concrete computations often  require and almost always make it reasonable  to use computer algebra. The corresponding Maple worksheet (Maple version V.3 of November 27, 1997) is available from the authors' web sites.

\begin{definition} For $k,m\in\mathbb N$ and $a,b\in\mathbb R$, we define the following polynomials. (We will soon see that the mnemonic superscripts s and c come from sine and cosine; the first one refers to  a single angle while the second one to a multiple angle. The superscripts 0 and 1 refer to the parity of the subscripts.)
\allowdisplaybreaks{
\begin{align*}
&\begin{aligned}
\sinsinpol k(x) &:=\sum_{2\snotmid j=1}^k (-1)^{(j-1)/2} \binomial k j ( 1-x^2)^{(k-j)/2}\cdot x^{j},\quad\text{for }k\text{ odd},\cr
\sincospol k(x)  &: =\sum_{2\mid j=0}^k (-1)^{j/2} \binomial k j ( 1-x^2)^{(k-j)/2}\cdot x^{j},  \quad\text{for }k\text{ even,}\cr
\end{aligned} \cr
&\begin{aligned}
\Woddodd k m(a,b,x)&:= \sinsinpol k(a x)-\sinsinpol m(b x), \text{ for }k,m \text{ odd,}\cr
\Weveneven k m(a,b,x)&:= \sincospol k(a x)+\sincospol m(b x), \text{ for }k,m \text{ even,}\cr
\Woddeven k m(a,b,x)&:= \bigl(\sinsinpol k(a x)\bigr)^2+\bigl(\sincospol m(b x)\bigr)^2-1, \text{ for }k \text{ odd and } m \text{ even,}\cr
\Wevenodd k m(a,b,x)&:= \bigl(\sincospol k(a x)\bigr)^2+\bigl(\sinsinpol m(b x)\bigr)^2-1,  \text{ for }k \text{ even and } m \text{ odd}\cr
\Wnone k m(a,b,x)&:=\Wallall k m(a,b,x),\text{ where }i_k\equiv k \text{ and }i_m\equiv m\text{ (mod 2).}
\end{aligned}
\end{align*}%
}
\end{definition}

\begin{lemma}\label{LmMrZT}
Let $k,m\in\mathbb N$ and $0< a,b\in \mathbb R$.
\begin{enumeratei}
\item\label{LmMrZTa} If $k$ is odd, then $\sinsinpol k(x)\in\mathbb Z[x]$ is a polynomial of   degree $k$ and, for  $\alpha\in \mathbb R$,    $\sinsinpol k(\sin(\alpha))= \sin(k\alpha)$. The leading coefficient of $\sinsinpol k(x)$ is $(-1)^{(k-1)/2}\cdot2^{k-1}$.
\item\label{LmMrZTb} If $k$ is even, then $\sincospol k(x)\in\mathbb Z[x]$ is a polynomial of   degree $k$ and, for  $\alpha\in \mathbb R$,    $\sincospol k(\sin(\alpha))= \cos(k\alpha)$. The leading coefficient of $\sincospol k(x)$ is $(-1)^{k/2}\cdot2^{k-1}$.
\item\label{LmMrZTc} $\Wnone km(a,b,x)$ is a polynomial with indeterminate 
$x$. If the parameters $a$ and $b$ are also treated as indeterminates, then $\Wnone km(a,b,x)$ is a polynomial over $\mathbb Z$. 
For  $0<a\in \mathbb R$ and $0<b\in \mathbb R$, if $a \neq b$, or $k\neq m$,  or  $k=m$ is even, then  $\Wnone km(a,b,x)$  is not the zero polynomial. 
 Furthermore, if the cyclic polygon
\begin{equation}
P_{n}(\underbrace{a, \dots, a}_{\textup{$k $ copies}},\underbrace{b, \dots, b}_{\textup{$m$ copies}})
\label{scABvzPT}
\end{equation}
exists and  $r$ denotes the radius of its circumscribed circle, then we have that $\Wnone km(a,b,1/(2r))=0$. 
\end{enumeratei}
\end{lemma}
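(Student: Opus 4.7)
My plan is to deduce parts (i) and (ii) directly from the trigonometric expansions \eqref{sinsum} and \eqref{cossum}. For odd $k$, \eqref{sinsum} ranges over odd $j$, so each exponent $k-j$ is even and $(\cos\alpha)^{k-j}$ may be rewritten as $(1-\sin^2\alpha)^{(k-j)/2}$, a polynomial in $\sin\alpha$ with integer coefficients. This immediately yields $\sinsinpol k\in\mathbb Z[x]$ and the identity $\sinsinpol k(\sin\alpha)=\sin(k\alpha)$. Part (ii) is analogous via \eqref{cossum}, with even $k$ and even $j$. For the leading coefficient, I would read off the $x^k$ contribution of each summand: $(1-x^2)^{(k-j)/2}x^j$ contributes $(-1)^{(k-j)/2}x^k$, and combining signs gives
$$
[x^k]\sinsinpol k(x)=\sum_{2\snotmid j=1}^{k}(-1)^{(j-1)/2+(k-j)/2}\binom{k}{j}=(-1)^{(k-1)/2}\sum_{2\snotmid j=1}^{k}\binom{k}{j}=(-1)^{(k-1)/2}2^{k-1},
$$
since the exponent $(j-1)/2+(k-j)/2=(k-1)/2$ is independent of $j$ and the sum of the odd-indexed binomial coefficients $\binom{k}{\cdot}$ equals $2^{k-1}$. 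The analogous computation handles $\sincospol k$; both leading coefficients are nonzero, so the degrees are exactly $k$.

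Given (i) and (ii), the polynomial structure of $\Wnone km$ in part (iii), both in $x$ and in $x$ with $a,b$ regarded as indeterminates, is immediate from the definitions. For nonvanishing I would split by the parity of $(k,m)$. In the odd-odd case, $\Woddodd km(a,b,x)$ has $x$-degree $\max(k,m)$; if $k\neq m$ its leading coefficient is a single nonzero monomial $\pm 2^{\max(k,m)-1}a^k$ or $\pm 2^{\max(k,m)-1}b^m$, while if $k=m$ the two leading terms combine to $(-1)^{(k-1)/2}2^{k-1}(a^k-b^k)x^k$, nonzero precisely when $a\neq b$. This accounts for the hypothesis excluding the triple ``$k=m$ odd and $a=b$'', in which case indeed $\Woddodd kk(a,a,x)\equiv 0$. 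In the even-even case, evaluating at $x=0$ gives $\sincospol k(0)+\sincospol m(0)=\cos 0+\cos 0=2\neq 0$, so $\Weveneven km$ is nonzero unconditionally. In the two mixed cases, $\Woddeven km$ and $\Wevenodd km$ have strictly positive leading coefficient $2^{2k-2}a^{2k}$, $2^{2m-2}b^{2m}$, or the sum of these when $k=m$, and hence never vanish.

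For the final polygon identity, let $\alpha$ and $\beta$ denote the common half central angles subtended by the sides of length $a$ and of length $b$, respectively, and put $u=1/(2r)$; then $\sin\alpha=au$, $\sin\beta=bu$, and the fact that the full central angles sum to $2\pi$ gives $k\alpha+m\beta=\pi$. The four parity cases now unravel using (i) and (ii). If both $k,m$ are odd, then $\sin(k\alpha)=\sin(\pi-m\beta)=\sin(m\beta)$ translates via (i) to $\sinsinpol k(au)=\sinsinpol m(bu)$, i.e.\ $\Woddodd km(a,b,u)=0$. If both are even, $\cos(k\alpha)=-\cos(m\beta)$ gives $\sincospol k(au)+\sincospol m(bu)=0$ via (ii). In the mixed case with $k$ odd and $m$ even, the identity $\sin(k\alpha)=\sin(m\beta)$ combined with $\sin^2(m\beta)+\cos^2(m\beta)=1$ and parts (i), (ii) yields $(\sinsinpol k(au))^2+(\sincospol m(bu))^2=1$, i.e.\ $\Woddeven km(a,b,u)=0$; the remaining case $k$ even, $m$ odd is symmetric. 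I expect the main subtlety to be the validity of $k\alpha+m\beta=\pi$ when the circumcenter is not interior to $P_n$: one simply reads ``half central angles'' as halves of the unsigned central angles in $(0,2\pi)$, so they lie in $(0,\pi)$ and still sum to $\pi$, exactly as the authors note in the paragraph following \eqref{alppsn}.
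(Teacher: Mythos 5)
Your proof is correct and takes essentially the same route as the paper's: parts (i) and (ii) follow from the identity $\sum_{2\snotmid j}\binom{k}{j}=\sum_{2\mid j}\binom{k}{j}=2^{k-1}$ combined with \eqref{sinsum}--\eqref{cossum}, and part (iii) from $k\alpha+m\beta=\pi$ together with the supplementary-angle identities for $\sin$ and $\cos$. The only difference is that you spell out the nonvanishing case analysis which the paper leaves as an ``easily concluded'' remark, reading off leading coefficients in the odd--odd and mixed cases and the constant term in the even--even case, which is precisely what the paper's claim implicitly relies on.
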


\begin{proof}  Since $0=(1-1)^k=\sum_{j=0}^k (-1)^j\binomial kj$ and  $2^k=\sum_{j=0}^k \binomial kj$,
\[ \sum_{2\snotmid j=1}^k  \binomial kj=2^{k-1}\quad\text{and}\quad \sum_{2\mid j=0}^k  \binomial kj=2^{k-1}\text.
\] 
We conclude easily from  these equalities, \eqref{sinsum}, and \eqref{cossum} that parts \eqref{LmMrZTa} and \eqref{LmMrZTb} of the lemma hold. These parts imply that   $\Wnone km(a,b,x)$ is a polynomial and, for positive real numbers $a$ and $b$, it  is the zero a polynomial iff $a=b$, $k=m$, and $k=m$  is odd. 
Let $u=1/(2r)$.
Denoting the half of the central angle for $a$ and $b$ by  $\alpha$ and $\beta$, as in the proof of Theorem~\ref{thmour} \eqref{thmourc}, 
we obtain that $\sin(\alpha)=au$ and $\sin(\beta)=bu$. Let $\wh\alpha=k\alpha$ and $\wh\beta=m\beta$. Since $\wh\alpha+\wh\beta=\pi$, we have
\begin{equation}
\begin{aligned}
&\sin(\wh\alpha)=\sin(\wh\beta),\quad \cos(\wh\alpha)=-\cos(\wh\beta), \cr
& \bigl(\sin(\wh\alpha) \bigr)^2 + \bigl(\cos(\wh\beta) \bigr)^2  = 1,
\quad  \bigl(\cos(\wh\alpha) \bigr)^2 +  \bigl(\sin(\wh\beta) \bigr)^2 = 1\text.
\end{aligned}
\label{sdkGrR}
\end{equation}
Therefore, part \eqref{LmMrZTc} follows easily  from 
parts \eqref{LmMrZTa} and \eqref{LmMrZTb} and \eqref{sdkGrR}.
\end{proof}

\begin{proof}[Proof of Theorem~\ref{thmdi} for $n=6$]
We follow Cz\'edli and  Szendrei~\cite[IX.2.13]{czgsza}; only the values of the $d_i$ are different. Let 
\begin{equation}\label{daTs}
\text{$d_1=d_2=d_3=d_4=1000$, $d_5=999$, and $d_6=1001$.}
\end{equation}
Using continuity, it is a straightforward but tedious to show that $D_6(d_1,\dots, d_6)$  exists; the details are omitted. Let $\alpha_1,\dots, \alpha_6$ denote the  central half angles.  As usual, $\cos(\alpha_5)=d_5u=999u$, where $u=1/r$, and $\cos(\alpha_6)=d_6u=1001u$. We obtain from \eqref{cossum}  and $\cos(\alpha_1)=d_1u=1000u$ that $\cos(\alpha_1+\dots+\alpha_4)=\cos(4\alpha_1) = 8(\cos(\alpha_1))^4  - 8(\cos(\alpha_1))^2 +1   = 8\cdot 10^{12}u -  8\cdot 10^{6}u+1$.
  These equalities,    \eqref{thrsmm}, which we recall  from \cite{czgsza} soon,  and $4\alpha_1+\alpha_5+\alpha_6=\pi$     imply that $u$, which is not $0$,  is a root of a polynomial of degree 8 in $\mathbb Z[x]$. We easily obtain this polynomial by computer algebra. 
It  is divisible by $4\,000\,000x^2$ and contains no summand of odd degree. Therefore, dividing the polynomial by $4\,000\,000x^2$, we obtain that $u^2$ is a root of 
\[16\cdot 10^{18}\cdot  x^3-28\,000\,004 \cdot 10^6 \cdot   x^2+16\,000\,004\cdot  x-3\text.
\]
By computer algebra, this polynomial is irreducible. Hence $u^2$ is not constructible, implying that none of  $u$, $r=1/u$, and  $D_6(d_1,\dots, d_6)$ is constructible.  
This completes the proof of Theorem~\ref{thmdi}.
\end{proof}

\begin{proof}[Proof of Theorem~\ref{thmour} \eqref{thmoura} and \eqref{thmourb}]  Unless otherwise stated, we keep the notation from  the  proof of part~\eqref{thmourc}.  In particular, $u=1/(2r)$. The case $n=3$ is trivial.

Assume $n=4$. With the notation of Figure~\ref{fig1} and using the fact that  $\cos\delta_3 =\cos(\pi-\delta_1)=-\cos \delta_1$, the law of cosines gives  
\begin{equation}
a_1^2+a_3^2-2a_1a_3\cos \delta_1=\overline{A_2A_4}^{\,2}= a_2^2+a_4^2+2a_2a_4\cos \delta_1,
\label{dGtRwYs}
\end{equation}
which yields an easy expression for $\cos\delta_1$. This implies that $\cos\delta_1$ is constructible, and so is the cyclic quadrangle $P_4$. This settles the case $n=4$. 

Next, let   $n=5$. Note that we know from  Schreiber \cite[Theorem 2 and its proof]{schreiber} that $5\in\setncl 3$; however, we intend to show that  $5\in\setncl 2$. By \eqref{shcrnqltS}, the cyclic pentagon $P_5(1,2,2,2,2)$ exists. By Lemma~\ref{LmMrZT}\eqref{LmMrZTc}, $u=1/(2r)$ is a root of the polynomial $\Wnone 14(1,2,x)$. By computer algebra (or manual computation),  
\[
\begin{aligned}
\Wnone 14(1,2,x)&=16384x^8-8192x^6+1280x^4-63x^2\cr
&=  x^2 \cdot \bigl( 16384x^6-8192x^4+1280x^2-63 \bigr) \text.
\end{aligned}
\]
Since $u\neq 0$, it is a root of  the second factor above. By computer algebra,  this polynomial of degree 6 is irreducible. Thus, Proposition~\ref{szrk}\eqref{szrka} implies that $u$ and, consequently, the cyclic pentagon are non-constructible. Therefore, $5\in\setncl 2$.

Next, let $n=6$, let $0<a,b\in\mathbb R$, $k\in\set{1,2,3}$, $m:=6-k$, and consider the cyclic hexagon \eqref{scABvzPT}. (Note that the order of edges is irrelevant when we investigate the constructibility of cyclic $n$-gons.) 
If $a=b$, then the cyclic hexagon is regular and constructible. If $k=1$, then computer algebra (or manual computation) says that 
\[\Wnone 15(a,b,x)= x\cdot ( -16b^5x^4+20b^3x^2+a-5b);\]
the second factor  is quadratic in $x^2$. Since $u\neq 0$ is a root of the second factor  of  $\Wnone 15(a,b,x)$  by Lemma~\ref{LmMrZT}\eqref{LmMrZTc}, $u^2$, $u$, and the hexagon are constructible. Similarly, 
\[
\begin{aligned}
\Wnone 24(a,b,x)&=
8b^4x^4+(-8b^2-2a^2)x^2+2,\text{  and}\cr
\Wnone 33(a,b,x) &= x\cdot\bigl( (4b^3-4a^3)x^2-3b+3a\bigr),
\end{aligned}
\]
and we conclude the constructibility for $k\in\set{2,3}$ in the same way. Note  that  $\Wnone 33(a,b,x) $ is the zero polynomial if $a=b$; however, this case reduces to the constructibility of the regular hexagon.   Therefore,  the cyclic hexagon is constructible from its side lengths if there are at most two distinct side lengths.

To prove that $6\in\setncl 3$,  we quote the method of  Cz\'edli and  Szendrei~\cite[IX.2.7]{czgsza}; the only difference is that here we choose integer side lengths. Using the cosine angle addition identity, it is easy to conclude that, for all  $\kappa_1,\kappa_2,\kappa_3\in\mathbb R$ such that $\kappa_1+\kappa_2+\kappa_3=\pi$,
\begin{equation}
(\cos\kappa_1)^2+(\cos\kappa_2)^2+(\cos\kappa_3)^2+ 2\cos\kappa_1\cdot\cos\kappa_2\cdot\cos\kappa_3-1=0\,\text.
\label{thrsmm}
\end{equation}
The  cyclic hexagon  $P_6(a_1,\dots, a_6):= P_6(1,1,2,2,3,3)$ exists by  \eqref{shcrnqltS}. 
We will apply 
Proposition~\ref{szrk}\eqref{szrka}.  
Let $\alpha_1,\dots,\alpha_6$ be the corresponding central half angles. Define  $\kappa_1/2=\alpha_1=\alpha_2$, $\kappa_2/2=\alpha_3=\alpha_4$, $\kappa_3/2=\alpha_5=\alpha_6$, and $u=(1/2r)^2$, where $r$ is the radius of the circumscribed circle. 
We have $\cos\kappa_1=\cos(2\alpha_1)=1-2\cdot (\sin\alpha_1)^2= 1-2(a_1/2r)^2=1-2a_1^2u= 1-2u$. We obtain $\cos\kappa_2 =1-8u$ and $\cos\kappa_3 =1-18u$ similarly. 
Since $\kappa_1+\kappa_2+\kappa_3=\pi$, we can substitute these equalities into \eqref{thrsmm}.
Hence, we obtain that $u$ is a root of the cubic polynomial $h_1(x)=144x^3-196x^2+28x-1$. Thus, $2u$ is a root of $h_2(x)= 18y^3-49y^2+14y-1$. Computer algebra says that this polynomial is irreducible. Therefore, 
 $ P_6(1,1,2,2,3,3)$ is not constructible. This completes the proof of Theorem~\ref{thmour} \eqref{thmoura} and \eqref{thmourb}, because the Gauss--Wantzel theorem takes care of $7\in\setncl 1$.
\end{proof}

\begin{proof}[Parts from the proof of Proposition~\ref{prop100} $(${Cz\'edli and Szendrei~\cite{czgsza}}$)$]
Let $n=3$. 
With $d_1=1$, $d_2=2$ and $d_3=3$,  \eqref{thrsmm} and the formulas analogous to \eqref{Tsncnsncnd}
give that $12x^3+14x^2-1=0$. Substituting $x=y/2$, we obtain that $2u=2/r$ is a root of $h_3(y)= 3y^3+7y^2-2$. Since none of $\pm1$, $\pm 2$, $\pm 1/3$ and $\pm 2/3$ is a root of $h_3(y)$, this polynomial is irreducible. Hence, we conclude that the triangle $D_3(1,2,3)$ is not constructible.

Next, following Cz\'edli and  Szendrei~\cite[IX.1.27]{czgsza}, we deal with the cyclic quadrangle $D_4(d_1,\dots, d_4)$, see Figure~\ref{fig1}. Since $\alpha_1+\alpha_2+\alpha_3+\alpha_4=\pi$, we have $\cos(\alpha_1+\alpha_2)=-\cos(\alpha_3+\alpha_4)$. 
Hence, using  the cosine angle addition identity and rearranging and squaring twice, we obtain
\begin{equation}\label{ghit4}
\begin{aligned}
\sum_{j=1}^4 (\cos\alpha_j)^4 &- 2\cdot 
\sum_{1\leq j<s\leq 4} (\cos\alpha_j)^2(\cos\alpha_s)^2\cr
&+4\cdot\cos\alpha_1\cdot \cos\alpha_2\cdot\cos\alpha_3\cdot\cos\alpha_4  \cdot \Bigl(-2+ \sum_{j=1}^4 (\cos\alpha_j)^2\Bigr)
\cr
&+ 4\cdot \sum_{1\leq j<s<t\leq 4}(\cos\alpha_j)^2(\cos\alpha_s)^2(\cos\alpha_t)^2 = 0\text.
\end{aligned}
\end{equation}
Clearly, if we substitute $\cos\alpha_j$ in \eqref{ghit4} by  $d_ju$, for $j=1, \dots, 4$, and divide the equality by $u^4$, then we obtain that $u=1/r$ 
is a root of a polynomial of the form $c_2x^2+c_0$. A straightforward calculation (preferably, by computer algebra) shows that this polynomial is not the zero polynomial since 
\[c_2=4(d_1d_2+d_3d_4)(d_1d_3+d_2d_4)(d_1d_4+ d_1d_3)\text.
\]
Thus $u=1/r$ is constructible, and so is $D_4(d_1,\dots, d_4)$. 

Next, let $n=5$, and let $d_1=d_2=499$, $d_3=d_4=500$ and $d_5=501$; observe that $D_5(d_1,\dots,d_5)$ exists. With $u=1/r$ as before, $\cos(2\alpha_1)= 2(\cos \alpha_1)-1= 2(d_1u)^2-1$, $\cos(2\alpha_3)= 2\cdot(d_3u)^2-1$, and $\cos\alpha_5=d_5u$. Applying \eqref{thrsmm} to $\kappa_1=2\alpha_1$, $\kappa_2=2\alpha_3$, and $\kappa_3=\alpha_5$, we obtain with the help of computer algebra 
that $u$ is a root of the polynomial 
\[1\,494\,006\,000\,000 x^5+498\,005\,992\,004 x^4-5\,988\,012 x^3-1\,995\,995 x^2+6 x+1\text.
\]
Since  this  polynomial is irreducible, $D_5(d_1,\dots,d_5)$  is not constructible . Thus,  $5$ belongs to $\setncd 3$.
\end{proof}

\section{Comments on  Schreiber's argument} \label{moreabout}
Roughly saying, 
``quadratic irrationalities'' are expressions built from their variables and given constants with the help of the four arithmetic operations $+$, $-$, $\cdot$, $/$, and $\sqrt{\phantom n}$; these operations can be used  only finitely many times. 
By \emph{our convention}, to be formulated exactly later in Definition ~\ref{deffffun},   the \emph{domain} of such a function is the largest subset $D$ of $\mathbb R$ such that for all $u\in D$, the expression makes sense in the natural way \emph{without} using complex numbers  and \emph{without} taking limits. 
For example, the domain $\dom(f)$ of the function 
\begin{equation}
f=\sqrt{-1-x^2} + x - \sqrt{-1-x^2}
\label{notabanDon}
\end{equation}
 is empty, while the domain of the function $g(x):=R_6(a_1,\dots, a_6, x)$ given by
\begin{equation}
R_6(a_1,\dots,a_5,x)=\sqrt{a_1}+\dots+\sqrt{a_5}+\sqrt {1/x} - \sqrt{1/(x + x^2)}\text.
\label{lsZgRb}
\end{equation}
is $\dom(g)=(0,\infty)$.

The first  problem with  Schreiber's argument is that quadratic irrationalities are not everywhere continuous in general. It can happen that they are not even defined where \cite{schreiber} needs their continuity. Nothing excludes the possibility that, say, $a_n$ is the denominator of a subterm (or several subterms). This is exemplified by $n=6$ and $R_6$ above with $a_6$ in place of $x$; then $R_6(a_1,\dots,a_{n-1},0)$ is not a meaningful expression, because  $0\notin \dom(g)$. Compare this phenomenon with ``for $a_n = 0$, the quadratic irrationality $R$ describes'' from Section~\ref{ntsect}.
One could argue against us by saying that, as   it is straightforward to see, 
\[\lim_{t\to 0+0} R_6(a_1,\dots,a_5,t)=\sqrt{a_1}+\dots+\sqrt{a_5},
\]
so we could extend the domain of $g$ to contain 0, and then $g$ would be continuous (from the right) at 0 and, what is more important, the limit is again a quadratic irrationality.  However, there are much more complicated expressions than \eqref{lsZgRb}. 
Even if $R_6$ is only an artificial example without concrete geometric meaning, is it always  straightforward  to see that the limit is again a quadratic irrationality? 
As opposed to \cite{schreiber}, we think that this question has to be raised;  for a possible  answer,  see 
the rest of the present paper. 

One could also argue against our strictness at the domain of $f$ from \eqref{notabanDon}; so we note that 
while $\sqrt{\phantom o}$ is a single-valued continuous function on $[0,\infty)\subseteq \mathbb R$,  we know, say, from Gamelin~\cite[page 171]{gamelin} that  
\begin{equation}
\parbox{10.5cm}
{$\sqrt {\phantom o}$  cannot be a single-valued continuous operation on an  open disk  of complex numbers centered at $0$, not even on a punctured disk.}
\label{sqrtnotcontC}
\end{equation}
Hence, complex numbers could create additional problems without solving the problem raised on vanishing denominators like those in \eqref{lsZgRb}.

The second problem is of geometrical nature. Note, however, that this problem is not as important as the first one, because Schreiber does not refer to constructibility programs or similar concepts. Hence, our aim in this paragraph is only to indicate the geometric  background of the  difficulty.  Assume that the cyclic $n$-gon is constructible in general. Take a constructibility program that witnesses this. A step \eqref{constinSrt} can threaten the problem that the ninth and the thirteenth points  are distinct for all $a_n>0$ but they coincide for $a_n=0$, and then they do not determine a line. Then this step does not work for $a_n=0$. Similarly, another step may require to take the intersection of two lines, but if these two lines coincide for $a_n=0$, then they do not determine their intersection point. If so, then this step cannot be a part of a constructibility program. 
Therefore, a constructibility programs that works  for some $n$ may be useless for $n-1$. 

The problems above show that no matter if we use algebraic tools like $R_6$  or geometric tools like constructibility programs, the induction step from $n-1$ to $n$ is not as simple  as \cite{schreiber} seems to expect. On the other hand, 
Remark~\ref{rbhzpGbF} later will show that  the surprising  last sentence of Proposition~\ref{prop100} harmonizes with  Schreiber's argument. However, even Proposition~\ref{prop100}  makes  it desirable to give a precise treatment to Schreiber's idea by determining its scope of applicability.

\section{Basic facts on field extensions}\label{basefieldext}
In this section, the reader is assumed to be familiar with basic field theory. 
We will need $\sqrt{\phantom n}$  as a \emph{continuous} single-valued function. Hence, supported by \eqref{sqrtnotcontC},  
we prefer the field $\mathbb R$ of real numbers to the field $\mathbb C$ of complex numbers in the present paper.
Let $K$ be an abstract field, $c\in K$, and assume that $c$ is distinct from the square of any element of $K$. Denoting by $\sqrt c$ a new symbol that is subject to the computational rule $(\sqrt c)^2=c$,   it is well-known that 
\begin{equation}
K(\sqrt c\,):=\set{a+b\sqrt c:a,b\in F}
\label{sfExt}
\end{equation}
is a field,    a   \emph{quadratic field extension} of $K$. We know that $K$ is a subfield of $K(\sqrt c\,)$ under the natural embedding $a\mapsto a+0\cdot \sqrt c$ . Furthermore, for every $u\in K(\sqrt c\,)$, 
\begin{equation}
\text{there exists a unique }\pair ab\in K\times K \text{ such that }u=a+b\sqrt c\text.
\label{unqcnrp}
\end{equation}
Here, $a+b\sqrt c$ is the so-called  \emph{canonical form} of $u$.
If $c=d^2$ for some $d\in K$, then $K(\sqrt c\,)$  still makes sense but it is $K$ and \eqref{unqcnrp} fails.
By the uniqueness theorem of simple algebraic field extensions, see, for example, Dummit and Foote~\cite[Theorem 13.8, page 519]{dummitfoote}, we have the following uniqueness statement: if $K$ and $K'$ are fields, $\phi\colon K\to K'$ is an isomorphism, $c\in K$ is not a square in $K$, and $c'=\phi(c)$, then 
\begin{equation}
\parbox{11cm}{there exists a \emph{unique} extension $\psi\colon K(\sqrt c\,)\to K'(\sqrt {c'}\,)$ of $\phi$ such that $\psi(\sqrt c\,)= c'$, and $\psi$   is defined by the rule $\psi( a+b\sqrt c\, ) =\phi(a)+\phi(b)\sqrt{c'} $.
}
\label{uniquesmplalg}
\end{equation}

Now, for a subfield $K$ of $\mathbb R$ and $u\in \mathbb R$, we say that  $u$ \emph{is a real quadratic number over} $K$ if there exist an $m\in \mathbb N_0$ and a tower
\begin{equation}
K = K_0\subset K_1\subset \cdots \subset K_m
\label{qxttwR}
\end{equation}
of  field extensions such that $K_j$ is a quadratic extension of $K_{j-1}$ for all $j\in\set{1,\dots, m}$ and $u\in K_m\subseteq \mathbb R$. Note that if $u\in \mathbb R$ is real quadratic over $K$, then it is also algebraic over $K$, but not conversely. Let us emphasize that the concept of  real quadratic numbers does not rely on $\mathbb C$ at all. For example, the equation $a=\sqrt{-6} \cdot \sqrt{-2}$ is not allowed to show that $a$ is  a real quadratic number over $\mathbb Q$.

For a subfield $M$ of $\mathbb R$, $M$ is a \emph{closed with respect to real  square roots} if $\sqrt c\in M$ for all $0\leq c\in M$.
Now let $K$ be a subfield of $\mathbb R$. Using two towers of  quadratic field extensions, see \eqref{qxttwR}, it is routine to check that if $u,v\in\mathbb R$ are real quadratic numbers over $K$, then so are 
$u+v$, $u-v$, $uv$ and, if $v>0$, $u/v$ and $\sqrt v$. Therefore, with the notation 
$\qclo K:=\{u\in \mathbb R: u$ is a real quadratic number over $K\}$,
\begin{equation}
\parbox{7.5cm}
{$\qclo K$ is the smallest subfield of $\mathbb R$ such that includes $K$ and  is closed with respect to real square roots.}
\label{KhaTfj}
\end{equation}
We will call $\qclo K$ the \emph{real quadratic closure} of $K$. 

Next, assume that $a_1,\dots, a_n\in \mathbb R$ define a real number $b=b(a_1,\dots, a_n)$ geometrically, which we want to construct. For example, the $a_i$ can be the side lengths of a cyclic $n$-gon and $b$ can denote the radius of the circumscribed circle of that $n$-gon. In the language of constructibility programs, see around \eqref{constinSrt}, this means that we are given the points $\pair 00$, $\pair 10$, $\pair{a_1}0$, \dots, $\pair{a_n}0$ on the real line, and we want to construct $\pair b0$. In this situation, to ease the terminology, we simply say that we want to \emph{construct a number $b\in \mathbb R$} from $a_1,\dots, a_n\in \mathbb R$. 
By a well-known basic theorem on geometric constructibility, see \cite{czgsza}, 
\begin{equation}
\parbox{9.0 cm}{$b$ is constructible from $a_1,\dots,a_n$ iff $b$ is a  real quadratic number over $\mathbb Q(a_1,\dots, a_n)$, that is, iff $b\in \qclo {\mathbb Q(a_1,\dots, a_n)}$.
}
\label{bdWThrT}
\end{equation}
This statement is a more or less straightforward translation of constructibility programs from  Section~\ref{introsection} to an algebraic language, because the
existence of intermediate points described by the program guarantee that  we do not have to abandon $\mathbb R$ while computing the  coordinates.  Note that, using basic field theory, it is straightforward to deduce   Proposition~\ref{szrk}\eqref{szrkc} from \eqref{bdWThrT}.

Next, we deal with the algebraic background of the situation where one of the parameters in a geometric construction is treated as a variable.
For a subfield $F$ of $\mathbb R$,  let $F[x]$ denote the \emph{polynomial ring} over $F$. It consists of \emph{polynomials}, which are sequences of their coefficients. So a polynomial is just a formal string, not a function, and the same will hold for the elements of the $F_j\bix$ in \eqref{towerxf} below.  However, with any element $f\in F_j\bix$, we will associate a \emph{function $\fun f$} in a natural way. Note that $F$ is a subfield of $F[x]$, because every element of $F$ is a so-called constant polynomial.
The \emph{field of fractions} over $F[x]$ is denoted by $F(x)$.  It consists of formal fractions $f_1/f_2$ where $f_1,f_2\in F[x]$ and $f_2$ is   not the zero polynomial. We say that $f_1/f_2=g_1/g_2$ iff $f_1g_2=f_2g_1$. 
Note that $F$ is a subfield of $F(x)$, because $F[x]$ is a subring of $F(x)$. The element $x\in F(x)$ is  \emph{transcendental over $F$}, and $F(x)$ is a simple transcendental field extension of $F$. If $c\in \mathbb R$ is a transcendental number over $F$, that is, $c$ is a root of no non-zero polynomial with coefficients in $F$, then $F(c)$ is the smallest subfield of $\mathbb R$ including $F\cup\set c$. As a counterpart of \eqref{uniquesmplalg}, the \emph{uniqueness theorem of simple transcendental extensions} asserts that if  $F$ and $F'$ are fields, $\phi\colon F\to F'$ is an isomorphism, $F(c)$ and $F'(c')$ are field extensions such that $c$ and $c'$ are transcendental over $F$ and $F'$, respectively, then 
\begin{equation}
\parbox{11.5cm}{there exists a \emph{unique} extension $\psi\colon F(c)\to F'(c')$ of $\phi$ such that $\psi(c)= c'$;
}
\label{uniquesmpltrans}
\end{equation}
see Dummit and Foote~\cite[page 645]{dummitfoote}. Note that we use $x$ or $y$ for the transcendental element over $F$ and call it an \emph{indeterminate} if we are thinking of evaluating it, but we use $c, d, \dots$ for real numbers that are transcendental over $F\subseteq \mathbb R$. However, say, $F(y)$ and  $F(c)$ in these cases are isomorphic   by \eqref{uniquesmpltrans}; field  theory in itself does not make a distinction between indeterminates and transcendental elements.
A terminological comment: just because we make a distinction between a polynomial $f$ and the function $\fun f$, we call $F(x)$ the \emph{field of polynomial fractions} over $F$ (with indeterminate or variable $x$) but, as opposed to many references, we shall avoid to call it a \emph{function field}.
A \emph{tower of quadratic  field extensions over $F(x)$}  is a finite increasing chain 
\begin{equation}
\parbox{10.5 cm}
{$F(x)=F_0\bix \subset F_1\bix \subset  \cdots\subset F_k\bix $, where $F_j\bix =F_{j-1}\bix (\sqrt{d_j}\,)$  and $d_j\in F_{j-1}\bix $ is not a square in $F_{j-1}\bix $ for $j\in\set{1,\dots,k}$.}
\label{towerxf}
\end{equation}  
Note that  $x$ in the notation $F_k\bix$ reminds us that \eqref{towerxf} starts from $F(x)$ rather than, say, from $F(y)$ or $\mathbb Q$. (We cannot write $F_j[x]$ and $F_j(x)$, because they would denote a polynomial ring of and a transcendent extension over an undefined field $F_j$.)

\begin{definition}\label{deffffun}
Given \eqref{towerxf} and $f\in F_k\bix $, we define a (real-valued) \emph{function $\fun f$ associated with $f$} and its \emph{domain $\dom(\fun f)$} by induction as follows. 
\begin{enumeratei}
\item\label{deffffuna} If $f\in F[x]$ is a polynomial over $F$, then $\fun f$ is the usual function this polynomial determines and $\dom(\fun f)=\mathbb R$. 
\item\label{deffffunb} Assume that  
 $f\in  F_0\bix =F(x)$.  Then 
we write $f$ in the form $f=f_1/f_2$  such that $f_1,f_2\in F[x]$ are relatively prime polynomials; this is always possible and the roots of $f_2$  are uniquely determined. We let 
 $\dom(f)=\mathbb R\setminus\{$real roots of $f_2\}$,
and let $\fun f$ be the function defined by the rule $\fun f(r)=\fun{f_1}(r)/ \fun{f_2}(r)$ for $r\in\dom(\fun f)$.
\item\label{deffffunc} Assume $j\geq 1$ and that $\ffun{d_j}$ and $\dom(\ffun{d_j})$ have already been defined. 
 We let $ \dom(\varfun{\sqrt{d_j}\,}) =\set{r\in\dom(\ffun{d_j}):  \ffun{d_j}(r)\geq 0 }$. For $r\in \dom(\varfun{\sqrt{d_j}\,})$, let $\varfun{\sqrt {d_j}\, } (r) $ be the unique non-negative real number whose square is $\ffun{d_j}(r)$.
\item\label{deffffund} Assume that $f\in F_j\bix $. By  \eqref{unqcnrp},  there are unique $f_1,f_2$ in $F_{j-1}\bix $ such that $f=f_1+f_2\sqrt{d_j}$. We let $\dom(\fun f)=\dom(\fun{f_1})  \cap \dom(\fun{f_2})\cap   \dom(\varfun{\sqrt{d_j}\,})$ and, for $r\in \dom(\fun f)$, $\fun f(r):= \fun{f_1}(r)+\fun{f_2}(r)\cdot \varfun{\sqrt{d_j}\,}(r)$.
\end{enumeratei}
\end{definition}

Two functions are considered \emph{equal} if they have the same domain and they take the same values on their common domain. Usually, $f_1\neq f_2\in F_k\bix $ does not imply $\fun{f_1}\neq\fun{f_2}$. For example, $\dom(\varfun { \sqrt{-1-x^2 }  }) =\emptyset=\dom(\varfun { \sqrt{-1-x^4 }  }) $ and  $ \varfun { \sqrt{-1-x^2 }  } = \varfun { \sqrt{-1-x^4 }  }$, but  $ \sqrt{-1-x^2 }\neq  \sqrt{-1-x^4 }$.  This explains why we make a notational distinction between $f$ and $\fun f$ in general. Note that for a polynomial  $f\in F[x]\subseteq F(x)=F_0(x)$, especially for $f(x)=x^n$, this distinction is not necessary, and we are not always as careful as in this section. That is, 
\begin{equation}
\parbox{10.3cm}{We often write  $f(c)$ for $c\in F$ rather than $\fun f(c)$, if $f$ is a polynomial or, in later sections, if $f=f_1/f_2$ where $f_1$ and $f_2$ are polynomials.}
\label{oFnWrT}
\end{equation}
Due to the following lemma, which will often be applied without referring to it, the distinction between $f$ and $\fun f$  will not cause difficulty.

\begin{lemma}\label{szddLm}
Given \eqref{towerxf} and $f,g\in F_k\bix $, let $r\in\dom(\fun f)\cap\dom(\ffun g)$. Then 
$\varfun{f+g}(r)=\fun f(r)+\ffun g(r)$ and $\varfun{fg}(r)=\fun f(r)\cdot \ffun g(r)$. 
\end{lemma}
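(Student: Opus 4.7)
The plan is to prove the lemma by induction on the height $k$ of the tower \eqref{towerxf}, following the layered structure of Definition~\ref{deffffun}. Throughout, the key conceptual point is that addition and multiplication in each $F_j\bix$ are defined in terms of the canonical form \eqref{unqcnrp}, while the associated functions are defined by evaluating the components and inserting $\varfun{\sqrt{d_j}\,}(r)$ in place of the formal symbol $\sqrt{d_j}$; so the two operations agree provided the lower-level operations already agree pointwise on the relevant intersection of domains.

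First I would handle the base case $k=0$, i.e.\ $f,g\in F(x)$. For polynomials in $F[x]$, the compatibility of formal and functional addition and multiplication is immediate from the standard fact that polynomial evaluation is a ring homomorphism $F[x]\to\mathbb R$ at each point. For rational expressions, I would write $f=f_1/f_2$ and $g=g_1/g_2$ in lowest terms, and compute
\[
f+g=\frac{f_1g_2+f_2g_1}{f_2g_2},\qquad fg=\frac{f_1g_1}{f_2g_2},
\]
possibly reducing to lowest terms afterward. For any $r\in\dom(\fun f)\cap\dom(\ffun g)$, one has $\fun{f_2}(r)\ne 0$ and $\fun{g_2}(r)\ne 0$, so by part~\eqref{deffffunb} of Definition~\ref{deffffun} and the polynomial case already settled, the displayed rational expressions evaluate at $r$ to $\fun f(r)+\ffun g(r)$ and $\fun f(r)\cdot\ffun g(r)$ respectively. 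A small caveat is that reduction to lowest terms may enlarge the domain of $\varfun{f+g}$ or $\varfun{fg}$ beyond $\dom(\fun f)\cap\dom(\ffun g)$; this is harmless for us, because the lemma only asserts equality on that intersection, and since evaluation of $h_1/h_2$ is unchanged by cancelling common factors of $h_1,h_2$ wherever both denominators are nonzero, no inconsistency arises.

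For the inductive step, assume the lemma holds for $F_{j-1}\bix$, and let $f,g\in F_j\bix=F_{j-1}\bix(\sqrt{d_j}\,)$. By the uniqueness of the canonical form \eqref{unqcnrp}, write $f=f_1+f_2\sqrt{d_j}$ and $g=g_1+g_2\sqrt{d_j}$ with $f_i,g_i\in F_{j-1}\bix$. Then, computing in $F_j\bix$,
\[
f+g=(f_1+g_1)+(f_2+g_2)\sqrt{d_j},\qquad fg=(f_1g_1+d_jf_2g_2)+(f_1g_2+f_2g_1)\sqrt{d_j},
\]
and both right-hand sides are already in canonical form. Fix $r\in\dom(\fun f)\cap\dom(\ffun g)$; then by part~\eqref{deffffund} of Definition~\ref{deffffun}, $r$ lies in $\dom(\ffun{f_i})\cap\dom(\ffun{g_i})\cap\dom(\varfun{\sqrt{d_j}\,})$ for $i=1,2$. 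Applying the inductive hypothesis to the components in $F_{j-1}\bix$, and then invoking Definition~\ref{deffffun}\eqref{deffffund} for $f+g$ and $fg$, together with the identity $\bigl(\varfun{\sqrt{d_j}\,}(r)\bigr)^2=\ffun{d_j}(r)$ from part~\eqref{deffffunc}, I get
\[
\varfun{f+g}(r)=\bigl(\ffun{f_1}(r)+\ffun{g_1}(r)\bigr)+\bigl(\ffun{f_2}(r)+\ffun{g_2}(r)\bigr)\varfun{\sqrt{d_j}\,}(r)=\fun f(r)+\ffun g(r),
\]
and an analogous calculation gives $\varfun{fg}(r)=\fun f(r)\cdot\ffun g(r)$, completing the induction.

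I do not expect a real obstacle here; the statement is essentially a bookkeeping exercise verifying that the inductively defined evaluation map $F_k\bix\to\mathbb R$ (restricted to its domain of definition) is a partial ring homomorphism. The only point requiring mild care is the base case with rational functions, where one must separate the algebraic identity in $F(x)$ from the pointwise identity of the associated functions, noting that cancellations can only extend, never shrink, the domain on which $\varfun{f+g}$ and $\varfun{fg}$ are defined.
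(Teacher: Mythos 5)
Your proof is correct and follows essentially the same route as the paper: induction on the height $k$ of the tower, with the inductive step handled by writing $f$ and $g$ in the canonical form \eqref{unqcnrp} over $F_{j-1}\bix$ and applying the inductive hypothesis to the components together with the identity $\bigl(\varfun{\sqrt{d_j}\,}(r)\bigr)^2=\ffun{d_j}(r)$. The paper dispatches the base case $k=0$ as trivial and writes out only the product case of the inductive step, whereas you spell out the rational-function base case and the domain bookkeeping in a bit more detail, but the underlying argument is the same.
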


\begin{proof} For $k=0$, the statement is trivial. The induction step for the product runs as follows. Assume that $f,g\in F_j\bix$. To save space,  let $z=\varfun{\sqrt{d_j}}(r)$. Using Definition~\ref{deffffun},  the induction hypothesis, and $z^2 =\fun{d_j}(r)$, we obtain that 
\allowdisplaybreaks{
\begin{align*}
\varfun{f g}&(r)
=
\bvarfun{(f_1+f_2\sqrt{d_j}\,)  (g_1+g_2\sqrt{d_j}\,)  }(r)
\cr
&= \bvarfun{ f_1g_1 + f_2g_2d_j + (f_1g_2+f_2g_1 ) \sqrt{d_j}\,) } (r)  \cr
&= \varfun{ f_1g_1 + f_2g_2d_j}(r) + \varfun{f_1g_2+f_2g_1}(r)\cdot  z  \cr
&=\fun{f_1}(r)\, \ffun{g_1}(r)+\fun{f_2}(r)\, \ffun{g_2}(r) \, \ffun{d_j}(r)   +\fun{f_1}(r)\, \ffun{g_2}(r)z+ \fun{f_2}(r)\, \ffun{g_1}(r)z\cr
&=(\fun{f_1}(r) +   \fun{f_2}(r)z)  \cdot  (\ffun{g_1}(r) +   \ffun{g_2}(r)z) = \fun f(r)\cdot \ffun g(r)\text.
\end{align*}
}%
The evident treatment for addition is omitted.
\end{proof}

\begin{lemma}\label{infntroots}
For a subfield $F\subseteq \mathbb R$ and 
$f\in F_k\bix $,  assume that   $\fun f$ has infinitely many roots in $\dom(\fun f)$.  Then $f=0$ in $F_k\bix $ and $\fun f\colon \mathbb R\to\set 0$ with $\dom(\fun f)=\mathbb R$. 
\end{lemma}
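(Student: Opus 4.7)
The plan is to prove the lemma by induction on the height $k$ of the tower \eqref{towerxf}.

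For the base case $k=0$, we have $f\in F(x)=F_0\bix$. Write $f=f_1/f_2$ with $f_1,f_2\in F[x]$ relatively prime, so that $\dom(\fun f)$ is $\mathbb R$ minus the finite set of real roots of $f_2$. The zeros of $\fun f$ in $\dom(\fun f)$ are then exactly the real roots of $f_1$ (none of which are roots of $f_2$, by coprimeness). Having infinitely many such roots forces $f_1$, and hence $f$, to be zero in $F_0\bix$.

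For the inductive step, suppose the statement is known in $F_{j-1}\bix$ and let $f\in F_j\bix$. By \eqref{unqcnrp} there exist unique $f_1,f_2\in F_{j-1}\bix$ with $f=f_1+f_2\sqrt{d_j}$. The key idea is to pass to the ``conjugate'' $\bar f:=f_1-f_2\sqrt{d_j}\in F_j\bix$ and the ``norm'' $g:=f\bar f=f_1^2-f_2^2 d_j\in F_{j-1}\bix$. A short inspection of Definition~\ref{deffffun} shows that $\dom(\ffun{\bar f})=\dom(\fun f)$ (both functions use identically the same subexpressions) and that $\dom(\fun f)\subseteq \dom(\ffun g)$, since $\dom(\varfun{\sqrt{d_j}\,})\subseteq \dom(\ffun{d_j})$ and $\ffun g$, computed in $F_{j-1}\bix$, does not require the square-root factor. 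Applying Lemma~\ref{szddLm} inside $F_j\bix$ to the pair $f,\bar f$ (both defined on $\dom(\fun f)$), and observing that $\varfun{f\bar f}$ agrees with $\ffun g$ wherever both are defined, we obtain
\[
\ffun g(r)=\fun f(r)\cdot\ffun{\bar f}(r)\quad\text{for every }r\in\dom(\fun f).
\]
Consequently every zero of $\fun f$ in $\dom(\fun f)$ is also a zero of $\ffun g$ in $\dom(\ffun g)$, so $\ffun g$ has infinitely many roots in its domain. By the inductive hypothesis, $g=0$ in $F_{j-1}\bix$, that is, $f_1^2=f_2^2 d_j$. If $f_2\neq 0$, then $d_j=(f_1/f_2)^2$ would be a square in $F_{j-1}\bix$, contradicting the choice of $d_j$ in \eqref{towerxf}. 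Hence $f_2=0$, whence $f_1^2=0$ in the field $F_{j-1}\bix$ gives $f_1=0$, and uniqueness in \eqref{unqcnrp} yields $f=0$ in $F_j\bix$.

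Finally, once $f=0$ in $F_k\bix$, we may regard $f$ as the zero polynomial of $F[x]\subseteq F_k\bix$ and invoke clause \eqref{deffffuna} of Definition~\ref{deffffun} to conclude that $\fun f\colon\mathbb R\to\set 0$ with $\dom(\fun f)=\mathbb R$. The delicate point throughout is domain bookkeeping: one must verify that each $r\in\dom(\fun f)$ lies in $\dom(\ffun{\bar f})$ and in $\dom(\ffun g)$ so that Lemma~\ref{szddLm} legitimately transfers the infinitely many zeros of $\fun f$ into infinitely many zeros of the function $\ffun g$ associated with an element of the shorter tower.
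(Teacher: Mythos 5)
Your proof is correct and follows essentially the same route as the paper's: induction on $k$, the base case via the finitely-many-roots property of a nonzero numerator, and the inductive step via the conjugate $\bar f$ and the norm $g=f\bar f=f_1^2-f_2^2d_j\in F_{j-1}\bix$, concluding that otherwise $d_j$ would be a square in $F_{j-1}\bix$. The only cosmetic difference is that the paper first reduces to the case $f_2\neq 0$ (since $f_2=0$ hands $f$ directly to the induction hypothesis) and then derives a contradiction, whereas you treat both cases uniformly after obtaining $g=0$; the two are equivalent, and your extra care with the domain inclusions $\dom(\fun f)=\dom(\ffun{\bar f})\subseteq\dom(\ffun g)$ makes explicit a step the paper states without comment.
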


\begin{proof}[Proof of Lemma~\ref{infntroots}] 
We use induction on $k$. 
First, assume that $k=0$. Then $f=f_1/f_2\in F(x)$ where $f_1,f_2\in F[x]$ are relatively prime polynomials and $f_2$ is not the zero polynomial. Necessarily, $f_1$ is the zero polynomial (with no nonzero coefficient), because otherwise $\fun{f_1}$ and $\fun f$ would only have finitely many roots. Therefore, in the field $F(x)$,  $f_1$ and $f=f_1/f_2$ are the zero element, as required.
 
Next, assume that $k>0$ and the lemma holds for $k-1$. Using \eqref{sfExt},  \eqref{unqcnrp}, and the notation given in \eqref{towerxf}, we obtain that  there are unique $f_1, f_2\in F_{k-1}\bix $ such that $f=f_1 +f_2\sqrt{d_k}$. We can assume that $f_2\neq 0$, because otherwise $f\in F_{k-1}\bix $ and the induction hypothesis applies. 
Let $\overline f= f_1 -f_2 \sqrt{d_k }\in F_k\bix $, and define $g :=f \cdot \overline f =  f_1^2-f_2^2 d_k$. 
Since $\dom(\ffun g)\supseteq \dom(\fun f)=\dom(\ffun{\overline f})$, if   $y\in\dom(\fun f)$ is a root of  $\fun f$, then $\ffun g(y)=\fun f(y)\,\ffun{\overline f}(y)= 0\cdot \ffun{\overline f}(y)=0$ shows that $\ffun g(y)=0$. Thus,  $\ffun g$ has infinitely many roots. On the other hand,  $g \in F_{k-1}\bix $, so  the induction hypothesis gives that $g=0$ in   $F_{k-1}\bix $. Therefore, $f_1^2 = f_2^2d_k$ and $d_k$ is the square of $f_1/f_2\in F_{k-1}\bix $, contradicting \eqref{towerxf}.
\end{proof}

\begin{corollary}\label{corTHSM}
Using the notation of \eqref{towerxf},  assume that  $g_1, g_2\in  F_k\bix $ 
 such that  
$\dom(\ffun{g_1})\cap\dom(\ffun{g_2})$ is infinite. Then $g_1=g_2$  if and only if $\ffun{g_1}=\ffun{g_2}$.
\end{corollary}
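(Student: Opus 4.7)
The plan is to handle the two directions of the biconditional separately; the forward direction should follow immediately from the inductive setup of Definition~\ref{deffffun}, whereas the converse is where the preceding Lemma~\ref{infntroots} will do the work, together with Lemma~\ref{szddLm}.

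For $g_1=g_2\Rightarrow \ffun{g_1}=\ffun{g_2}$, I would simply observe that if $g_1$ and $g_2$ are the same element of $F_k\bix$, then by the uniqueness statement \eqref{unqcnrp} applied inductively along the tower \eqref{towerxf} they have identical canonical decompositions at every level, and so clauses \eqref{deffffuna}--\eqref{deffffund} of Definition~\ref{deffffun} assign them the same domain and the same pointwise values. Hence $\ffun{g_1}=\ffun{g_2}$ as functions; the infiniteness hypothesis on the common domain plays no role in this half.

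For the converse, set $h:=g_1-g_2\in F_k\bix$ and aim to apply Lemma~\ref{infntroots} to $h$. First I would check the inclusion
\[
\dom(\ffun{g_1})\cap\dom(\ffun{g_2})\subseteq \dom(\fun h),
\]
which follows by a short induction on $k$ using clause \eqref{deffffund} of Definition~\ref{deffffun}: at the top level $h$ decomposes as $(g_{1,1}-g_{2,1})+(g_{1,2}-g_{2,2})\sqrt{d_k}$ via \eqref{unqcnrp}, and the inclusion for each component reduces to the same statement one level down the tower. Then, writing $g_1=h+g_2$ and invoking Lemma~\ref{szddLm}, I would obtain $\fun h(r)=\ffun{g_1}(r)-\ffun{g_2}(r)$ for every $r\in\dom(\ffun{g_1})\cap\dom(\ffun{g_2})$. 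Since $\ffun{g_1}=\ffun{g_2}$ as functions, $\fun h$ vanishes identically on this intersection, which is infinite by hypothesis. Lemma~\ref{infntroots} then forces $h=0$ in $F_k\bix$, that is, $g_1=g_2$.

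The only step that seems to need real care is the bookkeeping inclusion above: one must be sure that the common domain of the two $\ffun{g_i}$'s really sits inside the domain of $\fun h$ before Lemma~\ref{szddLm} may be used to evaluate $\fun h$ there. Once this is in hand the two preceding lemmas combine to finish the argument, and I do not foresee any genuine obstacle.
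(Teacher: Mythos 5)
Your proposal is correct and uses the same key step as the paper: the paper's entire proof is simply to apply Lemma~\ref{infntroots} to $f := g_1 - g_2$. Your extra care about the domain inclusion $\dom(\ffun{g_1})\cap\dom(\ffun{g_2})\subseteq\dom(\fun h)$ and the appeal to Lemma~\ref{szddLm} only make explicit what the paper leaves implicit, so this is essentially the same argument, just more fully spelled out.
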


\begin{proof} Apply Lemma~\ref{infntroots} for $f:=g_1-g_2\in F_k\bix $.
\end{proof}

\section{Power series with dyadic exponents} \label{powsersect}
In this section, the references aim at well-known facts from calculus and complex analysis; most readers do not need these outer sources. However, some minimum knowledge of calculus is assumed.  Because of \eqref{sqrtnotcontC},  we mainly study real numbers.  A \emph{strict right neighborhood of $0$} is an open interval $(0,\epsilon)$ where $0<\epsilon \in\mathbb R$. The adjective ``strict'' is used to emphasize that a strict right neighborhood of 0 does not contain 0. 
Since $x^{2^{-k}}$ for $k\in\mathbb N$ is not defined if $x<0$, we only consider strict right neighborhoods of 0.    A \emph{dyadic number} is  a rational number of the form $a\cdot 2^t$ where $a,t\in\mathbb Z$. 
Subfields of $\mathbb R$ closed with respect to real  square roots were defined right before \eqref{KhaTfj}. 
The goal of this section is to prove the following theorem.

\begin{theorem}\label{qdrthM}  
Let $F$ be a subfield of $\mathbb R$ such that $F$ is closed with respect to real square roots. Also, let $k\in\mathbb N_0$, and consider a tower \eqref{towerxf} of quadratic  field extensions of length $k$ over $F(x)$. Finally, let $f\in F_k\bix $, and assume that there is a strict right neighborhood $(0,\epsilon)$ of $\,0$ such that  $(0,\epsilon)\subseteq\dom(\fun f)$.  Then there exist an integer $t\in \mathbb Z$ and elements $b_t, b_{t+1}, b_{t+2},\dots$ in the field $F$ such that 
\begin{equation}
\fun f(x) = \sum_{j=t}^\infty b_j\cdot x^{j\cdot 2^{-k}} \label{lsdTbnQwlR}
\end{equation}
holds in some  strict right neighborhood of 0. Furthermore, 
if $f\neq 0$, then $b_t\neq 0$ and $t,b_t, b_{t+1},b_{t+2},\dots$ are uniquely determined.
\end{theorem}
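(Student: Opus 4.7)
The plan is to proceed by induction on the height $k$ of the tower \eqref{towerxf}. For the base case $k=0$, write $f \in F(x)$ in lowest terms as $f_1/f_2$ with $f_1, f_2 \in F[x]$ relatively prime. The hypothesis $(0,\epsilon) \subseteq \dom(\fun f)$ forces $f_2$ to be nonvanishing on $(0,\epsilon)$, and after factoring out the largest common power of $x$ from numerator and denominator one obtains $f = x^t \cdot p/q$ with $p,q \in F[x]$ and $p(0)q(0) \neq 0$. Formal long division of $p$ by $q$ in the formal power series ring $F[[x]]$ yields a Taylor expansion $\sum_{n\geq 0} a_n x^n$ with $a_n \in F$ converging to $\fun{p/q}$ in some right neighborhood of $0$, and multiplying by $x^t$ produces \eqref{lsdTbnQwlR}. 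The coefficients arising from long division are visibly unique, which settles uniqueness in this case.

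For the induction step, assume the result for towers of length $k-1$ and let $f \in F_k\bix$. Write $f = f_1 + f_2\sqrt{d_k}$ in canonical form. By Definition~\ref{deffffun}\eqref{deffffund}, the hypothesis $(0,\epsilon) \subseteq \dom(\fun f)$ forces $(0,\epsilon) \subseteq \dom(\fun{f_1}) \cap \dom(\fun{f_2}) \cap \dom(\varfun{\sqrt{d_k}})$, and in particular $\fun{d_k}(x) \geq 0$ throughout $(0,\epsilon)$. Since $d_k$ is not a square in $F_{k-1}\bix$, it is nonzero; applying the inductive hypothesis to $d_k$ yields $s \in \mathbb{Z}$ and $c_s, c_{s+1}, \ldots \in F$ with $c_s \neq 0$ such that $\fun{d_k}(x) = \sum_{j\geq s} c_j x^{j/2^{k-1}}$ in a strict right neighborhood of $0$. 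Since the leading term $c_s x^{s/2^{k-1}}$ dominates as $x \to 0^+$ and $x^{s/2^{k-1}} > 0$ there, the nonnegativity of $\fun{d_k}$ forces $c_s > 0$; hence $\sqrt{c_s} \in F$ by closure of $F$ under real square roots.

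The crux of the argument is now the expansion of $\sqrt{\fun{d_k}(x)}$. Factor $\fun{d_k}(x) = c_s x^{s/2^{k-1}}(1+h(x))$ where $h(x) = \sum_{j\geq 1}(c_{s+j}/c_s)\, x^{j/2^{k-1}}$ satisfies $h(x) \to 0$ as $x \to 0^+$; then $\varfun{\sqrt{d_k}}(x) = \sqrt{c_s}\, x^{s/2^k}\,\sqrt{1+h(x)}$. Substituting $y = h(x)$ into the binomial series $\sqrt{1+y} = \sum_{n\geq 0}\binomial{1/2}{n} y^n$ (absolutely convergent for $|y|<1$ with rational coefficients) and rearranging the resulting absolutely convergent double sum on a strict right neighborhood of $0$ where $|h(x)|<1$ produces a power series in $x^{1/2^{k-1}}$ with coefficients in $F$. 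Multiplication by $\sqrt{c_s}\, x^{s/2^k}$ then yields $\varfun{\sqrt{d_k}}(x) = \sum_{j\geq s} e_j x^{j/2^k}$ with $e_j \in F$ and $e_s = \sqrt{c_s} \neq 0$. Since the inductive expansions of $\fun{f_1}$ and $\fun{f_2}$ are power series in $x^{1/2^{k-1}} = x^{2/2^k}$, assembling $\fun f = \fun{f_1} + \fun{f_2}\cdot\varfun{\sqrt{d_k}}$ gives a power series in $x^{1/2^k}$ with coefficients in $F$, which is the desired \eqref{lsdTbnQwlR}. Uniqueness, when $f \neq 0$, follows by substituting $y = x^{1/2^k}$ to convert \eqref{lsdTbnQwlR} into a Laurent series in $y$ and invoking the standard uniqueness of Laurent coefficients; alternatively, Corollary~\ref{corTHSM} applied to the difference of any two candidate expansions forces their coefficients to agree.

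I expect the main obstacle to be rigorously justifying the power series substitution producing $\sqrt{1+h(x)}$: one must verify that substituting the series $h(x)$ (with zero constant term at $0^+$) into the binomial series yields, after rearrangement, an absolutely convergent single power series in $x^{1/2^{k-1}}$ whose sum equals $\sqrt{1+h(x)}$ on a common strict right neighborhood of $0$, while simultaneously tracking that each rearranged coefficient remains in $F$. This is standard material on composition of convergent series, but must be restated carefully in the real-variable setting to pin down the radius of convergence and, in view of \eqref{sqrtnotcontC}, to avoid routing the argument through $\mathbb{C}$.
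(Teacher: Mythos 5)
Your proof is essentially correct and takes a genuinely different, slightly cleaner route in the induction step. The paper avoids multiplying two generalized power series by a sign trick: it reduces to $\varfun{f_2\sqrt{d_k}}$, determines the sign of $\fun{f_2}$ on a small right neighborhood (possible because $\fun{f_2}$ has finitely many roots by Lemma~\ref{infntroots}), replaces $\varfun{f_2\sqrt{d_k}}$ by $\pm\varfun{\sqrt{f_2^2 d_k}}$, applies the inductive hypothesis to the single element $f_2^2 d_k\in F_{k-1}\bix$, and then feeds the result into Lemma~\ref{kPlemma}. You instead apply the inductive hypothesis to $d_k$, $f_1$ and $f_2$ separately, derive $c_s>0$ directly from $\ffun{d_k}\ge 0$ on $(0,\epsilon)$ together with $c_s\ne 0$ (a clean observation that parallels the paper's argument that $c_t>0$ for $f_2^2 d_k$), expand $\varfun{\sqrt{d_k}}$ via the binomial series, and then multiply and add generalized power series. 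Your route dispenses with the sign analysis of $\fun{f_2}$ at the cost of needing the (standard but unstated) fact that the Cauchy product of two absolutely convergent series in $x^{1/2^k}$ converges to the product of the sums on a common strict right neighborhood of $0$. The crux in both proofs is identical: the binomial expansion of $\sqrt{1+h(x)}$ with coefficients remaining in $F$; the paper packages exactly this in Lemma~\ref{kPlemma}, proved via holomorphic-function theory applied to the single-valued branch $g(z)=\sum\binomial{1/2}{j}z^j$ on $|z|<1$ (so the paper does route through $\mathbb{C}$, but safely — \eqref{sqrtnotcontC} is a caution about $\sqrt{\phantom o}$ on $\mathbb{C}$, not about compositions of explicit holomorphic functions). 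Your acknowledged obstacle is therefore precisely what Lemma~\ref{kPlemma} discharges.

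Two small corrections. First, your ``alternative'' uniqueness argument via Corollary~\ref{corTHSM} does not apply: that corollary compares \emph{elements of} $F_k\bix$, whereas the two candidate expansions $\sum b_j x^{j\cdot 2^{-k}}$ are infinite generalized power series, not elements of any $F_k\bix$. Your primary argument — substituting $y=x^{1/2^k}$ and invoking uniqueness of Laurent coefficients — is exactly the paper's and is correct, so uniqueness stands. Second, for completeness one should dispose of the degenerate case $f_2=0$ in the canonical form $f=f_1+f_2\sqrt{d_k}$: then $f=f_1\in F_{k-1}\bix$ and the inductive hypothesis applies directly, since $\dom(\fun f)\subseteq\dom(\fun{f_1})$.
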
 

Let us emphasize that $t$ in \eqref{lsdTbnQwlR} can be negative. Before proving this theorem, we need  a  lemma;  $f$ below  has nothing to do with $F_k\bix $.

\begin{lemma}\label{kPlemma}
Assume that $0<\epsilon\in\mathbb R$,    $f\colon (0,\epsilon)\to \mathbb R$ is a  non-negative function, and  
\begin{equation}
f(x)=1+\sum_{j=1}^\infty a_j x^{j}\qquad(\text{with real coefficients } a_j)
\label{aPwrser}
\end{equation}
for all $x\in(0,\epsilon)$. Let $b_j$ denote the  the real numbers defined recursively by 
\begin{equation}
b_j=\begin{cases}
{\displaystyle{a_j/2 - \sum_{t=1}^{\lfloor\kern 0.5 pt j/2 \rfloor}  b_t b_{j-t}  }}, &\text{ if } j\text{ is odd,}\cr
{\displaystyle{a_j/2 - b_{j/2}^2   - \sum_{t=1}^{j/2 -1}  b_t b_{j-t}  }}, &\text{ if } j\text{ is even,}
\end{cases}
\label{recursFm}
\end{equation}
for $j\in \mathbb N$. Then, in an appropriate  strict right neighborhood of $0$, 
\begin{equation}
\sqrt{f(x)}=1+\sum_{j=1}^\infty b_j x^{j}\text{.}
\label{bSqnCe}
\end{equation}
\end{lemma}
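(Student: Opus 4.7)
The plan is to reduce the claim to the classical binomial series $\sqrt{1+u}=\sum_{k\geq 0}\binom{1/2}{k}u^k$, valid for $|u|<1$, then compose it with $h(x):=f(x)-1=\sum_{j\geq 1}a_jx^j$, and finally identify the coefficients of the resulting power series with the $b_j$'s by uniqueness of Taylor coefficients at $0$.

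First I would observe that the series $\sum_{j\geq 0}a_jx^j$ actually converges \emph{absolutely} on the symmetric interval $(-\epsilon,\epsilon)$, not merely on $(0,\epsilon)$: given $x_1\in(0,\epsilon)$, pick $x_0\in(x_1,\epsilon)$; convergence of $\sum a_jx_0^j$ bounds the terms $a_jx_0^j$, whence $\sum a_jx^j$ converges absolutely on $|x|<x_0$. Thus the series defines a real-analytic extension of $f$ to $(-\epsilon,\epsilon)$ with $f(0)=1$, and by continuity there is some $\delta\in(0,\epsilon)$ with $f(x)>0$ and $|h(x)|\leq \sum_{j\geq 1}|a_j|\,|x|^j\leq 1/2$ for all $x\in(-\delta,\delta)$.

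Next I would substitute $u=h(x)$ into the binomial series to get
\[
\sqrt{f(x)}=\sum_{k=0}^{\infty}\binom{1/2}{k}h(x)^k\quad\text{for }x\in(-\delta,\delta).
\]
The estimate $|h(x)|\leq 1/2$ provides the majorant $\sum_k\bigl|\binom{1/2}{k}\bigr|\cdot(1/2)^k<\infty$, which bounds the iterated absolute series in $k$ and in the monomials of $x$. Hence the double sum converges absolutely and may be rearranged into a genuine power series
\[
\sqrt{f(x)}=1+\sum_{j=1}^{\infty}c_jx^j,
\]
with real coefficients $c_j$, valid on $(-\delta,\delta)$ (and in particular on the strict right neighborhood $(0,\delta)$ of $0$, where the left side equals the positive square root of $f(x)$).

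Finally I would verify $c_j=b_j$ by induction on $j$. Squaring both sides gives $\bigl(1+\sum_{j\geq 1}c_jx^j\bigr)^{2}=f(x)=1+\sum_{j\geq 1}a_jx^j$ on $(-\delta,\delta)$, so uniqueness of Taylor coefficients at $0$ (via iterated differentiation) yields $2c_j+\sum_{t=1}^{j-1}c_tc_{j-t}=a_j$ for every $j\geq 1$. Solving for $c_j$ and, in the even case, separating the diagonal term $t=j/2$ from the symmetric pairs reproduces precisely the recursion \eqref{recursFm} that defines $b_j$; by induction $c_j=b_j$ for all $j$, which establishes \eqref{bSqnCe}. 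The main technical obstacle is the rearrangement-of-double-series step: in a complex-analytic framework it would be a one-line invocation of holomorphy of $\sqrt{\,\cdot\,}$ near $1$, but since the paper stays in the real domain the cleanest route is to exhibit the explicit majorant dominated by the binomial series at $u=1/2$, as above.
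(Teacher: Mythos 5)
Your proof is correct and follows essentially the same route as the paper: expand $\sqrt{1+u}$ as the binomial series, compose with $h(x)=f(x)-1$, and identify the resulting coefficients by squaring and comparing. The one genuine difference lies in how the existence of a power series for the composition is justified. The paper passes to $\mathbb{C}$ and invokes holomorphy (Rudin) to obtain the Taylor expansion of $g\bigl(\sum a_jx^j\bigr)$ on a small disc, whereas you argue purely over $\mathbb{R}$ with an explicit majorant: the iterated absolute series is dominated by $\sum_k\bigl|\binom{1/2}{k}\bigr|(1/2)^k<\infty$, so the double sum rearranges into a power series. Your closing remark that ``the paper stays in the real domain'' is therefore backwards — the paper's argument here is the complex-analytic one, and your majorant argument is the more elementary route, not a reconstruction of what the paper does. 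One correction to flag: the Cauchy-product relation $2b_j+\sum_{t=1}^{j-1}b_tb_{j-t}=a_j$ gives, for $j$ even, $b_j=a_j/2-\tfrac12 b_{j/2}^2-\sum_{t=1}^{j/2-1}b_tb_{j-t}$; the factor $\tfrac12$ on the diagonal term is missing from \eqref{recursFm} as printed (test with $f(x)=1+x$: the printed recursion gives $b_2=-\tfrac14$, while the correct coefficient of $x^2$ in $\sqrt{1+x}$ is $-\tfrac18$, consistent with the paper's own expansion \eqref{dkjGtWrB}). So your derivation is sound, but it does not ``reproduce precisely'' \eqref{recursFm} as written — it reveals a typo in it.
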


By basic properties of the radius of convergence, see, e.g.,  Rudin~\cite[Subsection 10.5 in page 198]{rudin},  the series in \eqref{aPwrser} is also convergent  in   $(-\epsilon,\epsilon)$; however, the function $f(x)$ need not be defined for $x\in(-\epsilon,0]$.

\begin{proof}[Proof of Lemma~\ref{kPlemma}]
First, we show the existence of a power series that represents $\sqrt f$ in the sense of \eqref{bSqnCe}, but we do not require the validity of \eqref{recursFm} at this stage. We know that, for $x\in (-1,1)$, the \emph{binomial series} 
\begin{equation}
\sum_{j=0}^\infty {1/2 \choose j} x^j = 1 +  \frac{\frac 12}{1!}  x + \frac{\frac 12(\frac{1}{2}-1) }{2!} x^2
+ \frac{\frac 12 (\frac{1}{2}-1 ) (\frac{1}{2}-2)   }{3!} x^3 + \cdots
\label{binomseR}
\end{equation}
is absolutely convergent and it  converges to $\sqrt{1+x}$ on $(-1,1)$;  see, for example, Wrede and Spiegel~\cite[page 275]{schaumreal}.  Therefore, the same series defines a holomorphic function $g(z)$ on the open disk $D_1=\set{z\in \mathbb C: |z|<1}$ of complex numbers.  
Note that, for $z\in D_1$,   $g(z)$ is one of the complex values of $\sqrt {1+z}$. 
Since both the series \eqref{binomseR} and the function $\sqrt{1+x}$ are continuous on $(-1,1)$ and they take the same \emph{positive} value at $x=0$, we obtain that $g(x)=\sqrt{1+x}$ for any real $x\in(-1,1)$.
Observe that  $\sum_{j=1}^\infty a_jx^j$ in \eqref{aPwrser} is convergent and differentiable on the open disk $D_\epsilon=\set{z\in \mathbb C: |z|<\epsilon}$. By its continuity,   $|\sum_{j=1}^\infty a_jx^j|<1$ on an appropriate small open disc $D_\delta$, where $\delta<\epsilon$. Furthermore, as any power series within its radius of convergence, $\sum_{j=1}^\infty a_jx^j$ is differentiable on $D_\delta$.
Therefore, the composite function $g(\sum_{j=1}^\infty a_jx^j)$  is also differentiable on $D_\delta$.  Hence, by a basic property of holomorphic functions, 
there exists a power series $1+\sum_{j=1}^\infty b_jx^j$,  without stipulating \eqref{recursFm}, such that $g(\sum_{j=1}^\infty a_jx^j) =  \sum_{j=0}^\infty b_jx^j =  1+\sum_{j=1}^\infty b_jx^j$  for all complex $x\in D_\delta$; see, for example, Rudin~\cite[Theorem 10.16 in page 207]{rudin}. 
Here,  $b_0=1$ follows from $g(0)=1$.  
 In particular, for all real $x$ in a small strict right neighborhood of $0$, 
\[
\sqrt{f(x)}
= \textstyle{ \sqrt{ 1 +  \sum_{j=1}^\infty a_jx^j} } = g(\sum_{j=1}^\infty a_jx^j)= 1+\sum_{j=1}^\infty b_jx^j \text.
\]
This proves the existence of an appropriate power series such that  \eqref{bSqnCe} holds in a strict right neighborhood of $0$, but we still have to show the validity of \eqref{recursFm}.

Since the power series in \eqref{bSqnCe} is absolute convergent in a small strict right neighborhood of $0$, its product with itself  converges to $\bigl(\sqrt {f(x)}\,\bigr){}^2=f(x)$; see, for example,  Wrede and Spiegel~\cite[Theorem 5 in Chapter 11, page 269]{schaumreal}. Therefore,
\begin{equation}
\begin{aligned}
f(x)& =1 + (b_1+b_1)x + (b_2+b_1b_1+b_2)x^2 \cr
&+(b_3+b_1b_2+b_2b_1+b_3)x^3\cr
&+(b_4+b_1b_3+b_2b_2+b_3b_1+b_4)x^4
\cr
&+(b_5+b_1b_4+b_2b_3+b_3b_2+b_4b_1+b_5)x^5\cr
&+(b_6+b_1b_5+b_2b_4+b_3b_3+b_4b_2+b_5b_1+b_6)x^6+\cdots\,\,.
\end{aligned}
\label{dkjGtWrB}
\end{equation}
The power series converging to a function is unique; see, for example, Rudin~\cite[Corollary to Theorem 10.6 in page 199]{rudin}. Consequently, comparing the coefficients in \eqref{dkjGtWrB} with those in \eqref{aPwrser}, we obtain that \eqref{recursFm} holds.
\end{proof}

\begin{proof}[Proof of Theorem~\ref{qdrthM}] 
First, to prove the uniqueness part, assume that \eqref{lsdTbnQwlR} holds for all $x\in (0,\epsilon)$. Substituting $\xi ^{ 2^k}$ for $x$, we obtain that,    for all $\xi\in (0, \epsilon^{2^{-k}})$,  $\fun f(\xi^{2^k}) =  \sum_{j=t}^\infty b_j \xi^j  =  \sum_{s=0}^\infty b_{s+t} \xi^{s+t} $. There are two ways to continue. First, we can consider the function $g(z):=\fun f(z^{2^k})$ of a complex variable $z$ and then we can refer to the uniqueness of its Laurent series expansion; e.g., see Gamelin~\cite[page 168]{gamelin}. Second, and more elementarily, we can observe that 
 $\xi^{-t}\cdot \fun f(\xi^{2^k}) = \sum_{j=0}^\infty  b_{j+t}\cdot \xi^j $  for all $\xi\in (0, \epsilon^{2^{-k}})$. However, the power series of a function is unique; see, e.g., Rudin~\cite[Corollary to Theorem 10.6 in page 199]{rudin}. The uniqueness of the coefficients in $\sum_{j=0}^\infty  b_{j+t}\cdot \xi^j $ implies 
the uniqueness part of the theorem.

We prove the existence part of the theorem by induction on $k$. If $k=0$, then $f=f_1/f_2$ for some relatively prime polynomials $f_1,f_2\in F[x]$. If $\fun {f_2}(0)\neq 0$, then there exists an open circular disc $D$ of positive radius with center 0 such that $\fun{f_2}$ has no zeros in $D\subset \mathbb C$.
Since $\fun f$ in $\dom(\fun f)$ equals $\fun{f_1} /\, \fun{f_2}$ and both the numerator and the denominators are polynomial functions,  $\fun f$  is holomorphic in $D$ and its Taylor series, which is of form \eqref{lsdTbnQwlR} with $t=k=0$, converges to $\fun f(x)$ by a well-known property of holomorphic functions; see, e.g.,  Rudin~\cite[Theorem 10.16 in page 207]{rudin}.  Furthermore, since the coefficients of the Taylor series are obtained by repeated derivations, they belong to $F$. The other case, where $\fun{f_2}(0)=0$, reduces the above case easily as follows. Assume that   $\fun{f_2}(0)=0$. Then there are a unique $n\in \mathbb N$ and a unique  $g\in F[x]$ such that $f_2(x)=x^n \cdot g(x)$ and $\ffun g(0)\neq 0$. By the previous case, $\varfun{ f_1/g}$ is represented by its Taylor series with coefficients in $F$. We can multiply this series by $x^{-n}$ componentwise, see for example, Gamelin~\cite[page 173]{gamelin}. 
In other words, we multiply the series with  $\varfun{x^{-n}}$. 
In this way, we obtain the Laurent series of
 $ \varfun{f_1/g} \cdot x^{-n} =   \varfun{(f_1/g) \cdot x^{-n}}   =       \fun f$ 
with the same coefficients but ``shifted'' to the left by $n$. Thus,  we obtain a series \eqref{lsdTbnQwlR} with $t=-n$ and $k=0$, which converges to $\fun f$ in a strict right neighborhood of $0$, as required.

Next, we assume that $k>0$ and that the theorem holds for $k-1$. Combining \eqref{sfExt} and \eqref{towerxf}, we obtain that $f$ is of the form $f=f_1+f_2\sqrt{d_k}$ where $f_1, f_2, d_k\in F_{k-1}\bix $. Since the validity of the existence part of the theorem is obviously inherited from the summands to their sum when we add   two functions, it suffices to deal with $f_2\sqrt{d_k}$ in case  $f_2$ in  $F_{k-1}\bix $ is distinct from $0$. Note at this point that  the validity of the existence part of the theorem for $f_2$  is also inherited if we change $f_2$ to $-f_2$. 
We know from Lemma~\ref{infntroots} that $\fun{f_2}$ and $\ffun{d_k}$ only have finitely many roots in their domains. Therefore, decreasing the strict right neighborhood of $0$  if necessary, we can assume that 
$\varfun{f_2{}^2\cdot d_k}$ has no root in $(0,\epsilon)$ at all. 
Since this function is composed from the four arithmetic operations and square roots, it is continuous on its domain. Hence, either $\fun{f_2}$ is positive on $(0,\epsilon)$, or it is negative on  $(0,\epsilon)$. 
We can assume that $\fun{f_2}$ is positive on $(0,\epsilon)$, because in the other case we could work with $\varfun{-f_2} = -\fun{f_2}$ similarly. 
Also, $\ffun{d_k}$ is nonnegative on $(0,\epsilon)$, because  $(0,\epsilon)\subseteq \dom(\fun f)$. 
Hence $\varfun{f_2\sqrt{d_k}\,}$ and $\varfun{\sqrt{f_2{}^2\cdot d_k}}$ agree on the interval $(0,\epsilon)$.   Since we are only interested in our function on $(0,\epsilon)$, it suffices to deal with $\varfun{\sqrt{f_2{}^2\cdot d_k}}$ rather than $\varfun{f_2\sqrt{d_k}\,}$.   By the induction hypothesis, $\varfun{f_2{}^2d_k}$ can be given in  form \eqref{lsdTbnQwlR}, with $k-1$ in place of $k$. To simplify the notation, we will write $f$ rather than $f_2{}^2d_k$. So, $f\in F_{k-1}\bix$ and $\ffun f$ is positive on a strict right neighborhood of 0.  By the induction hypothesis, we have a unique series below with all the $c_j$ in $F$ and $c_t\neq 0$ such that 
\begin{equation}
\fun f(x) = \sum_{j=t}^\infty c_j\cdot x^{j\cdot 2^{1-k}}
\label{lsdpTindhQR}
\end{equation}
in a strict right neighborhood of $0$. Note that 
\eqref{lsdpTindhQR} may fail for $x=0$. 
Taking out the first summand from  \eqref{lsdpTindhQR}  and writing $s$ for $j-t$, we obtain that 
\begin{equation*}
\fun f(x) =  c_t\cdot x^{t\cdot{2^{1-k}}} \cdot   \sum_{s=0}^\infty \frac{c_{s+t}}{c_t}\cdot x^{s\cdot 2^{1-k}} = c_t\cdot x^{t\cdot{2^{1-k}}} \cdot  \Bigl(1+ \sum_{s=1}^\infty a_s y^s \Bigr),
\label{dkuTrG}
\end{equation*}
where $a_s:=c_{s+t}/c_t\in F$ and $y:= x^{2^{1-k}} =  x^{2\cdot 2^{-k}} $. 
The rightmost infinite sum above is convergent in a strict right neighborhood of 0, because so is \eqref{lsdpTindhQR}.  Applying Lemma~\ref{kPlemma}  with  $b_0:=1$ and $b_j$ defined by  \eqref{recursFm} and replacing $t+2j$ by $s$ in the next step, we obtain
\begin{equation}
\begin{aligned}
\varfun {\sqrt f\,}(x) = 
\sqrt{\fun f(x)} &=  \sqrt{c_t}\cdot x^{t\cdot{2^{-k}}} \cdot \sum_{j=0}^\infty b_j y^j = 
 \sum_{j=0}^\infty b_j \sqrt{c_t}  \cdot   x^{(t+2j)\cdot{2^{-k}}} \cr
 & =
 \sum_{s=t}^{\infty,\bullet}  b_{(s-t)/2}\cdot \sqrt{c_t}  \cdot   x^{s\cdot{2^{-k}}}   =
\sum_{s=t}^\infty d_s \cdot   x^{s\cdot{2^{-k}}} 
\end{aligned}
\end{equation}
in some right neighborhood of 0, 
where $\bullet$ means that only those subscripts $s$ occur for which  $s\equiv t$ (mod 2), and we have that 
$d_s:=b_{(s-t)/2}\cdot \sqrt{c_t} $ for $s\equiv t$ (mod 2), and $d_s:=0$ otherwise.
Since the $a_s$ are all in $F$, the $b_j$ given by \eqref{recursFm}  are also in $F$. Since $\ffun f$ is positive in a (small) strict right neighborhood of 0 and, as $x$ tends to 0, the series in \eqref{lsdpTindhQR} is dominated by $c_t\cdot x^{t\cdot 2^{1-k}}$, we obtain in a straightforward way that $c_t>0$. Hence, $\sqrt{c_t}\in F$, because $F$ is closed with respect to real square roots, and we conclude that $d_j\in F$ for $j\in\set{t,t+1,t+2\dots}$.  
This completes the inductive step and the proof of Theorem~\ref{qdrthM}.
\end{proof}

\section{Puiseux series and historical comments}\label{sectpuiseuxcomments}
Now, we are going to compare Theorem~\ref{qdrthM} to known results; the rest of the paper does not rely on this section. 
Let  $F$ be a subfield of the field $\mathbb C$ of complex numbers.
 A \emph{Puiseux series} over $F$ is a generalized power series of the form 
\begin{equation}
\sum_{j=t}^\infty b_j\cdot x^{j/m},
\label{sldTnGR}
\end{equation} 
where   
$m\in \mathbb N$, $t\in \mathbb Z$, and  $b_j\in F$ for $j\in\set{t, t+1,t+2, \dots}$.   In other words, a Puiseux series is obtained from a Laurent series with finitely many powers of negative exponent by substituting ${\root m\of x}$ for its variable.  Note that, for $w\in\mathbb C\setminus\set 0$, the substitution $\sum_{j=t}^\infty b_j\cdot w^{j/m}$ of $w$ for $x$ in \eqref{sldTnGR} is understood such that 
first we fix one of the $m$ values of $\root m \of w$, and then we use \emph{this} value of $\root m \of w$ to define $w^{j/m}$ as $\bigl(\kern-2pt \root m \of w\,\bigr)^j$, for all $t\leq j\in\mathbb Z$. This convention allows us to say that certain Puiseux series are convergent in a punctured disk $D_\epsilon^+=\set{z\in \mathbb C: 0<|z|<\epsilon}$.
This concept and  the following theorem go back to  
Isaac Newton~\cite{newton,wiki-puiseux}.

\begin{theorem}[{Puiseux's Theorem, \cite{puiseux1, puiseux2}; see also \cite{nowak, ruiz, wiki-puiseux}}]\label{puiseux}      
Let $F\subseteq \mathbb C$ be a field, and let 
\begin{equation} 
P(x,y)=A_0(x)+A_1(x)y+\cdots+A_n(x) y^n
\label{pZRSrS}
\end{equation}
be an irreducible polynomial in $F[x,y]=F[x][y]$ such that the $A_j[x]$ belong to $F[x]$ and $A_n(x)\neq 0$. 
 If $F$ is  algebraically closed, then  there exist a  \emph(\kern-1pt small\emph) positive $\epsilon\in \mathbb R$  and a Puiseux series \eqref{sldTnGR} such that this series converges to a function $Y(x)$ in the punctured disk $D_\epsilon^+$ and, for all $x\in D_\epsilon^+$, we have that  $P(x,Y(x))=0$.
\end{theorem}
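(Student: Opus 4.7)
I would prove this by the classical \emph{Newton polygon} method. After dividing out any overall power of $x$, form the Newton polygon $N(P)$, namely the lower-left convex hull of the set $\set{(i,j)\in\mathbb N^2 : x^iy^j \text{ occurs in } P \text{ with nonzero coefficient}}$. Because $A_n(x)\ne 0$ and $P$ is not a pure power of $x$, $N(P)$ has at least one edge of finite negative slope; pick one, say of slope $-q/p$ with $\gcd(p,q)=1$ and $p\ge 1$. Substituting $y = x^{q/p}\eta$ into $P$ and dividing by the smallest appearing power of $x$, one obtains (at $x=0$) the \emph{characteristic polynomial} $\Phi(\eta)\in F[\eta]$ whose degree equals the length of the chosen edge. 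Since $F$ is algebraically closed, $\Phi$ has a nonzero root $c\in F$; the first term of the Puiseux series is then $c\cdot x^{q/p}$.

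To produce subsequent terms, substitute $y = x^{q/p}(c+y_1)$ and iterate the Newton polygon process on the resulting equation for $y_1$. There are two cases. If $\Phi'(c)\ne 0$ (simple root), the change of variables $x=t^p$ turns the equation into a polynomial relation $\tilde P(t,y_1)=0$ with $\tilde P(0,0)=0$ and $(\partial\tilde P/\partial y_1)(0,0)\ne 0$; the holomorphic implicit function theorem then furnishes a convergent power series $y_1=y_1(t)$ solving it, which after back-substitution gives a Puiseux series for $y$ converging in some $D_\epsilon^+$. Otherwise $c$ is a multiple root of $\Phi$ and one reapplies Newton's process to the new equation; this cannot continue forever because at each step a positive-integer invariant (for example, the multiplicity of $c$ in $\Phi$, or the degree along the next chosen edge) strictly decreases once we commit to a branch, so eventually we land in the simple-root case.

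The main obstacle is \emph{convergence}: constructing the formal Puiseux expansion is a bookkeeping exercise in Newton polygons, but showing that it sums to an actual holomorphic branch on a punctured disk is the real content. The cleanest route is to push the Newton iteration until a simple root of some characteristic polynomial is reached, where the holomorphic implicit function theorem immediately delivers an analytic solution; alternatively, one may estimate the coefficients by a Cauchy majorant argument. Irreducibility of $P$ is what ensures the iteration terminates at a simple root: otherwise, two distinct choices of $c$ at some stage would yield two Puiseux solutions producing a nontrivial factorization of $P$ over the Puiseux series field (via Gauss's lemma applied over the integral closure of $F[[x]]$), contradicting irreducibility.
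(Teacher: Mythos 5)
The paper does not prove Theorem~\ref{puiseux}; it is quoted from the literature (\cite{puiseux1,puiseux2,nowak,ruiz,wiki-puiseux}), and Section~\ref{sectpuiseuxcomments} is explicitly expository (``the rest of the paper does not rely on this section''). So there is no paper proof to compare against, and your sketch has to be judged on its own.

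Your outline is the classical Newton--Puiseux algorithm, and its overall shape is right: Newton polygon, leading fractional exponent from an edge, characteristic polynomial, root in the algebraically closed $F$, iterate, and close with the holomorphic implicit function theorem on a simple root. Three points need repair, though. First, restricting to edges of ``finite negative slope'' is too narrow: the conclusion allows Puiseux series with $t\le 0$ in \eqref{sldTnGR}, so the leading exponent can be zero or negative, which corresponds to a slope-$0$ or positive-slope edge (or, equivalently, one must pre-compose with $y\mapsto 1/y$); as written you cannot reach those branches. Second, the termination step is the real crux, and ``the multiplicity of $c$ in $\Phi$ strictly decreases once we commit to a branch'' is not literally true; the multiplicity is only non-increasing and can stall. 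The standard argument is combinatorial: the product of the denominators $p_i$ appearing in the successive fractional exponents divides the $y$-degree $n$, so after finitely many steps the exponents become integral after a single substitution $x=t^p$, and from there Hensel's lemma or the implicit function theorem finishes. Third, the role you assign to irreducibility is off. Termination of the algorithm is independent of irreducibility (it holds for any $P$ with $A_n\ne 0$ depending on $y$), and the argument that ``two Puiseux solutions would give a nontrivial factorization of $P$ over the Puiseux series field, contradicting irreducibility'' is circular and in fact false as a contradiction: $P$ irreducible over $F[x,y]$ does factor, indeed splits into linear factors, over the field of convergent Puiseux series --- that field being algebraically closed is precisely the strong form of Puiseux's theorem. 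Irreducibility is used in the classical statement to organize the $n$ Puiseux roots into a single conjugacy class, not to make the Newton iteration stop.
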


Note that $Y(x)$ above is a multiple-valued function in general and it is rarely continuous in $D_\epsilon^+$. This is 
exemplified by $P(x,y)=y^2-x\in\mathbb C[x][y]$ together with  \eqref{sqrtnotcontC}, where  the Puiseux series is the one-element sum~$ x^{1/2}$. 
Note also that Theorem~\ref{puiseux}   seems not to imply Theorem~\ref{qdrthM} in a straightforward way, because $F_k\bix $ is not an algebraically closed field for a finitely generated field $F$. Actually, Theorem~\ref{puiseux} only implies a weaker form of Theorem~\ref{qdrthM}. This weaker form asserts that  the $b_j$ belong to the algebraic closure of $F(x)$;  this statement  is useless at geometric constructibility problems. Fortunately, as our proof witnesses, it was possible to tailor Puiseux's proof to the peculiarities of Theorem~\ref{qdrthM}.

\section{A limit theorem for geometric constructibility}\label{limthemsect} 
The aim of this section is to prove the following statement. For the constructibility of numbers, see \eqref{bdWThrT}.

\begin{theorem}[Limit Theorem for Geometric Constructibility]\label{limthm}
Let  $a_1$, \dots, $a_m$, and $d$ be real numbers such that  $d$ is geometrically constructible from   $a_1,\dots, a_m$. Let $u(x)$ be a real-valued function. 
If there exist a  positive $\epsilon\in\mathbb R$ and a nonzero  polynomial $W$ of two indeterminates over $\mathbb R$ such that 
\begin{enumeratei}
\item\label{limthma} $u(x)$ is defined on the interval $(d,d+\epsilon)$, 
\item\label{limthmb} every coefficient in $W$ is geometrically constructible from   $a_1,\dots, a_m$, and $W(c,u(c))=0$ holds for all $c\in (d,d+\epsilon)$ such that $c$ is transcendental over $\mathbb Q(a_1,\dots, a_m)$, 
\item\label{limthmc} for all $c\in (d,d+\epsilon)$ such that $c$ is transcendental over $\mathbb Q(a_1,\dots, a_m)$, $u(c)$ is geometrically   constructible from $c, a_1,\dots, a_m$, and
\item\label{limthmd} $\lim_{x\to d+0} u(x)\in \mathbb R$ exists,
\end{enumeratei}
then $\lim_{x\to d+0} u(x)$ is geometrically  constructible from $a_1,\dots, a_m$.  
\end{theorem}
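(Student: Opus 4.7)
The plan is to realize $u$, on a right neighborhood of $d$, as the evaluation $\fun f$ of some element $f$ in a quadratic tower over $F(x)$, and then to read off the limit from the Puiseux-type expansion of Theorem~\ref{qdrthM}. Translating by $x \mapsto x+d$ (legitimate since $d$ is constructible and hence lies in the base field) I may assume $d = 0$. Let $F := \qclo{\mathbb Q(a_1,\dots,a_m)}$; by \eqref{KhaTfj} this field is closed under real square roots, by \eqref{bdWThrT} the coefficients of $W$ lie in $F$, and by the same equivalence it suffices to prove $\lim_{x\to 0^+} u(x) \in F$. Factoring $W$ in $F(x)[y]$ into irreducibles and applying the pigeonhole principle to the uncountably many transcendentals in $(0,\epsilon)$, I may replace $W$ by an irreducible factor $W^* \in F[x,y]$ for which $W^*(c, u(c)) = 0$ on an uncountable transcendental set $T \subseteq (0,\epsilon)$ accumulating at $0$.

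Next I would construct a single tower containing every root of $W^*$. For $c \in T$, hypothesis \eqref{limthmc} combined with \eqref{bdWThrT} yields $u(c) \in \qclo{\mathbb Q(c, a_1, \dots, a_m)}$, so $u(c)$ is constructible over the finitely generated subfield $K_1 := \mathbb Q(c, a_1,\dots,a_m,\text{coefficients of }W^*) \subseteq F(c)$. Because $c$ is transcendental over $F$, \eqref{uniquesmpltrans} makes $W^*(c,y)$ irreducible in $F(c)[y]$ and a fortiori in $K_1[y]$, so Proposition~\ref{szrk}\eqref{szrkc} forces the splitting field of $W^*(c,y)$ over $K_1$ to have $2$-power degree. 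Base-changing to $F(c)$ preserves this, and transferring back through \eqref{uniquesmpltrans} one more time shows that the splitting field of $W^*(x,y)$ over $F(x)$ has $2$-power degree. Standard Galois theory then supplies a quadratic tower
\[
F(x) = F_0\bix \subset F_1\bix \subset \cdots \subset F_k\bix
\]
as in \eqref{towerxf} that contains all $n := \degx y{W^*}$ roots $f_1,\dots,f_n \in F_k\bix$ of $W^*$.

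Now I would match $u$ with a particular $\fun{f_i}$ near $0$. Outside a finite exceptional set (controlled by Lemma~\ref{infntroots} applied to the pairwise differences $f_i - f_j$ and to the leading $x$-coefficient of $W^*$), the factorization $W^* = a(x) \prod_i (y - f_i)$ in $F_k\bix[y]$ evaluates at $c$ to $W^*(c, y) = \fun a(c) \prod_i (y - \fun{f_i}(c))$, so for any such $c \in T$ at which every $\fun{f_i}$ is defined, $u(c) \in \{\fun{f_1}(c),\dots,\fun{f_n}(c)\}$. A second pigeonhole picks an index $i$ with $u(c) = \fun{f_i}(c)$ on an uncountable $S \subseteq T$ accumulating at $0$. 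Applying Theorem~\ref{qdrthM} inductively to the radicands $d_j$ pins down the sign of each $\ffun{d_j}$ on some $(0,\delta)$, and a short induction on tower depth then shows that $\dom(\fun{f_i}) \cap (0,\delta)$ is either all of $(0,\delta)$ or finite; the uncountability of $S$ selects the first alternative. Theorem~\ref{qdrthM} applied to $f_i$ gives
\[
\fun{f_i}(x) = \sum_{j=t}^\infty b_j \cdot x^{j \cdot 2^{-k}}, \qquad b_j \in F,
\]
on a right neighborhood of $0$. Continuity of $\fun{f_i}$ on $(0,\delta)$ makes its limit along $S$ equal to $\lim_{x\to 0^+}\fun{f_i}(x)$, which must then coincide with the finite $\lim_{x\to 0^+}u(x)$; finiteness forces $t \geq 0$, so the limit is $b_0$ (if $t=0$) or $0$ (if $t>0$), both in $F$ and hence constructible from $a_1,\dots,a_m$ by \eqref{bdWThrT}.

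The main obstacle I anticipate lies in the third step: the abstract Galois-theoretic tower produced in step two carries no canonical real realization, whereas Definition~\ref{deffffun} rigidly uses positive real square roots, so I must verify that the specific evaluation $\fun{f_i}$ really hits $u(c)$ on a set rich enough to pin down the limit, rather than missing it because the matching Galois conjugate would demand a different sign convention. The evaluated factorization $W^*(c,y) = \fun a(c)\prod_i(y - \fun{f_i}(c))$ together with the sign-near-zero analysis derived from Theorem~\ref{qdrthM} applied internally to each $d_j$ is what ultimately makes the argument fit.
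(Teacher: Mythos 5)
Your overall strategy (reduce to $d=0$, pass to $F=\qclo{\mathbb Q(a_1,\dots,a_m)}$, realize $u$ as $\fun f$ for some $f$ in a quadratic tower over $F(x)$, and read off the limit from the Puiseux-type expansion of Theorem~\ref{qdrthM}) matches the paper's plan, and your reduction steps and final limit computation are sound. However, the route you take to produce the tower has a genuine gap, and it is exactly the one you yourself flag at the end.

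You build the tower abstractly: from the constructibility hypothesis you infer that the splitting field of $W^*(c,y)$ over $F(c)$ has $2$-power degree, transfer this back to $F(x)$ via \eqref{uniquesmpltrans}, and then invoke Galois theory to get a chain $F(x)=F_0\bix\subset\cdots\subset F_k\bix$ of quadratic extensions containing every root of $W^*$. The problem is that Definition~\ref{deffffun} assigns a real-valued function $\fun f$ to $f$ only relative to a \emph{specific} presentation $F_j\bix=F_{j-1}\bix(\sqrt{d_j})$, and the evaluation at a point $c$ requires $\ffun{d_j}(c)\ge 0$ at every level. An abstract Galois-theoretic tower gives you each $d_j$ only up to multiplication by a square in $F_{j-1}\bix$, and nothing forces $\ffun{d_j}$ to be nonnegative on a right neighbourhood of $0$. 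Worse, if the splitting field of $W^*(x,y)$ over $F(x)$ is not contained in the real Puiseux field (e.g. $W^*(x,y)=y^4-x$, whose splitting field needs $\sqrt{-1}$), then no choice of $d_j$'s for the full splitting tower can have $(0,\delta)\subseteq\dom(\varfun{\sqrt{d_j}})$ at every step. In that case the evaluated factorization $W^*(c,y)=\fun a(c)\prod_i(y-\fun{f_i}(c))$ is never available near $0$, your set $S$ is empty, and the dichotomy ``$\dom(\fun{f_i})\cap(0,\delta)$ is all of $(0,\delta)$ or finite'' cannot be rescued by uncountability of $S$ because $S$ was defined to be a subset of the domain in the first place. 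Theorem~\ref{qdrthM} also cannot be ``applied inductively to the radicands $d_j$'' unless one already knows $(0,\delta)\subseteq\dom(\ffun{d_j})$, which is precisely what is in question. So the ``sign-near-zero analysis'' you hope will close the circle is circular.

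The paper avoids this by not constructing a single abstract tower at all. In Proposition~\ref{limprop} it works with a sequence $c_j\to 0$ of transcendentals; for each $j$ it takes an honest \emph{real} tower $F(c_j)=K_0\subset\cdots\subset K_k\subseteq\mathbb R$ witnessing that $u_j$ is a real quadratic number, and Lemma~\ref{KeYlmm} pulls that real tower back to a tower over $F(x)$ in which, by construction, every radicand $d_\ell$ satisfies $\ffun{d_\ell}(c_j)>0$, hence $c_j\in\dom(\fun{f_j})$ and $u_j=\fun{f_j}(c_j)$. Only then does a pigeonhole (finitely many roots of $W$ in $\algc{F(x)}$) let one pass to a fixed $f$, and Lemma~\ref{lMMdom} converts ``infinitely many points of the domain accumulate at $0$'' into ``$(0,\epsilon)\subseteq\dom(\fun f)$''. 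To repair your argument you would essentially have to reprove this: replace the abstract $2$-group tower by the real towers over $F(c_j)$ and then pull back as in Lemma~\ref{KeYlmm}. Your detour through Proposition~\ref{szrk}\eqref{szrkc} and Galois degrees is therefore not needed for this theorem (the paper only uses that machinery later, in the Rational Parameter Theorem); the direct construction is both simpler and the only one that guarantees the domain condition you need.
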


Clearly, if  we require  the equality and the constructibility in \eqref{limthmb} and \eqref{limthmc}, respectively, for all elements $c$ of the interval $(d,d+\epsilon)$, not only for the transcendental ones,  then we obtain a corollary with simpler assumptions. However, we shall soon see that Theorem~\ref{limthm} is more useful than its corollary just mentioned.

\begin{remark} To show that \eqref{limthmb} in Theorem~\ref{limthm} is essential, we have the following example. As usual, $\lfloor\,\,\rfloor$ will stand for the (lower) integer part  function.
Let $\epsilon=1$, $d=0$, and  $m=0$.
For $x\in(0,1)$, let $j(x)=-\lfloor\log_{10}(x)\rfloor\in \mathbb Z$. We define $u(x)$ by   $u(x):= \lfloor 10^{j(x)}\cdot \pi \rfloor \cdot 10^{-j(x)}$, where $e=  3.141\, 592\, 653\, 589\, 793\dots$, as usual.
For example,  if  $ x=\pi/10^7= 0.000\,000\,314\, 159\, \dots$, then  
$j(x)= 7$ and $u(x)= 3.141\, 592\, 6\in\mathbb Q$. Clearly,  $\lim_{x\to 0+0} u(x)=\pi\in\mathbb R$ and, for every $c\in (0,1)$, $u(c)\in \mathbb Q$ is constructible.  Although $u(c)$ is the root of the rational polynomial $x-u(c)$ with constructible coefficients, this polynomial  depends on $u(c)$.  All assumptions of Theorem~\ref{limthm} hold except  \eqref{limthmb}. 
If the theorem held without assuming  \eqref{limthmb}, then $\pi$ would be geometrically constructible over $\mathbb Q$, which would contradict \eqref{KhaTfj} by Lindemann~\cite{lindemann}. 
Alternatively, it would  be a contradiction because Squaring the Circle impossible.
Therefore, \eqref{limthmb} cannot be omitted from  Theorem~\ref{limthm}.
\end{remark}

\begin{remark}\label{whGnCmeans} 
Consider the function $u\colon (0,1)\to\mathbb Q$ from the previous remark. For each $c\in(0,1)$, $u(c)$ is constructible from $0$, $1$, and $c$. However,   based on Theorem~\ref{limthm}, it is not hard to prove that    $u(x)$ is not constructible from $0$, $1$, and $x$ in the sense of \eqref{pzCrVy}; the details of this proof are omitted. This example points out that the meaning of ``in general not constructible'' needs a definition; an appropriate definition will be given in Remark~\ref{schRfnl}.
\end{remark}

\begin{remark}\label{rbhzpGbF} 
The assumption that  $d$ is  geometrically  constructible from   $a_1,\dots, a_m$  cannot be omitted from Theorem~\ref{limthm}. 
\end{remark}

\begin{proof}[Proof of Remark~\ref{rbhzpGbF}] Our argument is based on   Proposition~\ref{prop100}. Suppose, for a contradiction, that Theorem~\ref{limthm}  without assuming that $d$ is constructible, referred to as the \emph{forged theorem}, holds.   Let $d_1,d_2,d_3$ be arbitrary positive real numbers such that $D_3(d_1,d_2,d_3)$ exists. Note that $\tuple{3,d_1,d_2,d_3}$ will play the role of $\tuple{m,a_1,\dots,a_m}$ in the forged theorem.  Denote by  $r(d_1,d_2,d_3)$  the radius of the circumscribed circle of $D_3(d_1,d_2,d_3)$, and let $d:=- r(d_1,d_2,d_3)$.  By geometric reasons,  there is a positive $\epsilon\in\mathbb R$ such that 
 $D_4(d_1,d_2,d_3, -x)$ exists for all $x\in (d,d+\epsilon)$; just think of an infinitesimally small fourth side. Let $u(x)$ denote  
the radius of the circumscribed circle of $D_4(d_1,d_2,d_3, -x)$. We know from  Proposition~\ref{prop100} that  $D_4(d_1,d_2,d_3, -c)$ is constructible from $d_1,d_2,d_3,c$, provided $c\in (d,d+\epsilon)$. Hence, $u(c)$ is also constructible and   \eqref{limthmc} of the forged theorem holds.  It is straightforward to conclude from  \eqref{dGtRwYs} that so does \eqref{limthmb}, while the satisfaction of \eqref{limthma} is evident. Since the geometric dependence of $u(x)$ on $x$ is continuous,  $\lim_{x\to d+0} u(x) =r(d_1,d_2,d_3)\in\mathbb R$. So, \eqref{limthmd} is also satisfied. Hence, by the forged theorem, $r(d_1,d_2,d_3)=\lim_{x\to d+0} u(x)$ is geometrically constructible from $d_1,d_2,d_3$. Therefore, $D_3(d_1,d_2,d_3)$ is also  constructible from $d_1,d_2,d_3$. However, this contradicts Proposition~\ref{prop100}, completing the proof of Remark~\ref{rbhzpGbF}.
\end{proof}

We will derive Theorem~\ref{limthm} from the following, more technical statement.

\begin{proposition}\label{limprop}
Let $F$ be a subfield of $\,\mathbb R$ such that  $F$ is closed with respect to real square roots, and let $d\in F$. Assume that  $W\in F[x,y]\setminus \set 0$ is a nonzero polynomial. If $\tuple{c_j:j\in\mathbb N}$  and $\tuple{u_j:j\in\mathbb N}$ are sequences of real numbers such that 
\begin{enumeratei}
\item $c_j$ is transcendental over $F$ and $d<c_j$,  for all $j\in \mathbb N$, 
\item for all $j\in \mathbb N$, $u_j$   is a real quadratic number over $F(c_j)$,  see \eqref{qxttwR},
\item  $W(c_j,u_j)=0$, for all $j\in \mathbb N$, and
\item $\lim_{j\to\infty} c_j=d$ and   $\lim_{j\to\infty} u_j\in \mathbb R$,
\end{enumeratei}
then $\lim_{j\to\infty} u_j\in F$. 
\end{proposition}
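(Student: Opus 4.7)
The plan is to apply Theorem~\ref{qdrthM} to a single element $f$ of a real iterated quadratic extension of $F(x)$, after several reductions by pigeonhole. Start by translating so that $d=0$; this is legitimate since $d\in F$. Factor $W$ in $F[x,y]$ into irreducibles, and since each $u_j$ is a root of one such factor at $c_j$, pigeonhole allows one to assume $W$ is irreducible. Since $c_j$ is transcendental over $F$, the isomorphism $F(x)\cong F(c_j)$ fixing $F$ and sending $x\mapsto c_j$ implies that $W(c_j,y)$ is irreducible in $F(c_j)[y]$, so up to its leading coefficient in $y$ it is the minimal polynomial of $u_j$ over $F(c_j)$. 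Because $u_j$ is real quadratic, this minimal polynomial has $2$-power degree; hence $n:=\deg_y W$ is a power of $2$. Moreover, Proposition~\ref{szrk}\eqref{szrkc} applied to $u_j$ forces the splitting field of $W(c_j,y)$ over $F(c_j)$ to have $2$-power degree, and transferring via the isomorphism, the splitting field $L$ of $W(x,y)$ over $F(x)$ is a Galois $2$-extension. Since every finite $2$-group is solvable with all composition factors of order $2$, $L/F(x)$ is an iterated quadratic extension.

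The next step is to realize $L$ as $F_k\bix$ for a tower of length $k$ as in \eqref{towerxf} and factor $W(x,y)=A(x)\prod_{i=1}^n(y-g_i)$ with $g_i\in L$. The tower is abstract, but its iterated square roots admit $2^k$ sign choices when turned into real-valued functions via Definition~\ref{deffffun}, and each choice produces a $\fun{\cdot}$ with a (possibly small) domain. Since $u_j$ is real and $W(c_j,u_j)=0$, for each $j$ there exist an index $i$ and a sign choice for which the corresponding $\fun{g_i}(c_j)$ equals $u_j$. With only finitely many $(i,\textup{sign})$ pairs, pigeonhole yields a single pair such that $\fun{g_i}(c_j)=u_j$ for infinitely many $j$; pass to this subsequence and set $f:=g_i$.

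By construction $\fun{f}$ is defined on the infinite set of points $c_j$ with $c_j\to 0^+$, and it is continuous where defined; combined with the fact that $\fun{f}$ represents a real branch of the algebraic curve $W(x,y)=0$ accumulating at $(0,\lim u_j)$, one concludes that a strict right neighborhood of $0$ lies in $\dom(\fun{f})$. Theorem~\ref{qdrthM} then yields
\begin{equation*}
\fun{f}(x)=\sum_{s=t}^\infty b_s\cdot x^{s\cdot 2^{-k}}
\end{equation*}
with every $b_s\in F$, valid in some strict right neighborhood of $0$. Since $\lim_{j\to\infty}\fun{f}(c_j)=\lim u_j$ is finite, we must have $t\ge 0$; the limit of the series as $x\to 0^+$ is then $b_0\in F$ when $t=0$ and $0\in F$ when $t>0$. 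Either way, $\lim u_j\in F$.

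The main obstacle is the intermediate paragraph: after pigeonholing on $(i,\textup{sign})$-pairs one knows $\fun{f}(c_j)=u_j$ only on the discrete set $\{c_j\}$, whereas Theorem~\ref{qdrthM} requires $\fun{f}$ to be defined on an entire right neighborhood of $0$. Establishing this requires exhibiting a consistent real branch of $W(x,y)=0$ passing through infinitely many $(c_j,u_j)$ and extending continuously to the right of $0$; the accumulation of these points at $(0,\lim u_j)$, together with Corollary~\ref{corTHSM} to rule out that the associated tower degenerates on the right neighborhood of $0$, is what delivers the required extension of the domain.
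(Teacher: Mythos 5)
You have the right high-level plan (produce one $f\in F_k\bix$ with $\fun f(c_j)=u_j$ for infinitely many $j$ and invoke Theorem~\ref{qdrthM}), but the middle of the argument — building a tower out of the splitting field $L$ of $W$ over $F(x)$ — does not work, and this is exactly where the paper's route diverges. The obstruction is not a technicality about signs: the splitting field $L$ may simply fail to admit \emph{any} realization as a tower \eqref{towerxf} whose associated functions are defined on a strict right neighborhood of $0$. For example, take $W(x,y)=y^4-2y^2-x$. For small $x>0$ its real roots are $\pm\sqrt{1+\sqrt{1+x}}$ and the other two roots are complex; the splitting field contains $\sqrt{1+\sqrt{1+x}}\cdot\sqrt{1-\sqrt{1+x}}=\sqrt{-x}$, so \emph{every} tower presenting $L$ must at some stage adjoin $\sqrt{d_i}$ with $\ffun{d_i}<0$ on every right neighborhood of $0$. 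By Definition~\ref{deffffun}(iv), $\dom(\fun f)$ is always cut down by $\dom(\varfun{\sqrt{d_i}})$, even when $f$ lies in the sub-tower not involving $\sqrt{d_i}$. Consequently, for any presentation of $L$ as $F_k\bix$ the functions $\fun{g_i}$ are all empty on $(0,\epsilon)$, so no $(i,\textup{sign})$ pair satisfies $\fun{g_i}(c_j)=u_j$ at all, and your pigeonhole step has nothing to pick. Your proposed fix via continuity and Corollary~\ref{corTHSM} addresses uniqueness of branches, not the size of the domain, and so does not repair this. (There are also secondary issues: Definition~\ref{deffffun} already fixes the non-negative branch, so it does not literally furnish $2^k$ ``sign choices''; and Proposition~\ref{szrk}\eqref{szrkc} is stated for finitely generated $K$, whereas $F(c_j)$ need not be finitely generated — though this could be patched.)

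The paper avoids all this by never looking at the splitting field of $W$. For each $j$ it starts from the \emph{given real} tower $F(c_j)=K_0\subset\cdots\subset K_k\subseteq\mathbb R$ that witnesses ``$u_j$ is a real quadratic number over $F(c_j)$,'' and Lemma~\ref{KeYlmm} mirrors that tower back over $F(x)$ so that each $d_i$ is positive at $c_j$ by construction; this makes $c_j\in\dom(\fun{f_j})$ automatic. The pigeonhole is then performed not on $(i,\textup{sign})$ pairs but on the finitely many roots of $W$ (lifted to $F_k\bix[y]$) inside the algebraic closure $\algc{F(x)}$, after which Lemma~\ref{lMMdom} (finitely many intervals for $\dom(\fun f)$) together with $c_j\to 0^+$ gives a strict right neighborhood of $0$ inside $\dom(\fun f)$. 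Note also that the paper does not need $W$ irreducible, nor the observations that $\deg_y W$ and the splitting-field degree are powers of $2$; these are true but play no role. The crucial missing ingredient in your proposal is Lemma~\ref{KeYlmm}, i.e., the idea of transferring the real tower from $F(c_j)$ rather than descending from the splitting field of $W$.
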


As a preparation to the proof of Proposition~\ref{limprop}, we need two lemmas. In the first of them, all sorts of intervals, for example,  $[a,b)$, $(a,\infty)$, $(-\infty,b]$, $(a,b)$, etc., are allowed, and the empty union is the empty set.

\begin{lemma}\label{lMMdom}
If  $F$ is a subfield of $\mathbb R$, $k\in\mathbb N$, $F_k\bix $ is as in  \eqref{towerxf}, and $f\in F_k\bix $, then $\dom(\fun f)$ is the union of finitely many intervals and $\fun f$ is continuous on each of these intervals.
\end{lemma}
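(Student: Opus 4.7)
The plan is to prove the lemma by induction on $k$, exploiting the recursive definition of $\fun{f}$ and $\dom(\fun{f})$ given in Definition~\ref{deffffun}, together with Lemma~\ref{infntroots} to control the zero set of an intermediate radicand, and Lemma~\ref{szddLm} to transport continuity through the arithmetic operations.

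For the base case $k=0$, write $f=f_1/f_2\in F(x)$ with $f_1,f_2\in F[x]$ relatively prime; then $\dom(\fun{f})=\mathbb R\setminus S$ where $S$ is the finite set of real roots of $f_2$, so $\dom(\fun{f})$ is a finite union of open intervals, and $\fun{f}$ is a quotient of polynomial functions with non-vanishing denominator on each of them, hence continuous.

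For the inductive step, suppose $k\geq 1$ and the claim holds for $k-1$. Using \eqref{unqcnrp} as in Definition~\ref{deffffun}\eqref{deffffund}, write $f=f_1+f_2\sqrt{d_k}$ with $f_1,f_2,d_k\in F_{k-1}\bix$, where $d_k$ is not a square in $F_{k-1}\bix$; in particular $d_k\neq 0$. By the inductive hypothesis, each of $\dom(\fun{f_1})$, $\dom(\fun{f_2})$, and $\dom(\ffun{d_k})$ is a finite union of intervals, on each of whose components the corresponding function is continuous. The main step is to analyse $\dom(\varfun{\sqrt{d_k}\,})=\{r\in\dom(\ffun{d_k}):\ffun{d_k}(r)\geq 0\}$. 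Since $d_k\neq 0$ in $F_{k-1}\bix$, Lemma~\ref{infntroots} gives that $\ffun{d_k}$ has only finitely many zeros in its domain. On each of the finitely many intervals $I$ comprising $\dom(\ffun{d_k})$, the restriction $\ffun{d_k}\restriction I$ is continuous and has only finitely many zeros, so these zeros subdivide $I$ into finitely many subintervals on each of which $\ffun{d_k}$ has constant sign; the union of the subintervals on which $\ffun{d_k}\geq 0$ (together with the finitely many isolated zeros that separate sign changes) is therefore a finite union of intervals within $I$. Taking the union over the finitely many components $I$, we conclude that $\dom(\varfun{\sqrt{d_k}\,})$ is a finite union of intervals; on each of them $\varfun{\sqrt{d_k}\,}=\sqrt{\ffun{d_k}\,}$ is continuous, being the composition of the continuous non-negative function $\ffun{d_k}$ with the continuous function $\sqrt{\phantom o}\colon [0,\infty)\to\mathbb R$.

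Finally, $\dom(\fun{f})=\dom(\fun{f_1})\cap\dom(\fun{f_2})\cap\dom(\varfun{\sqrt{d_k}\,})$ is a finite intersection of finite unions of intervals, which is again a finite union of intervals. On any such component, each of the three intersected sets contains a whole subinterval on which $\fun{f_1}$, $\fun{f_2}$, and $\varfun{\sqrt{d_k}\,}$ are continuous; by Lemma~\ref{szddLm}, $\fun{f}(r)=\fun{f_1}(r)+\fun{f_2}(r)\cdot\varfun{\sqrt{d_k}\,}(r)$ on this component, so continuity of $\fun{f}$ there follows from the continuity of sums and products of continuous real-valued functions. This completes the induction.

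The only non-routine point is verifying that $\{r\in\dom(\ffun{d_k}):\ffun{d_k}(r)\geq 0\}$ is a finite union of intervals, and this is exactly where Lemma~\ref{infntroots} is essential: without the algebraic fact that $\ffun{d_k}$ has only finitely many zeros (which rests on $d_k\neq 0$ in $F_{k-1}\bix$, itself a consequence of $d_k$ not being a square there), the sign-pattern could have infinitely many switches inside a single component of $\dom(\ffun{d_k})$ and the conclusion would fail.
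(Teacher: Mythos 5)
Your proof is correct and follows essentially the same route as the paper: induction on $k$, using Lemma~\ref{infntroots} to bound the zeros of $\ffun{d_k}$, then continuity and the interval-union structure from the inductive hypothesis. One tiny imprecision: the identity $\fun f(r)=\fun{f_1}(r)+\fun{f_2}(r)\cdot\varfun{\sqrt{d_k}\,}(r)$ is simply Definition~\ref{deffffun}\eqref{deffffund} (not Lemma~\ref{szddLm}, which addresses sums and products of arbitrary $f,g\in F_k\bix$ rather than the canonical decomposition), but this does not affect the validity of the argument.
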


\begin{proof} We use  induction on $k$. 
If $f\in F_0(x)=F(x)$, then $f=f_1/f_2$ where $f_1,f_2\in F[x]$ are polynomials. Hence, $\fun{f_2}$ only has finitely many (real) roots and the statement for $k=0$ is clear. Next, assume that $k>0$ and that the lemma holds for $k-1$. Lemma~\ref{infntroots} gives that $\ffun{d_k}$ has only finitely many roots. By the induction hypothesis,  $\dom(\ffun{d_k})$ is the union of finitely many intervals and $\ffun{d_k}$ is continuous on each of these intervals. Thus, it follows from continuity that $\dom(\varfun{\sqrt{d_k}\,})$ is the union of finitely many intervals; see Definition~\ref{deffffun}\eqref{deffffunc}. 
Furthermore, by the continuity of $\sqrt{\phantom o}$ on $[0,\infty)$,   $\varfun{\sqrt{d_k}\,}$ is continuous on these intervals.  Now, addition and multiplication preserves continuity. Furthermore, the intersection of two (and, consequently, three) sets that are unions of finitely many intervals is again the union of finitely many intervals. Therefore, see Definition~\ref{deffffun}\eqref{deffffund}, the lemma holds for $f=f_1+f_2\cdot \sqrt{d_k} \in F_k\bix $.
\end{proof}

The \emph{restriction} of a map $\phi$ to a set $A$ is denoted by $\restrict \phi A$. The \emph{identity map} $A\to A$,  defined by $a\mapsto a$ for all $a\in A$, is denoted by $\idmap A$.

\begin{lemma}\label{KeYlmm}
Let $F$ be a subfield of $\mathbb R$, and let $c\in \mathbb R$ be a transcendental element over $F$. Assume that
\begin{equation}
F(c)=K_0\subset K_1\subset \cdots \subset K_k
\label{kTwrnlM}
\end{equation}
is a tower of quadratic field extensions such that $K_k\subseteq \mathbb R$. Then there exists a tower \eqref{towerxf} of quadratic field extensions over $F(x)$ of length $k$ and there are field isomorphisms $\phi_j\colon F_j\bix \to K_j$ for $j\in\set{0,\dots k}$ such that 
\begin{enumeratei}
\item\label{KeYlmma} $\restrict{\phi_0}F=\idmap F$, $\phi_0(x)=c$, and for all $j\in\set{1,\dots k}$,   $\restrict{\phi_j}{F_{j-1}\bix }=\phi_{j-1}$; 
\item\label{KeYlmmb} for all $j\in\set{0,\dots k}$ and $f\in F_j\bix $, we have $c\in\dom(\fun f)$ and $\phi_j(f)=\fun f(c)$.
\end{enumeratei}
\end{lemma}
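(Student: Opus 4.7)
The plan is to build the tower $F_0\bix\subset F_1\bix\subset\cdots\subset F_k\bix$ and the isomorphisms $\phi_j$ simultaneously by induction on $j$, transplanting each quadratic step in \eqref{kTwrnlM} back to a quadratic step over $F(x)$ via \eqref{uniquesmplalg} and \eqref{uniquesmpltrans}, and then checking at each stage that the algebraic image of $f$ agrees with the evaluation $\fun f(c)$.

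For the base case $j=0$, I would set $F_0\bix=F(x)$ and invoke the uniqueness theorem for simple transcendental extensions \eqref{uniquesmpltrans} (with $F'=F$, $\phi=\idmap F$, $c'=c$) to produce the unique isomorphism $\phi_0\colon F(x)\to F(c)$ which is the identity on $F$ and sends $x$ to $c$. Part \eqref{KeYlmma} at level $0$ is immediate. For \eqref{KeYlmmb} at level $0$, write $f=f_1/f_2$ with $f_1,f_2\in F[x]$ relatively prime; since $c$ is transcendental over $F$ and $f_2\neq 0$ in $F[x]$, we have $\fun{f_2}(c)\neq 0$, so $c\in\dom(\fun f)$, and because $\phi_0$ is a ring homomorphism, $\phi_0(f)=\fun{f_1}(c)/\fun{f_2}(c)=\fun f(c)$.

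For the inductive step, assume $\phi_{j-1}\colon F_{j-1}\bix\to K_{j-1}$ has been constructed and satisfies \eqref{KeYlmma}–\eqref{KeYlmmb}. Because $K_j$ is a quadratic extension of $K_{j-1}$, pick $e_j\in K_{j-1}$ with $K_j=K_{j-1}(\sqrt{e_j})$ and $e_j$ not a square in $K_{j-1}$. Set $d_j:=\phi_{j-1}^{-1}(e_j)\in F_{j-1}\bix$; since $\phi_{j-1}$ is an isomorphism, $d_j$ is not a square in $F_{j-1}\bix$, so $F_j\bix:=F_{j-1}\bix(\sqrt{d_j})$ is a genuine quadratic extension, extending the tower \eqref{towerxf}. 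Since $K_j\subseteq\mathbb R$, the element $e_j$ is nonnegative; let $s\in\mathbb R$ denote its nonnegative real square root, which lies in $K_j$. Apply \eqref{uniquesmplalg} to obtain the unique field isomorphism $\phi_j\colon F_j\bix\to K_j$ extending $\phi_{j-1}$ and sending $\sqrt{d_j}$ to $s$. This gives \eqref{KeYlmma} at level $j$.

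The main thing to check, and the one subtle point, is \eqref{KeYlmmb} at level $j$; the subtlety lies in matching the sign conventions so that the formal symbol $\sqrt{d_j}$ really evaluates to the same real number as $\phi_j(\sqrt{d_j})$. By the induction hypothesis applied to $d_j$, $c\in\dom(\ffun{d_j})$ and $\ffun{d_j}(c)=\phi_{j-1}(d_j)=e_j\geq 0$, so Definition~\ref{deffffun}\eqref{deffffunc} gives $c\in\dom(\varfun{\sqrt{d_j}\,})$ and $\varfun{\sqrt{d_j}\,}(c)=s=\phi_j(\sqrt{d_j})$. For a general $f\in F_j\bix$, write $f=f_1+f_2\sqrt{d_j}$ with $f_1,f_2\in F_{j-1}\bix$ (uniquely, by \eqref{unqcnrp}); the induction hypothesis yields $c\in\dom(\fun{f_1})\cap\dom(\fun{f_2})$, so $c\in\dom(\fun f)$ by Definition~\ref{deffffun}\eqref{deffffund}, and
\[
\phi_j(f)=\phi_{j-1}(f_1)+\phi_{j-1}(f_2)\cdot s=\fun{f_1}(c)+\fun{f_2}(c)\cdot\varfun{\sqrt{d_j}\,}(c)=\fun f(c),
\]
completing the induction and the proof.
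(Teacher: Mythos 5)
Your proof is correct and follows essentially the same route as the paper: build the tower and the isomorphisms by induction, using \eqref{uniquesmpltrans} for the base case and \eqref{uniquesmplalg} for the inductive step, choosing $d_j=\phi_{j-1}^{-1}(e_j)$ and sending $\sqrt{d_j}$ to the nonnegative real square root of $e_j$ so that the formal and evaluated square roots agree. Your explicit remark about matching sign conventions at $\sqrt{d_j}$ is precisely the point the paper handles by asserting $e_k>0$ and evaluating $\varfun{\sqrt{d_k}\,}$ as the nonnegative root.
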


\begin{proof} We prove the lemma by induction on $k$. By \eqref{uniquesmpltrans}, $\idmap F$ extends to an isomorphism $\phi_0\colon F(x)\to F(c)$ such that  $\phi_0(x)=c$  and $\restrict{\phi_0}F=\idmap F$. 
Thus, for every $g=\sum a_ix^i\in F[x]$, we have that
$\phi_0(g)=\sum \phi_0(a_i)\phi_0(x)^i = \sum a_i c^i = \ffun  g(c)$. 
Since $c$ is transcendental over $F$, $\fun f_2(c)\neq 0$ holds for all $f_2\in F[x]\setminus\set0$. Hence, we obtain from  Definition~\ref{deffffun}\eqref{deffffuna}--\eqref{deffffunb} that $c\in\dom(\fun f)$ and $\phi_0(f)=\fun f(c)$ hold for all $f\in F(x)=F_0(x)$. This settles the base, $k=0$, of the induction.

Next, assume that  $k>0$ and the lemma holds for $k-1$. By  \eqref{sfExt} and $K_k\subseteq \mathbb R$, there exists a positive real number $e_k\in K_{k-1}$ such that $K_k=K_{k-1}(\sqrt{e_k})$ and $e_k$ is not a square in $K_{k-1}$. Let $d_k=\phi_{k-1}^{-1}(e_k)$. Since $\phi_{k-1}\colon F_{k-1}\bix \to K_{k-1}$ is an isomorphism, $d_k$ is not a square in $F_{k-1}\bix$. Furthermore, by the induction hypothesis,
\begin{equation}
c\in \dom(\ffun{d_k})\quad\text{and}\quad 0< e_k = \phi_{k-1}(d_k) = \ffun {d_k}(c)\text. 
\label{ekdkhghTg}
\end{equation}
Define $F_k\bix $ with the help of this $d_k$. That is, $F_k\bix :=F_{k-1}\bix (\sqrt{d_k}\,)$. It follows from  \eqref{unqcnrp} that each element $f\in F_k\bix $ can be written uniquely in the canonical form $f=f_1+f_2\cdot \sqrt{d_k}$, where $f_1,f_2\in F_{k-1}\bix $. 
By \eqref{uniquesmplalg}, $\phi_{k-1}$ extends to a (unique) isomorphism  $\phi_k\colon F_k\bix \to K_k$,  and $\phi_k$ is defined by the rule
\begin{equation}
\phi_k(f_1+f_2\cdot \sqrt{d_k})= \phi_{k-1}(f_1) + \phi_{k-1}(f_2) \cdot \sqrt{e_k}\text.
\label{dkszTbnWc}
\end{equation}
Clearly, Part \eqref{KeYlmma} of the lemma holds. 
By the induction hypothesis, $c$ belongs to $\dom(\fun{f_1})\cap \dom(\fun{f_2})$. Combining this containment with \eqref{ekdkhghTg} and Definition \ref{deffffun}\eqref{deffffund}, we obtain that $c\in\dom (\fun f)$, for all $f=f_1+f_2\cdot \sqrt{d_k}\in F_k\bix $, written in canonical form. Hence, using \eqref{dkszTbnWc},   Definition \ref{deffffun}\eqref{deffffunc}--\eqref{deffffund}, 
\allowdisplaybreaks{
\begin{align*}
\phi_k(f)&= \phi_{k-1}(f_1) + \phi_{k-1}(f_2) \cdot \sqrt{e_k} = \fun{f_1}(c) + \fun{f_2}(c)\cdot  \sqrt{  \ffun {d_k}(c) }\cr
& = \fun{f_1}(c) + \fun{f_2}(c)\cdot  \varfun{ \sqrt{ d_k\,}} (c) =\fun f(c),
\end{align*}
}
as required. This completes the induction step and the proof of Lemma~\ref{KeYlmm}.
\end{proof}

\begin{proof}[Proof of Proposition~\ref{limprop}]
First, we only deal with the case  $d=0$.  
We can assume that $\lim_{j\to \infty}u_j\neq 0$, since otherwise $\lim_{j\to \infty}u_j\in F$ needs no proof. 
Consequently, for all but finitely many $j\in\mathbb N$, $u_j\neq 0$. After omitting finitely many initial members from our sequences, we can assume that, 
\begin{equation}
\text{for all $j\in\mathbb N$, $u_j\neq 0$.}
\label{ujnotzerus}
\end{equation}
Let $\algc{F(x)}$ denote 
the \emph{algebraic closure} of the field $F(x)$.  Note that no matter how we choose the $d_j$'s in \eqref{towerxf}, $F_k\bix \subseteq \algc{F(x)}$.
Consider one of our transcendental numbers, $c_j$, where  $j\in \mathbb N$. Since $u_j$ is a real quadratic number over $F(c_j)$, there exists a tower  \eqref{kTwrnlM}, with $c_j$ in place of $c$,  of quadratic field extensions such that $u_j\in K_k$. This tower and even its length $k$ may depend on $j$. By Lemma~\ref{KeYlmm}, we obtain a tower of quadratic field extensions \eqref{towerxf} together with an isomorphism $\phi_k^{(j)}\colon F_k\bix \to K_k$ and  an $f_j\in F_k\bix $, both depending on $j$,   such that 
\begin{equation}
\restrict {\phi_k^{(j)}}F=\idmap F,\text{ } \phi_k^{(j)}(x)=c_j, \text{ } c_j\in\dom(\fun{f_j}), \text{ and } u_j=  \phi_k^{(j)}(f_j) = \fun{f_j}(c_j)\text.
\label{YtthWTr}
\end{equation}
We can write $W$ in the unique form $W(x,y)=\sum_{\pair it\in B}a_{it}x^iy^t$, where $B$ is a finite non-empty subset of $\mathbb N_0\times \mathbb N_0$ and $a_{it}\in F$ for all $\pair it\in B$. We define $b_t:= \sum_{\set{i: \pair it\in B}} a_{it} x^i\in F[x]\subseteq F(x)\subseteq F_k\bix $ and $\ezg= \sum_{t\in \mathbb N_0} b_ty^t \in F_k\bix [y]$. This sum is  finite and  $\ezg(f_j)\in F_k\bix $.  
Actually, $g$ corresponds to $W$ under the canonical $F[x,y]\to F_k\bix[y]$ embedding.
Note that $\ezg$ does not depend on $j$.
(Note also that we cannot write  $\ffun\ezg$ here since $\ffun\ezg$ stands for a real-valued function.)  Observe that $g\neq 0$ in $F_k\bix[y]$, because $W\neq 0$.  Using     \eqref{YtthWTr}, we obtain that 
\begin{align*}
\phi_k^{(j)}\bigl( \ezg(f_j)\bigr) &= \phi_k^{(j)}\bigl(\sum_{t\in \mathbb N_0} b_t f_j^t\bigr) = \phi_k^{(j)}\Bigl(\sum_{t\in \mathbb N_0} \bigl( \sum_{\set{i: \pair it\in B}} a_{it} x^i \bigr)f_j^t\Bigr)\cr
&= \phi_k^{(j)}\bigl(\sum_{\pair it\in B}a_{it}x^i f_j^t\bigr) 
 = \sum_{\pair it\in B} \phi_k^{(j)}(a_{it})\, \bigl(\phi_k^{(j)}(x)\bigr)^i\, \phi_k^{(j)}(f_j)^t \cr
&=  \sum_{\pair it\in B} a_{it}\, c_j^i\, u_j^t =W(c_j,u_j)=0\text.
\end{align*}  
This implies that $g(f_j) =0$. Observe that $f_j\in F_k\bix \subseteq \algc{F(x)}$ and that $\ezg\in F_k\bix [y]\subseteq  \algc{F(x)}[y]$.  Since $g$, as a polynomial over $\algc{F(x)}$, has only finitely many roots in $\algc{F(x)}$,   $\set{f_j: j\in\mathbb N}$ is a finite subset of $\algc{F(x)}$. Therefore, after thinning the sequence $\tuple{c_j: j\in\mathbb N}$ if necessary, we can assume that $f_j$ does not depend on $j$. Thus, we can let $f:=f_j\in F_k\bix $. This allows us to assume that, from now on, $F_k\bix $ and the tower \eqref{towerxf} do not depend on $j$.    We obtain from \eqref{YtthWTr} that 
\begin{equation}
c_j\in\dom(\fun f)\text{ and } u_j=\fun f(c_j),\text{ for all }j\in\mathbb N\text.
\label{skdnDpj}
\end{equation}
We obtain from \eqref{ujnotzerus} and \eqref {skdnDpj} that  $f\neq 0$ in $F_k\bix $. Combining  Lemma~\ref{lMMdom} with    \eqref{skdnDpj},  $0=d<c_j$,   and $\lim_{j\to\infty}c_j=0$, we conclude that $(0,\epsilon)\subseteq \dom(\fun f)$ for some positive $\epsilon\in \mathbb R$. 
Therefore, Theorem~\ref{qdrthM}  applies and 
\begin{equation} 
\fun f(x) = \sum_{\ell =t}^\infty b_\ell\cdot x^{\ell \cdot 2^{-k}} =:\Sigma_1(x) \text{\quad in some strict right neighborhood of }0,
\label{bcksfCs}
\end{equation}
where $b_t\neq 0$,  $b_t, b_{t+1}, b_{t+2}\dots \in F$, and $t,b_t, b_{t+1}, b_{t+2}\dots $ are uniquely determined.  
We claim that $t$ in \eqref{bcksfCs} is non-negative. Suppose the contrary. Then the function 
\begin{equation}
f_1(z):= \sum_{\ell =t}^\infty b_\ell\cdot z^\ell
\label{f1fncTn}
\end{equation}
has a pole at 0.  It is straightforward to see  and we also know from  Spiegel et al \cite[page 175]{schaumcomplex} that $\lim_{z\to 0}|f_1(z)|=\infty$. Therefore, since $\lim_{j\to\infty} c_j^{2^{-k}}=0$ 
and since \eqref{bcksfCs} gives $\fun f(x) =f_1(x^{2^{-k}})$ in some strict right neighborhood of 0, we obtain that 
\[\lim_{j\to\infty } |u_j| = \lim_{j\to\infty } |\fun f(c_j)| = \lim_{j\to\infty }  |f_1(c_j^{2^{-k}})|=\infty\text.
\]
This contradicts the assumption $\lim_{j\to 0} u_j\in\mathbb R$. Thus, $t$ in \eqref{bcksfCs} is non-negative. Next, consider the power series $\Sigma_2(y):=\sum_{\ell=0}^\infty b_\ell \cdot y^\ell$ where, for $\ell<t$, we let $b_\ell:=0$. Since $\Sigma_2(y)= \Sigma_1(y^{2^k})$, $\Sigma_2(y)$ converges in some strict right neighborhood of 0. Therefore,  the radius of convergence of $\Sigma_2(y)$ is positive and $\Sigma_2(y)$ is continuous at   0; see, for example,  Wrede and Spiegel~\cite[page 272]{schaumreal}.
 Hence, using \eqref{skdnDpj}, \eqref{bcksfCs}, \eqref{f1fncTn}, and $\lim_{j\to \infty}c_j^{2^{-k}}=0$,
\begin{align*}
\lim_{j\to \infty}u_j= \lim_{j\to \infty}\fun f(c_j) = \lim_{j\to \infty}\Sigma_1(c_j)=  
\lim_{j\to \infty}\Sigma_2(c_j^{2^{-k}}) 
= \Sigma_2(0)=b_0\in F.
\end{align*}
This proves Proposition~\ref{limprop} in the particular case $d=0$.

Finally, if $d \in F$ is not necessarily 0, then we let $d':=0$,   $c_j':=c_j-d$,   $u_j':=u_j$, and $W'(x,y):=W(x+d,y)$. It is straightforward to see that the primed objects satisfy the conditions of Proposition~\ref{limprop}. Hence, applying the particular case, we conclude that $\lim_{j\to \infty}u_j=\lim_{j\to \infty}u'_j\in F$.
\end{proof}

\begin{proof}[Proof of Theorem~\ref{limthm}] With the assumptions of the theorem, let $F$ be the real quadratic closure of the field $\mathbb Q(a_1,\dots, a_m)$.  Since $d$ and the coefficients in $W$ are constructible from $a_1,\dots,a_m$, \eqref{bdWThrT} implies that  $d$ and these coefficients belong to $F$. So, $W$ is  polynomial over $F$ of two indeterminates. 
Since $F$ is a  countable field, so is  its algebraic closure, $\algc F$. Thus, the set of transcendental numbers over $F$ is everywhere dense in $\mathbb R$. Hence, we can pick a sequence $\tuple{c_j:j\in\mathbb N}$ of transcendental numbers over $F$ such that $c_j\in (d,d+\epsilon)$ for all $j\in\mathbb N$ and 
$\lim_{j\to\infty} c_j=d$. Define $u_j$ by $u_j:=u(c_j)$; it is constructible from $c_j,a_1,\dots,a_m$ by the assumptions. Using \eqref{bdWThrT}, we obtain that 
$u_j\in  \qclo {\mathbb Q(a_1,\dots, a_m,c_j)}= \qclo{ \bigl(\qclo {\mathbb Q(a_1,\dots, a_n)}(c_j)\bigr)}   =  \qclo{F(c_j) } $. 
That is, $u_j$ is a real quadratic number over $F(c_j)$, for all $j\in\mathbb N$. We have $W(c_j,u_j)=W(c_j,u(c_j))=0$. Condition \eqref{limthmd} of Theorem~\ref{limthm} yields that $\lim_{j\to\infty}u_j= \lim_{j\to\infty}u(c_j)$ exists and equals $\lim_{x\to d+0}u(x)\in\mathbb R$.
Now that all of its conditions are fulfilled, we can apply Proposition~\ref{limprop}. In this way, we obtain that   $\lim_{x\to d+0}u(x) =  \lim_{j\to\infty}u_j \in F$. Therefore,  \eqref{bdWThrT}  yields that 
$\lim_{x\to d+0}u(x)$ is constructible from $a_1,\dots, a_m$.  This completes the proof of  Theorem~\ref{limthm}. 
\end{proof}

\section{The Limit Theorem at work}\label{limatworksection}
The aim of this section is to prove the following lemma. 

\begin{lemma}\label{realtwolemma} If $n\geq 8$, then there exists a  \emph{transcendental} number $0<c \in \mathbb R$ such that the cyclic $n$-gon 
\begin{equation}
P_{n}(c)=P_{n}(\underbrace{1, \dots, 1}_{\textup{$\ell $ copies}},\underbrace{c, \dots, c}_{\textup{$n-\ell$ copies}}),
\text{ where }\,\, \ell =
\begin{cases}
7,&\text{if }n\neq 14,\cr
9,&\text{if }n = 14,\cr
\end{cases}
\label{dlZrGtn}
\end{equation}
exists but it is not constructible from $1$ and $c$.
\end{lemma}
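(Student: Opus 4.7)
The plan is to apply the Limit Theorem (Theorem~\ref{limthm}) with no auxiliary parameters and $d=0$, taking $u(x):=1/(2r(x))$, where $r(x)$ is the circumradius of $P_n(x)$, and letting $x\to 0+$. The crucial ingredient is that $\ell\in\set{7,9}$ makes the regular $\ell$-gon non-constructible by the Gauss--Wantzel theorem ($7$ is not a Fermat prime, and $9$ is not square-free). Equivalently, $\sin(\pi/\ell)\notin\qclo{\mathbb Q}$.

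First I would verify via \eqref{shcrnqltS} that $P_n(c)$ exists for every $c>0$: the required inequalities $1<(\ell-1)+(n-\ell)c$ and $c<\ell+(n-\ell-1)c$ hold trivially since $\ell\geq 7$ and $n-\ell\geq 1$. Letting $\alpha(c)$ and $\beta(c)$ denote the two half central angles belonging to the sides of length $1$ and $c$ respectively, the identity \eqref{alppsn} yields $\ell\alpha(c)+(n-\ell)\beta(c)=\pi$, while $\sin\alpha(c)=u(c)$ and $\sin\beta(c)=cu(c)$. Since $u(c)\in(0,1]$, as $c\to 0+$ we have $\sin\beta(c)=cu(c)\to 0$, hence $\beta(c)\to 0$, $\alpha(c)\to\pi/\ell$, and therefore
\[
\lim_{c\to 0+}u(c)=\sin(\pi/\ell)\in\mathbb R\setminus\qclo{\mathbb Q}.
\]

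Next, define the two-variable polynomial $W(y,z):=\Wnone{\ell}{n-\ell}(1,y,z)\in\mathbb Z[y,z]$. For any real $c\neq 1$, Lemma~\ref{LmMrZT}\eqref{LmMrZTc} (applied with $a=1\neq c=b$) ensures that $\Wnone{\ell}{n-\ell}(1,c,z)\in\mathbb R[z]$ is not the zero polynomial; in particular $W\neq 0$ in $\mathbb Z[y,z]$. The same lemma gives $W(c,u(c))=0$ for every $c>0$ because the polygon $P_n(c)$ exists, and the coefficients of $W$ are integers, hence constructible over $\mathbb Q$. Suppose, for contradiction, that there is an $\epsilon>0$ such that $P_n(c)$ is constructible from $1$ and $c$ for every transcendental $c\in(0,\epsilon)$; equivalently, $u(c)\in\qclo{\mathbb Q(c)}$. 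Then all the hypotheses \eqref{limthma}--\eqref{limthmd} of Theorem~\ref{limthm} are met with $d=0$ and no parameters $a_i$. The theorem forces $\sin(\pi/\ell)=\lim_{c\to 0+}u(c)$ to lie in $\qclo{\mathbb Q}$, contradicting the non-constructibility of the regular $\ell$-gon. Therefore a transcendental $c\in(0,\epsilon)$ exists for which $P_n(c)$ is non-constructible from $1$ and $c$.

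The main technical point is not any single step but the careful assembly of the Limit Theorem's four hypotheses together with the correct identification of the limit; here the boundedness $u(c)\leq 1$ and the trigonometric constraint \eqref{alppsn} make the limit computation transparent. The choice $\ell=9$ in the exceptional case $n=14$ is not actually required for the argument above (it works equally well with $\ell=7$, $n-\ell=7$, since Lemma~\ref{LmMrZT}\eqref{LmMrZTc} still yields $W\neq 0$ whenever $a\neq b$), but it arranges $\ell\neq n-\ell$, a property that will be convenient for the subsequent application of Hilbert's irreducibility theorem that replaces the transcendental $c$ by a rational one.
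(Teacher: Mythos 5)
Your proof is correct and takes essentially the same route as the paper: suppose for contradiction that $P_n(c)$ is constructible for all transcendental $c$ near $0$, apply the Limit Theorem with the polynomial $\Wnone{\ell}{n-\ell}(1,x,y)$ from Lemma~\ref{LmMrZT}\eqref{LmMrZTc}, and derive a contradiction with the Gauss--Wantzel non-constructibility of the regular $\ell$-gon. Your more explicit computation of $\lim_{c\to 0+}u(c)=\sin(\pi/\ell)$ via \eqref{alppsn} and your remark that $\ell=9$ in the $n=14$ case is not needed here (only for the later Hilbert irreducibility step) are both accurate, though they match the paper's substance.
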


\begin{proof}[Proof of Lemma~\ref{realtwolemma}]  Let  $n\geq 8$ .  It follows from \eqref{shcrnqltS} that $P_n(b)$,  defined as the polygon in \eqref{dlZrGtn} with $b$ instead of $c$,   exists for every real number $b\in (0,1)$. 
Suppose, for a contradiction, that  $P_n(c)$  is constructible for all transcendental numbers $c\in(0,1)$. Define a function $u\colon (0,1)\to \mathbb R$ by $u(x)=1/(2r_n(x))$, where $r_n(x)$ denotes the radius of the circumscribed circle of $P_n(x)$; see \eqref{dlZrGtn}. We are going to use Theorem~\ref{limthm}. Condition  \eqref{limthma} of  
Theorem~\ref{limthm}, denoted by \ref{limthm}\eqref{limthma},  clearly holds with   $\tuple{0,1,2,0,1}$ playing the role of $\tuple{d,\epsilon,m,a_1,\dots,a_m}$. Since $P_n(c)$ is constructible  from  $0$,  $1$, and $c$ for all $c\in(0,1)$, so is $u(c)$. Thus,  \ref{limthm}\eqref{limthmc} is satisfied. 
The polynomial  $\Wnone \ell{m-\ell}(1,x,y)   \in\mathbb Z[x,y]\setminus\set 0$ from Lemma~\ref{LmMrZT}\eqref{LmMrZTc} shows that   \ref{limthm}\eqref{limthmb} also holds. 
Since the geometric dependence of $r_n(x)$ on $x$ is continuous, $r_n(x)$ tends to the radius $r_\ell$ of the regular $\ell$-gon with side length 1  as  $x\to 0+0$. Hence, $\lim_{x\to 0+0}u(x)=1/(2r_\ell)\in \mathbb R$, and \ref{limthm}\eqref{limthmd} holds. Thus,  we obtain from Theorem~\ref{limthm} that $1/(2r_\ell)$ is constructible from $0$ and $1$. Therefore, so is the regular $\ell$-gon, which  contradicts the Gauss--Wantzel theorem; see \cite{wantzel}.
\end{proof}

\begin{remark}\label{schRfnl} 
The proof of Lemma~\ref{realtwolemma} above shows how one can complete Schreiber's original argument. As it is pointed out in Remark~\ref{whGnCmeans}, first we need a definition. Rather then the opposite of \eqref{pzCrVy}, 
``in general not constructible'' in \eqref{schrng5} has to be understood as  ``there are concrete side lengths such that the cyclic $n$-gon exists but not constructible''.  Second,  one 
would need a generalization of Lemma~\ref{LmMrZT}\eqref{LmMrZTc}  for the case where the side lengths can be pairwise distinct.  
In this way, one could use Theorem~\ref{limthm}  to prove \eqref{schrng5}. 
Since we are proving a stronger statement, Theorem~\ref{thmour}, we do not elaborate these details.
\end{remark}

\begin{remark} The constructibility of the regular pentagon was already known by the classical Greek mathematics.  Now, combining  Theorem~\ref{limthm} with Theorem~\ref{thmour}\eqref{thmourbb} and Lemma~\ref{LmMrZT}\eqref{LmMrZTc}, we obtain  an unusual and quite complicated way to prove this well-known fact, because the radius of $P(1,1,1,1,1)$ is the 
limit of the radius of $P(1,1,1,1,1,y)$ as $y\to 0+0$.
\end{remark}

\section{Turning transcendental to rational}
\label{hilbertsection}
The aim of this section is to prove the following theorem.

\begin{theorem}[Rational Parameter Theorem for geometric constructibility] 
\label{ratiothm}
Let $I\subseteq \mathbb R$ be a nonempty open interval, let $a_1,\dots, a_m$ be arbitrary real numbers, and  let $u\colon I\to \mathbb R$ be a continuous function. Denote $\mathbb Q(a_1,\dots,a_m)$ by $K$, and assume that there exists a polynomial $W\in K[x,y]\setminus\set 0$ such that $W(x,u(x))=0$ holds for all $x\in I$. 
If  there exists a number $c\in I$  such  that $c$  is transcendental over $K$
and   $u(c)$ is geometrically non-constructible from   $a_1,\dots, a_m$ and $c$, then there also exists a \emph{rational} number $c'\in I$ such that   $u(c')$ is non-constructible from   $a_1,\dots, a_m$ and~$c'$.
\end{theorem}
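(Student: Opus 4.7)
The strategy is to reduce to the strong form of Hilbert's irreducibility theorem by first producing a single irreducible factor of $W$ that captures $u$ on an open subinterval containing $c$. Set $K := \mathbb{Q}(a_1,\dots,a_m)$ and factor $W = P_1 \cdots P_s$ in $K(x)[y]$ (after clearing denominators each $P_j$ may be taken in $K[x,y]$). First I would pick a factor $P = P_j$ for which $P(c, u(c)) = 0$, which exists since $W(c, u(c)) = 0$. Because $\mathrm{char}\,K = 0$, $P$ is separable in $y$ over $K(x)$; specializing $x \mapsto c$ via the isomorphism $K(x) \to K(c)$ from~\eqref{uniquesmpltrans} (available because $c$ is transcendental over $K$) shows that $P(c, y)$ is also irreducible and separable over $K(c)$, so $u(c)$ is a simple root of $P(c, y)$. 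Thus $\partial P / \partial y(c, u(c)) \neq 0$, and the implicit function theorem furnishes a unique continuous real-valued branch $v$ of $\{P = 0\}$ defined on an open neighborhood of $c$ with $v(c) = u(c)$. Continuity of $u$ then forces $u = v$ on some open interval $J \ni c$ contained in $I$, so $P(x, u(x)) = 0$ for every $x \in J$.

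Next I would transfer the hypothesis about $u(c)$ into a statement about the splitting field of $P(x,y)$ over $K(x)$. The isomorphism $K(c) \cong K(x)$ identifies the splitting field $L_c$ of $P(c,y)$ over $K(c)$ with the splitting field $L$ of $P(x,y)$ over $K(x)$, and in particular $[L_c : K(c)] = [L : K(x)]$. By~\eqref{bdWThrT} the non-constructibility of $u(c)$ from $a_1,\dots,a_m,c$ is equivalent to $u(c) \notin \qclo{K(c)}$, and since $u(c)$ is a root of $P(c,y)$ and hence lies in $L_c$, Proposition~\ref{szrk}\eqref{szrkc} forces $[L : K(x)] = [L_c : K(c)]$ to be a positive integer that is \emph{not} a power of $2$.

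Finally I would invoke the Galois-group-preserving form of Hilbert's irreducibility theorem: since $\mathbb{Q}$ is Hilbertian and $K$ is finitely generated over $\mathbb{Q}$, there is a thin set $E \subseteq \mathbb{Q}$ (in Serre's sense) such that for every $c' \in \mathbb{Q} \setminus E$ the polynomial $P(c', y)$ is irreducible in $K[y]$ and the Galois group of its splitting field over $K$ is isomorphic to $\mathrm{Gal}(L/K(x))$. Since thin subsets of $\mathbb{Q}$ contain no real interval, one may choose $c' \in J \cap (\mathbb{Q} \setminus E)$. Then $P(c', u(c')) = 0$, so $u(c')$ is a root of $P(c', y)$; as $c' \in \mathbb{Q}$ gives $K(c') = K$, and the splitting field of $P(c', y)$ over $K$ has non-power-of-$2$ degree, Proposition~\ref{szrk}\eqref{szrkc} together with~\eqref{bdWThrT} yields that $u(c')$ is not constructible from $a_1,\dots,a_m,c'$, completing the proof.

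The main obstacle is the last step: plain HIT only guarantees irreducibility of $P(c', y)$, which controls only the degree of $u(c')$ over $K$ and would be inadequate because a polynomial of power-of-$2$ degree can easily have a splitting field of much larger, non-power-of-$2$ degree. One really needs the stronger Galois-theoretic version that preserves the full Galois group under specialization, and one needs the specialization values to be rational, not merely elements of $K$; together these force the use of a refined form of Hilbert's theorem whose thin exceptional set avoids no open real interval. A minor technical point is the use of separability and the implicit function theorem in the very first step to convert the pointwise equation $W(c, u(c)) = 0$ into an open condition $P(x, u(x)) = 0$ on an interval around $c$; without this, one could not guarantee $P(c', u(c')) = 0$ for the rational specialization $c'$.
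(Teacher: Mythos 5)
Your outline tracks the paper's strategy at the start: factor $W$, isolate the irreducible factor $P$ through $(c,u(c))$, and pass to a subinterval where $P(x,u(x))=0$. (A small remark: the implicit function theorem is a detour here. Once you know the other factors satisfy $P_i(c,u(c))\ne 0$ — which, like the paper, you must argue via pairwise non-associatedness — continuity of $u$ and the $P_i$ already gives $P(x,u(x))=0$ on an open subinterval, without invoking IFT or uniqueness of branches.) Where you genuinely diverge from the paper is in the last phase, and that is also where a gap appears.

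On the Galois-theoretic side, you invoke the Galois-group-preserving form of Hilbert's irreducibility theorem (the specializations for which the Galois group drops form a thin set). The paper instead encodes the splitting-field data of $P(x,y)$ over $K(x)$ into an auxiliary irreducible polynomial $h$, namely the minimal polynomial of a primitive element, via Lemma~\ref{dckTg}, and then applies only the plain irreducibility form of HIT (Proposition~\ref{hilprop}) simultaneously to $\wh W$ and $h$, using the identities \eqref{dzHhWpva}--\eqref{dzHhWpvb}, which survive specialization by \eqref{prthMrpH}, to recover the splitting-field degree after specializing. Both routes are legitimate; yours trades the bookkeeping of Lemma~\ref{dckTg} for a stronger but standard form of HIT.

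The gap is in producing the rational specialization. You need a rational $c'$ \emph{inside} the shrunk interval $J$ avoiding the thin exceptional set $E$, and you justify its existence by the remark that ``thin subsets of $\mathbb Q$ contain no real interval.'' That statement is vacuously true — no subset of $\mathbb Q$ contains a real interval — and it gives nothing. What you actually need is that $\mathbb Q\setminus E$ is dense in $\mathbb R$, i.e.\ meets every open interval. This is true, but it is not a formal consequence of the definition of thinness; one proves it by a change of variable: pick a non-constant rational map (e.g.\ a M\"obius transformation) carrying all large integers into $J\cap\mathbb Q$, pull $E$ back along it to another thin set, and apply the \emph{integer} form of HIT. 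This is precisely what the paper's substitution $x\mapsto 1/t$ accomplishes: it transports the problem to a neighbourhood of infinity, where \eqref{infhilbert} supplies infinitely many integer specializations $d'$, all but finitely many of which satisfy $|d'|>1/\delta$, and then $c'=1/d'$ lands back in $I$. Without this step (or an explicit citation of the density fact with a correct proof), your argument does not actually produce a $c'$ in the required interval, and the conclusion does not follow.
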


In many cases, the continuity of $u$ follows from geometric reasons. Our proof needs that $u$ is continuous.
 Since the continuity of $u$ does not follow if we define it implicitly by the polynomial equation $W(x,u(x))=0$, 
we  stipulate it separately in the theorem above.
Although Hilbert's irreducibility theorem, see later, is closely connected with  Galois groups,  we need the following lemma; this lemma will allow us to \emph{simultaneously} deal with the irreducibility of a polynomial and the order of the corresponding splitting field extension, which is the same as the order of the corresponding Galois group.  
The following lemma is an easy reformulation of Proposition~\ref{szrk}\eqref{szrkc}. Note that $h$ and the other polynomials in \eqref{dzHhWpva} and \eqref{dzHhWpvb} below are not uniquely determined in general.

\begin{lemma}\label{dckTg}
 For a subfield $L$ of $\mathbb R$, let $g(y)=b_ky^k+b_{k-1}y^{k-1}+\cdots + b_0 \in L[y]$ be an \emph{irreducible}  polynomial of degree $k\in\mathbb N$. Then the following two assertions hold.
\begin{enumeratei}
\item\label{dckTga} There are polynomials $g_0\in L[y_1,\dots,y_k]$, $g_1,\dots, g_k\in L[y]$, 
$h_1\in L[y,x]$, $h_2\in L[y]$, 
and an \emph{irreducible} polynomial $h\in L[y]$ such that 
\begin{align}
g(x) - b_k\cdot \bigl(x-g_1(y)\bigr)\cdots \bigl(x-g_k(y)\bigr)  &=  h(y)h_1(y,x) \,\, \text{ holds in } \,\, L[y,x]\label{dzHhWpva}\\
\text{and }\,\, g_0\bigl( g_1(y),\dots, g_k(y)\bigr)-  y  &= h(y)h_2(y) \,\, \text{ holds in } \,\, L[y]\text{.}\label{dzHhWpvb}
\end{align}
\item\label{dckTgb} No matter how $h, h_1, h_2, g_0, g_1,\dots, g_k$ are chosen in part \eqref{dckTga}, a root of $g$ is constructible over $L$ iff all roots of $g$ are constructible over $L$ iff the degree of $h$ is a power of $\,2$.
\end{enumeratei}
\end{lemma}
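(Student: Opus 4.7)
The plan is to prove Part \eqref{dckTga} via the primitive element theorem applied to the splitting field of $g$, and to deduce Part \eqref{dckTgb} by showing that any irreducible $h$ satisfying \eqref{dzHhWpva}--\eqref{dzHhWpvb} generates, through a single root, the entire splitting field of $g$ over $L$; Proposition~\ref{szrk}\eqref{szrkc} then supplies the degree criterion.

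For Part \eqref{dckTga}, I would work inside a fixed algebraic closure of $L$ and let $\alpha_1, \dots, \alpha_k$ denote the roots of $g$ there; set $M := L(\alpha_1, \dots, \alpha_k)$. Since $\operatorname{char}(L)=0$, the extension $M/L$ is separable, so the primitive element theorem produces $\beta \in M$ with $M = L(\beta)$, and a standard refinement permits $\beta$ to be chosen as an $L$-linear combination $\beta = c_1 \alpha_1 + \dots + c_k \alpha_k$. Take $h \in L[y]$ to be the minimal polynomial of $\beta$ over $L$ (irreducible by definition), set $g_0(y_1, \dots, y_k) := c_1 y_1 + \dots + c_k y_k$, and, using $\alpha_i \in L(\beta) = L[\beta]$, choose $g_i \in L[y]$ with $\alpha_i = g_i(\beta)$. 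Viewing $g(x) - b_k \prod_{i=1}^k \bigl(x - g_i(y)\bigr)$ as a polynomial in $x$ with coefficients in $L[y]$, each such coefficient vanishes upon substituting $y = \beta$, because this substitution turns the whole expression into $g(x) - b_k \prod_i (x - \alpha_i) = 0$ in $M[x]$; hence each coefficient is divisible by $h$ in $L[y]$, producing $h_1$ and establishing \eqref{dzHhWpva}. The analogous observation that $g_0(g_1(y), \dots, g_k(y)) - y \in L[y]$ vanishes at $y = \beta$ produces $h_2$ and gives \eqref{dzHhWpvb}.

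For Part \eqref{dckTgb}, fix any valid tuple $(h, h_1, h_2, g_0, g_1, \dots, g_k)$ satisfying \eqref{dzHhWpva} and \eqref{dzHhWpvb} with $h$ irreducible, and pick a root $\gamma$ of $h$ in an algebraic closure of $L$. Substituting $y = \gamma$ into \eqref{dzHhWpva} yields $g(x) = b_k \prod_i \bigl(x - g_i(\gamma)\bigr)$ in $L(\gamma)[x]$, so $g$ splits over $L(\gamma)$ with roots $g_1(\gamma), \dots, g_k(\gamma)$; hence the splitting field of $g$ over $L$ is contained in $L(\gamma)$. Substituting $y = \gamma$ into \eqref{dzHhWpvb} gives $\gamma = g_0(g_1(\gamma), \dots, g_k(\gamma))$, placing $\gamma$ inside that splitting field and yielding the reverse containment. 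Thus $L(\gamma)$ \emph{equals} the splitting field of $g$ over $L$, and $[L(\gamma):L] = \deg h$ by irreducibility of $h$. Proposition~\ref{szrk}\eqref{szrkc}, applied to any real root of $g$ (all of which live inside $L(\gamma)$), shows that constructibility over $L$ is equivalent to $\deg h$ being a power of $2$; and because every root of $g$ sits inside the same field $L(\gamma)$, the three conditions---one root constructible, all roots constructible, and $\deg h$ a power of $2$---are mutually equivalent.

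The only subtle point I anticipate is the choice-independence built into Part \eqref{dckTgb}: the tuples from Part \eqref{dckTga} are highly non-unique (different primitive elements and different polynomial representatives lead to different $h$, $h_i$, and $g_i$), yet the equivalence must hold for \emph{every} admissible choice. The argument above side-steps this automatically, since it uses only \eqref{dzHhWpva}, \eqref{dzHhWpvb}, and the irreducibility of $h$, forcing $[L(\gamma):L] = \deg h$ to match the intrinsic splitting-field degree of $g$ over $L$ in every case.
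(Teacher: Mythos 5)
Your proposal is correct and follows essentially the same route as the paper: in Part~\eqref{dckTga} you invoke the primitive element theorem on the splitting field of $g$ to produce $\beta$ and take $h$ to be its minimal polynomial, then deduce the two divisibilities by evaluating at $\beta$; in Part~\eqref{dckTgb} you show any root $\gamma$ of any admissible $h$ generates exactly the splitting field of $g$ over $L$, so $\deg h$ equals its degree and Proposition~\ref{szrk}\eqref{szrkc} applies. The only cosmetic differences are that the paper phrases the divisibility step via the kernel of the evaluation homomorphism $L[y,x]\to L(\beta)[x]$ rather than coefficient-by-coefficient, and works with the abstract quotient $L[y]/(h)$ in Part~\eqref{dckTgb} where you pick a concrete root $\gamma$; neither changes the substance.
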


\begin{proof} First, we prove part \eqref{dckTga}.
Let $F$ denote the  splitting field of $g$ over $L$.  Then  there are $\alpha_1,\dots, \alpha_k\in F$, the roots of $g$, such that 
\begin{equation}
g(x)=b_k\cdot (x-\alpha_1)\dots(x-\alpha_k)\,\text{ in }F[x] \text{ and }F=L(\alpha_1,\dots,\alpha_k)\text.
\label{dlsZTZnggbfbW}
\end{equation}
Since the degree $[F:L]$ of the field extension is finite and $L$ is of characteristic 0, $F$ is a simple algebraic extension of $L$; see, for example, Dummit and Foote~\cite[Theorem 14.25 in page 595]{dummitfoote}.
Thus, there exists a $\beta\in F$ such that $F=L(\beta)$. Let $h\in L[y]$ be the minimal polynomial of $\beta$. Clearly, $h(y)$ is an irreducible polynomial. 
Using \eqref{dlsZTZnggbfbW} and $F=L(\beta)$,  we obtain polynomials $g_i$ over $L$ such that $\beta=g_0(\alpha_1,\dots,\alpha_k)$ and $\alpha_j=g_j(\beta)$ for $j\in\set{1,\dots,k}$. By the definition of these polynomials and  \eqref{dlsZTZnggbfbW}, 
\begin{align}
g_0(g_1(\beta),\dots, g_k(\beta)) -\beta&=0\,\text{ in } F\,\text{ and} \label{dlsrPtRwBYxa}
\\
g(x)-b_k\cdot (x-g_1(\beta))\dots(x-g_k(\beta))&=0\,\text{ in }F[x]\text.
\label{dlsrPtRwBYxb}
\end{align}
We know that  $F=L(\beta)\cong L[y]/(h(y))$ in the usual way, where $\beta$ corresponds to $y+\bigl(h(y)\bigr)$; see, for example, Dummit and Foote~\cite[pages 512--513]{dummitfoote}. Hence,   there is a unique surjective ring homomorphism $\phi\colon L[y]\to F$, defined by the rule $\phi(u(y))\mapsto u(\beta)$. The kernel of this homomorphism is  
\begin{equation}
\Ker(\phi)=\bigl(h(y)\bigr),\,\text{where  the principal ideal is understood in }\,L[y]\text.
\label{kerphiskdB}
\end{equation}
 Note that the restriction $\restrict \phi L$ of $\phi$ to $L$ is the identity map $\idmap L$. We can extend $\phi\cup\set{\pair xx}$ to a unique surjective ring homomorphism $\psi\colon L[y,x]=L[y][x]\to F[x]$. We claim that 
\begin{equation}  
\Ker(\psi)=\bigl(h(y)\bigr),\,\text{  understood in }\,L[y,x]\text.
\label{slHkjbmXq}
\end{equation}
To verify this, let  $u(y,x)=\sum_{i=0}^s u_i(y)x^i\in L[y,x]$ and compute:
\begin{equation*}
\begin{aligned}
\psi\bigl(u(y,x)\bigr)&=
\psi\bigl(\sum_{i=0}^s u_i(y)x^i\bigr) = \sum_{i=0}^s \phi\bigl(u_i(y)\bigr) x^i = 0\,\,\text{ in }\, F[x] \cr
&\iff (\forall i)\,  \phi\bigl(u_i(y) \bigr) =0   \overset{\eqref{kerphiskdB}}\iff (\forall i)\,  h(y)\mid u_i(y) \, \text{ in }\, L[y]\cr
&\iff h(y)\mid u(y,x)  \, \text{ in }\, L[y,x],\, \text{ as required.}
\end{aligned}
\end{equation*}
Using that   $\restrict\psi L=\restrict\phi L=\idmap L$, we obtain that $\psi\bigl(g(x)\bigr) = g(x)$. Since $\psi$ is a ring homomorphism and $\psi\bigl(x-g_i(y))=\psi(x)-\psi\bigl(g_i(y)\bigr)=x - \phi\bigl(g_i(y)\bigr) = x- g_i(\beta)$, 
\begin{align*}
\psi\bigl(g(x) &- b_k \cdot (x-g_1(y))\dots(x-g_k(y))  \bigr)
\cr
&=g(x)- b_k\cdot (x-g_1(\beta))\dots(x-g_k(\beta))
\overset{\eqref{dlsrPtRwBYxb}}= 0 \text .
\end{align*}
Combining this with \eqref{slHkjbmXq}, we obtain a polynomial $h_1(y,x)\in L[y,x]$ such that \eqref{dzHhWpva} holds. Similarly,  the definition of $\phi$ gives that
\[
\phi\bigl( g_0 ( g_1(y),\dots, g_k(y) ) -y\bigr)=
 g_0 ( g_1(\beta),\dots, g_k(\beta) ) -\beta
\overset{\eqref{dlsrPtRwBYxa}}= 0\text.
\]
Hence,  \eqref{kerphiskdB} yields a polynomial $h_2\in L[y]$ such that   \eqref{dzHhWpvb} holds.   
This proves part \eqref{dckTga} of Lemma~\ref{dckTg}.

Second,   in order to prove part \eqref{dckTgb},  assume that we have polynomials satisfying  the conditions in part \eqref{dckTga}, including  \eqref{dzHhWpva}  and \eqref{dzHhWpvb}. 
Since $h$ is irreducible, $F:=L[y]/ \bigl(h(y))\bigr)$ is a field.  Let $\beta:=y+\bigl(h(y)\bigr)$, which generates $L[y]/ \bigl(h(y)\bigr)$, and let $\alpha_i=g_i(\beta)=g_i(y) + \bigl(h(y)\bigr) \in  L[y]/ \bigl(h(y)\bigr)$, for $i\in\set{1,\dots,k}$. Since $h(\beta)=0$ and $h(y)$ is irreducible, it follows that $h$ is the minimal polynomial of $\beta$. 
Substituting $\beta$ for $y$ and using that $h(\beta)=0$, 
\eqref{dzHhWpva} shows that $g(x)=b_k\cdot  (x-\alpha_1)\cdots(x-\alpha_k)$ in $F[x]$. 
Hence, $F$  includes the splitting field $L(\alpha_1,\dots,\alpha_k)$  of $g$ over $L$. On the other hand, the same substitution into \eqref{dzHhWpvb} shows that $\beta=g_0(\alpha_1,\dots,\alpha_k)$ belongs to the splitting field. Hence,  $F=L(\beta)=  L(\alpha_1,\dots,\alpha_k)$ is (isomorphic to) the splitting field  of $g$. It is well-known that   $[F:L]=[L(\beta):L] =\deg h$; see, for example, Dummit and Foote~\cite[Theorem 13.4 in page 513]{dummitfoote}. Thus, 
part \eqref{dckTgb}  follows from  Proposition~\ref{szrk}\eqref{szrkc}. 
\end{proof}

Next, we recall   Hilbert's irreducibility theorem, \cite{hilbert}, from  
 Fried and Jarden~\cite[page 219 and Proposition 13.4.1 in page 242]{friedjarden}, or  \cite[Theorem 13.3.5 in page 241]{friedjarden} in a particular form we need it later; see also \cite{hilbertwiki} for a short introduction.

\begin{proposition}[Hilbert's irreducibility theorem]\label{hilprop}
 Let $K\subseteq \mathbb R$ be a field, and let $T=\tuple{T_1,\dots,T_s}$ and $Y=\tuple{Y_1,\dots, Y_k}$ be two systems of variables. Let $f_1(T,Y)$, \dots, $f_m(T,Y)$ be irreducible polynomials in $Y$ with coefficients in $K(T)$. That is, $f_1,\dots,f_m$ are irreducible in $K(T)[Y]$. Then there exists a system $a=\tuple{a_1,\dots, a_s}\in \mathbb Z^s$ of integers such that $f_i(a,Y)$ is defined and it is irreducible in $K[Y]$ for all $i\in\set{1,\dots, m}$.
\end{proposition}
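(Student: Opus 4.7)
The plan is to reduce the theorem to the classical case of a single irreducible polynomial $f(T,Y)\in\mathbb Q[T,Y]$ with $T$ and $Y$ single variables, and then handle that case by a density argument using the Puiseux expansions of the roots of $f(T,Y)=0$ near infinity.

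The reductions I would carry out are standard, but worth itemizing. To reduce the multi-parameter case $s>1$ to $s=1$, I would proceed by induction on $s$: applying the $s=1$ theorem to $T_s$ (with $T_1,\dots,T_{s-1}$ absorbed into the coefficient field) yields a specialization $T_s\mapsto a_s\in\mathbb Z$ preserving the irreducibility of each $f_i$, and the inductive hypothesis then handles the remaining parameters. To reduce the multi-indeterminate case $k>1$ to $k=1$, I would view each $f_i$ as a polynomial in $Y_1$ over the field $K(T)(Y_2,\dots,Y_k)$; by Gauss's lemma, irreducibility in $K(T)[Y_1,\dots,Y_k]$ transfers to irreducibility in $K(T)(Y_2,\dots,Y_k)[Y_1]$. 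To collapse the finite list $f_1,\dots,f_m$ to a single polynomial, I would show that the ``bad'' specializations for each individual $f_i$ form a set of density zero in $\mathbb Z$, so that the finite union of such sparse sets still misses almost every integer. Finally, the coefficient-field reduction from $K\subseteq\mathbb R$ down to $\mathbb Q$ is carried out by norms: the coefficients of $f_i$ generate a finite separable extension $L$ of $\mathbb Q(T)$, and the norm $N_{L/\mathbb Q(T)}(f_i)\in\mathbb Q(T)[Y]$ has a factorization pattern that controls the one of $f_i$.

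The heart of the argument is then the following density statement: for an irreducible $f(T,Y)\in\mathbb Z[T,Y]$ of degree $n\geq 2$ in $Y$, the set $B_N=\{a\in\mathbb Z:|a|\leq N,\ f(a,Y)\text{ reducible in }\mathbb Q[Y]\}$ satisfies $|B_N|=o(N)$. By Puiseux's theorem (Theorem~\ref{puiseux}), the roots $y_1(T),\dots,y_n(T)$ of $f(T,Y)=0$ admit Puiseux expansions in $T^{-1/e}$ that converge for $|T|$ large. A reducibility of $f(a,Y)$ over $\mathbb Q$ corresponds to a non-trivial proper subset $S\subsetneq\{1,\dots,n\}$ for which every coefficient of $\prod_{i\in S}(Y-y_i(T))$, evaluated at $T=a$, is rational. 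By the irreducibility of $f$ and transitivity of the Galois action of $\mathrm{Gal}(\overline{\mathbb Q(T)}/\mathbb Q(T))$ on the roots, for each such $S$ at least one such coefficient $\phi_S(T)$ is an algebraic function over $\mathbb Q(T)$ that is \emph{not} itself a rational function of $T$. I would then bound, for each genuinely algebraic $\phi_S$, the number of $a\in[-N,N]$ with $\phi_S(a)\in\mathbb Q$: the Puiseux data give a Diophantine separation between $\phi_S(a)$ and its Galois conjugates, while the height of any rational value $\phi_S(a)=p/q$ grows only polynomially in $|a|$, so a pigeonhole on admissible numerators and denominators yields $O(N^{1-\delta})$ such $a$. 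Summing over the finitely many subsets $S$ gives the required density-zero bound.

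The main obstacle is precisely this last quantitative step. Producing a truly non-rational algebraic $\phi_S$ requires exploiting the full force of the irreducibility of $f$ through the Galois theory of the splitting field of $f$ over $\mathbb Q(T)$, and the Diophantine upper bound on the number of integer specializations at which $\phi_S$ takes a rational value rests on effective versions of the Puiseux convergence together with a gap principle that separates the distinct Puiseux branches at large integer arguments. These are exactly the ingredients in the classical proofs of Hilbert's theorem, and essentially all of the number-theoretic content of the result is concentrated here; everything else in the argument is bookkeeping.
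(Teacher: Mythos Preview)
The paper does not prove Proposition~\ref{hilprop} at all: it merely \emph{recalls} Hilbert's irreducibility theorem from Fried and Jarden~\cite{friedjarden} (and the original source~\cite{hilbert}) as a known result, in the form needed for the proof of Theorem~\ref{ratiothm}. So there is nothing to compare your argument against; you have attempted to prove something the authors deliberately take as a black box.

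That said, your sketch follows the classical route (reduction to $s=k=m=1$ over $\mathbb Q$, then a density bound on bad specializations via Puiseux expansions near infinity). The reductions are essentially correct in outline, though the norm-based descent from $K$ to $\mathbb Q$ is more delicate than you indicate: for an arbitrary subfield $K\subseteq\mathbb R$ the coefficients of $f_i$ need not lie in a number field, so one first adjoins the finitely many coefficients to $\mathbb Q$ and then handles the resulting finitely generated extension (this is where the ``Hilbertian field'' machinery in~\cite{friedjarden} enters). Your final paragraph is honest about where the real work lies---the Diophantine gap argument counting integers $a$ with $\phi_S(a)\in\mathbb Q$ is indeed the entire content of the theorem---but as written it is a plan rather than a proof. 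For the purposes of this paper, citing the result is the appropriate choice.
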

The tuple $a=\tuple{a_1,\dots, a_s}$ above is called a common \emph{specialization} of the polynomials $f_i(T,Y)$. The transition from $f_i(T,Y)$ to $f(a,T)$ is called a \emph{substitution}, or the \emph{$T:=a$ substitution}. 
The statement above says that finitely many irreducible polynomials have a common integer specialization. This easily implies that
\begin{equation}
\parbox{11 cm} {The $f_i(T,Y)$ above have infinitely many common integer specializations.}
\label{infhilbert}
\end{equation}
To see this, suppose, for a contradiction, that there are only are finitely many common integer specializations, and all of them are in the list $a', a'', \dots$ .
Let $f':= Y_1^2- a'_1T_1$; then $f'(a',Y)$ is reducible. Similarly, if $f'':= Y_1^2- a''_1T_1$, then $f''(a'',Y)$ is reducible, etc. . Hence, applying Proposition~\ref{hilprop} to $f_1,\dots,f_m, f',f'',\dots$, we obtain a new common integer specialization of $f_1,\dots, f_m$. This is a contradiction, which proves \eqref{infhilbert}.

We will also need the following trivial fact, which says that, even if they are  not defined everywhere, substitutions  are partially defined ring homomorphism:
\begin{equation}
\parbox{10 cm}
{For $f(T,Y), g(T,Y)\in K(T)[Y]$ and a specialization  $a\in \mathbb Z^s$, if the $T:=a$ substitution is defined for  $f(T,Y)$ and $g(T,Y)$, then it is also defined for and commutes with their sum and product.
}
\label{prthMrpH}
\end{equation}

\begin{proof}[Proof of Theorem~\ref{ratiothm}] Since $K[x,y]=K[x][y]\subseteq K(x)[y]$, we have that $W\in K(x)[y]$. 
We can assume that $W$ is  an irreducible polynomial  in $ K(x)[y]$. 
Suppose that this is not the case. Then there are a finite index set $J$,  pairwise non-associated irreducible polynomials  $W_j\in  K(x)[y]$, and $\alpha_j\in \mathbb N$ such that  $W(x,y) = \prod_{j\in J} W_j(x,y)^{\alpha_j}$. Using the isomorphism $K(x)\cong K(c)$, see \eqref{uniquesmpltrans}, we obtain that the $W_j(c,y)\in K(c)[y]$ are also irreducible.
From $W(c,u(c))=0$, we obtain that $u(c)$ is a root of some $W_j(c,y)$. Since non-associated irreducible polynomials cannot have a common root, there is exactly one $j\in J$ such that $W_j(c,u(c))=0$. Let $M:=\min\bigl\{|W_i(c,u(c))|: i\in J\setminus\set j  \bigr\}$. By the uniqueness of $j$, $M>0$. 
The roots of the $W_i(c,y)$ depend continuously on the parameter $c$, and  $u$ is a continuous function. Hence,  there is a small neighborhood of $c$ such that for all $c'$ in this neighborhood and for all $i\in J\setminus\set j$, we have that $|W_i(c',u(c'))|>M/2$. However, $0=W(c',u(c'))=\prod_{i\in J} W_i(c',u(c'))^{\alpha_i}$ for all  $c'\in I$. Thus, $W_j(c',u(c'))=0$ for all $c'$ in a small neighborhood of $c$. Therefore, after replacing $I$ by this small neighborhood,  $W$ can be replaced by $W_j$   in our considerations; this justifies the assumption that  $W$ is irreducible in $ K(x)[y]$. Also, $W(c,y)$ is irreducible in $K(c)[y]$.  Actually, we can assume even more. If we multiply (or divide) it by an appropriate polynomial from $K[x]$ if necessary,  $W(x,y)$ becomes an irreducible polynomial in $K[x][y]$ by  Gauss' Lemma; see Dummit and Foote~\cite[Proposition 9.5 and Corollary 9.6 in pages 303--304]{dummitfoote}. So, we assume  $W(x,y)$  
is \emph{irreducible}  in $K[x][y]=K[x,y]$  and also in $K(x)[y]$. 
We  write $W(x,y)$ in the form 
\begin{equation}
W(x,y)=a_k(x)y^k + a_{k-1}(x)y^{k-1}+\cdots+a_0(x), \text{ where } a_k,\dots, a_0\in  K[x],
\label{sdzWTgnb}  
\end{equation}
$a_k(x)\neq 0$ and, since $W$ is irreducible in $K[x][y]$, $a_0(x)\neq 0$. Note that $a_k(c)\neq 0$ and $a_0(c)\neq 0$, since $c$ is transcendental over $K$.  
After shrinking the interval $I$ if necessary, we can assume that 
\begin{equation}
\text{for all }r\in I,\text{ }a_k(r)\neq 0 \text{ and } a_0(r)\neq 0\text.
\end{equation}

We are looking for an appropriate rational number $c'$ within $I$, that is, near $c$. However, \eqref{infhilbert} can only give some very large $c'\in \mathbb Z$, which need not belong to $I$. To remedy this problem, we are going to translate the constructibility problem of $u(x)$ to an equivalent problem that is easier to deal with. 
To do so, we can assume that the open interval $I$ is determined by two rational numbers, because otherwise we can take a smaller interval that still contains $c$. Let $q$ be the middle point of $I$; it is a rational number, and $I$ is of the form $I=(q-\delta,q+\delta)$, where $0<\delta\in\mathbb Q$. 
We can assume that $q=0$, because otherwise we can work with 
\[\tuple{u^\ast(x):=u(x+q),\, W^\ast(x,y):=W(x+q,y),\, c^\ast:=c-q }
\] 
instead of $\tuple{ u(x), W(x,y), c}$; to justify this, observe that $c^\ast$ is still transcendental over $K$. Note also that, by   the uniqueness of simple transcendental field extensions,  see \eqref{uniquesmpltrans}, $\id_K \cup \set{\pair x{x+q}}$ extends first to an automorphism of $K(x)$, and then to an automorphism of $K(x)[y]$ that maps $W$ to $W^\ast$, and we conclude that   $W^\ast$ is irreducible over $K(x)$. We can also assume that $W^\ast$ is irreducible over $K[x]$, because otherwise we can divide it by the greatest common divisor of its coefficients, which belongs to $K[x]\setminus\set 0$, and Gauss' Lemma applies.
Finally, for every $x\in\mathbb R$,  $x+q$ is constructible from $x$ and vice versa; so they are equivalent data modulo geometric constructibility.

So, from now on, $I=(-\delta, \delta)$. 
The punctured interval $I \setminus\set 0 =(-\delta,0)\cup(0,\delta)$ will 
be denoted by $\pnct I$. We also consider another open set,  
 \[J:= \set{r\in\mathbb R: |r|>1/\delta} =( -\infty,-1/\delta) \cup (1/\delta,\infty)
\text.
\] 
The functions $\tau\colon \pnct I\to J$, defined by $\tau(x):= 1/x$, and $\xi: J\to\pnct I$, defined by $\xi(t):= 1/t$, are reciprocal bijections and both are continuous on their domains. To emphasize that $\xi$ and $\tau$ have disjoint domains, we often write  $\xi(t)$ and $\tau(x)$ instead of $1/t$ and $1/x$, respectively. 
The compound function, $v\colon J\to \mathbb R$, defined by $v(t)=u(\xi(t))$ is also continuous. 
 Since $\tau(x)$ is constructible from $x$ and $\xi(t)$ is constructible from $t$, we obtain that
\begin{equation}
\parbox{9.5 cm}
{The constructibility of $u(x)$ from $x\in \pnct I$ over $K$ is equivalent to the constructibility of $v(t)= u(\xi(t))$ from $t\in J$ over $K$.}
\label{consteQu}
\end{equation} 
Of course, ``from $x\in\pnct I$ over $K$'' is equivalent to ``from $x\in\pnct I$ and $a_1,\dots, a_m$'', and a similar comment applies for analogous situations. 
Using that  both $\xi$ and $\tau$ map rational numbers to rational numbers, we conclude from \eqref{consteQu} that 
\begin{equation}
\parbox{6.5cm}{It suffices to find a $d'\in J\cap\mathbb Q$ such that $v(d')$ is not constructible from $a_1,\dots, a_m$.} 
\label{dhgmPcTg}
\end{equation}
Let $d=\tau(c)=1/c$; it is also transcendental over $K$. We obtain from  \eqref{consteQu} that  $v(d)$ is not constructible from $d$ over $K$. 
For $i\in \set{0,\dots, k}$, we define  $\wt b_i(t):=a_i(\xi(t))=a_i(1/t)\in K(t)$. Clearly, for a sufficiently large $j\in\mathbb N$, we obtain that  $t^j\cdot\wt b_0(t)$,  \dots, $t^j\cdot \wt b_0(t) \in K[t]$, and we can take out the greatest common divisor from these polynomials. Hence, there is a polynomial $q(t)\in K[t]$ such that  $\wh b_0(t):=q(t)\wt b_0(t)\in K[t]$, \dots, $\wh b_k(t):=q(t)\wt b_k(t)\in K[t]$, and the greatest common divisor of  $\wh b_0(t)$, \dots,  $\wh b_k(t)$    is $1$. Observe that $1/t\in K(t)$ is a transcendental element over $K$ and $K(t)=K(1/t)$. Thus, by the uniqueness of simple transcendental field extensions, see \eqref{uniquesmpltrans}, $\id_K\cup\set{\pair {x}{1/t}}$ extends to an isomorphism of $K(x)\to K(t)$, and  also to an isomorphism $K(x)[y]\to K(t)[y]$ such that $y\mapsto y$. 
This isomorphism maps $a_i(x)$ to $a_i(1/t)=\wt b_i(t)$. Using this isomorphism and the fact that  \eqref{sdzWTgnb} is an irreducible polynomial over $K(x)$, we obtain that 
 $\wt b_k(t)y^k+\cdots \wt b_0(t) \in K(t)[y]$ is  irreducible over $K(t)$. Multiplying this polynomial by $q(t)$, we obtain that
\begin{equation}
\wh W(t,y):= \wh b_k(t)y^k +\wh b_{k-1}(t)y^{k-1}+\cdots +\wh b_0(t) \in K[t][y]
\label{whWty}
\end{equation}
is a polynomial that is irreducible in $y$ over  $K[t]$ and also over $K(t)$. 
For $t\in J$, let us compute:
\begin{equation}
\begin{aligned}
\wh W(t,v(t)) &= \wh b_k(t)\cdot v(t)^k + \wh b_{k-1}(t) \cdot v(t)^{k-1}  +\cdots + \wh b_0(t)\cr
 &= q(t)\bigl(  a_k(\xi(t))\cdot  u(\xi(t))^k+  \cdots+  a_0(\xi(t)) \bigr)  \cr
&=q(t)\cdot W\bigl( \xi(t), u(\xi(t))\bigr) = 0,
\end{aligned}
\label{dhJdqGpT}
\end{equation}
because $\xi(t)\in \pnct I$.
Therefore, $v(t)$, whose constructibility from $t$ over $K$ is investigated, is the root of the irreducible polynomial $\wh W(t,y)\in K[t][y]$ in the sense that $\wh W(t,v(t))=0$ for all $t\in J$, and we know that $v(d)$ is not constructible from the transcendental number $d\in J$ over $K$.

Let $L=K(d)$. Based on \eqref{whWty}, we let 
\[\jelz gd(y)=\wh b_k(d)y^k +\wh b_{k-1}(d)y^{k-1}+\cdots+ \wh b_0(d) \in L[y]\text.
\] 
Note that $\jelz gd(y)$ is obtained from  $\wh W(t,y)$ by the $t:=d$ substitution.
Since the polynomial in \eqref{whWty} is irreducible in $y$ over $K(t)$ and $L=K(d)\cong K(t)$, we conclude that $\jelz gd$ is irreducible   in $L[y]$. Since $d$ is transcendental over $K$, we can substitute $d$ for $t$ in \eqref{dhJdqGpT}. Note that $\wh W$ is composed from polynomials and  $\xi(t)=1/t$, so 
\begin{equation}
\text{any substitution for $t$ in \eqref{dhJdqGpT} makes sense except for $t:=0$.}
\label{anysuBst}
\end{equation}
In this way, we obtain that $v(d)$ is a root of $\jelz gd$. By Lemma~\ref{dckTg}, there is an irreducible polynomial $\jelz hd(y)\in L[y]$ and there are further  polynomials $\jelz gd_0\in L[y_1,\dots, y_k]$, $\jelz gd_1,\dots, \jelz gd_k, \jelz hd_2\in L[y]$, and  $\jelz hd_1\in L[y,x]$  such that   \eqref{dzHhWpva} and \eqref{dzHhWpvb}, with the superscript $(d)$ added, hold. Since $v(d)$ is not constructible over $L$, Lemma~\ref{dckTg} yields that  $\degx y{\jelz hd}$ is not a power of 2.

Using that $\id_K\cup\set{\pair dt}$ extends to a unique field isomorphism $\phi_0\colon L=K(d)\to K(t)$, which extends further to a unique ring isomorphism $\phi\colon L[y]\to K(t)[y]$ that maps $y$ to $y$. Let $\wh h(t, y)$ denote the $\phi$-image of $\jelz hd(y)$; we will also denote it by $\jelz ht(y)$.  
(Note that $\jelz hd(y)$   is obtained from  $\wh h(t,y)$ by the $t:=d$ substitution; however $d$ as a specialization is not so important, because we are only interested in \emph{integer} specializations.) 
Clearly, $\degx y{\jelz ht}=\degx y{\jelz hd}$, because $d$ is transcendental over $K$. Using that $\phi$ preserves irreducibility, we obtain that  $\jelz gt(y)$ and $\jelz ht(y)$ are irreducible polynomials in $ K(t)[y]$, since so are $\jelz gd(y)$ and $ \jelz hd(y)$ in  $L[y]$.
We define  $\jelz gt, \jelz gt_1,\dots,\jelz gt_k,\jelz ht_2\in K(t)[y]$  as the $\phi$-images of  $\jelz gd, \jelz gd_1,\dots, \jelz gd_k, \jelz hd_2\in L[y]$, respectively. 
Note at this point that    $\jelz gt(y)=\wh W(t,y)$; see \eqref{whWty}.  Hence, \eqref{dhJdqGpT} and \eqref{anysuBst} give that
\begin{equation}
\text{For every $d'\in J$, $v(d')$ is a root of $\jelz g{d'}(y):=\wh W(d',y)$.} 
\label{vrtttdPr}
\end{equation}
We also define $\jelz gt_0\in K(t)[y_1,\dots, y_k]$ as the image of $\jelz gd_0$ under the unique  extension of $\phi_0 \cup\set{\pair {y_1}{y_1},\dots,\pair {y_k}{y_k}}$ to a (unique) isomorphism $L[y_1,\dots, y_k]\to  K(t)[y_1,\dots, y_k]$. We define $\jelz ht_1\in K(t)[y,x]$ analogously.  In this way, we have defined a ``$(t)$-superscripted variant'' over $K(t)$  of the system of polynomials occurring in Lemma~\ref{dckTg}. Since isomorphisms preserve \eqref{dzHhWpva} and \eqref{dzHhWpvb}, we conclude that 
\begin{equation}
\parbox{6cm}
{The $(t)$-superscripted family of our polynomials satisfy \eqref{dzHhWpva} and \eqref{dzHhWpvb}.}
\label{tSpCRhlds}
\end{equation}

Next, we apply  Proposition~\ref{hilprop} with $s=1$ for the irreducible polynomials $\jelz gt(y)=\wh W(t,y)$ and $\jelz ht(y)=\wh h(t,y)$. In this way,  taking \eqref{infhilbert} into account, we conclude that there are infinitely many specializations $d'\in \mathbb Z$ such that 
\begin{equation}
\parbox{7cm}
{$\jelz h{d'}(y):= \wh h(d',y)$ and $\jelz g{d'}(y)=\wh W(d',y)$ are defined and they are irreducible in $K[y]$.}
\label{dPzGrtjbnRqW}
\end{equation}   
We also specialize the polynomials $\jelz gt_0, \jelz gt_1,\dots,\jelz gt_k,\jelz ht_1,\jelz ht_2$  by $t:=d'$. Not every $d'$ satisfying \eqref{dPzGrtjbnRqW} is appropriate for this purpose, because the coefficients of these polynomials are fractions over $K[t]$ and some denominators may turn to zero if we substitute $d'$ for $t$. Fortunately, there are only finitely many denominators with finitely many roots, so we still have infinitely many $d'\in Z$ that specialize all these additional polynomials such that \eqref{dPzGrtjbnRqW} still holds. There are only finitely many of these $d'$ such that the leading coefficient of $\jelz ht$ diminishes  at the specializations $t:=d'$, and there are also finitely many $d'$ outside $J$. Thus, there exists a $d'\in J\cap \mathbb Z$ such that \eqref{dPzGrtjbnRqW} holds, all the ``$(t)$-superscripted'' polynomials can be specialized by $t:=d'$, and  $\degx y{\jelz h{d'}} = \degx y{\jelz h{t}}=\degx y{\jelz h{d}}$, which is not a power of $2$.    Combining  \eqref{prthMrpH}  and  \eqref{tSpCRhlds}, we obtain that the $(d')$-superscripted family of polynomials satisfy \eqref{dzHhWpva} and \eqref{dzHhWpvb}. Therefore, by Lemma~\ref{dckTg}\eqref{dckTgb}, no root of $\jelz g{d'}(y)\in K[y]$ is constructible over $K$. But $v(d')$ is a root of $\jelz g{d'}(y)$ by \eqref{vrtttdPr}, whence $v(d')$ is not constructible over $K$. That is, $v(d')$ is not constructible from $a_1,\dots, a_m$ that generate the field $K$. Thus,  \eqref{dhgmPcTg} completes the proof of Theorem~\ref{ratiothm}.
\end{proof}

\section{Completing the proof of Theorem~\ref{thmour}}\label{compLsect}

We only have to combine  some earlier statements.
\begin{proof}[Proof of Theorem~\ref{thmour}]
Parts \eqref{thmoura} and \eqref{thmourb} are proved  in Section~\ref{smallorder}. Part~\eqref{thmourc} is proved in Section~\ref{evensection} in an elementary way. 
For $n\geq 8$, Lemma~\ref{realtwolemma} proves that there exists an appropriate $\ell\in\set{7,9}$ and a positive transcendental number $c\in\mathbb R$ such that $P_n(c)=P(1, \dots,1,c,\dots,c)$ given in \eqref{dlZrGtn} exists but it is not constructible from its sides. In other words, $P_n(c)$ is geometrically not constructible  from $0$, $1$ and $c$.
 We conclude from \eqref{shcrnqltS}  that there exists a  small open interval $I$ such that $c\in I$ and $P_n(x)=P(1, \dots,1,x,\dots,x)$ exists for all $x\in I$.  Let $r_n(x)$ denote the radius of the circumscribed circle of $P_n(x)$.
Now, we are in the position to apply Theorem~\ref{ratiothm} with $m=2$, $a_1=0$, $a_2=1$, $u(x)=1/(2r_n(x))$,  and  $W(x,y):=\Wnone {\ell}{n-\ell}(1,x,y)$; see Lemma~\ref{LmMrZT}\eqref{LmMrZTc}. In this way, 
we can change  $c$ to an appropriate rational number $c'\in I$ such that $P_n(c')=P(1,\dots,1,c',\dots,c')$ still exists but it 
is not constructible from its sides, $1$ and $c'$. We can write $c'$ in the form $c_1'/c_2'$ where $c_1', c_2'\in \mathbb N$.  Clearly,  $P(c_2',\dots,c_2',c_1',\dots,c_1')$ is non-constructible, because it  is geometrically similar to $P_n(c')$. Therefore, $n\in\setncl 2$. This proves parts~\eqref{thmourc} and  \eqref{thmourd} of Theorem~\ref{thmour}. 
\end{proof}

\end{document}